\newcommand{\FS}{\operatorname{FS}}
\newcommand{\FP}{\operatorname{FP}}
\newcommand{\synd}{s}
\renewcommand{\Neur}{{\N_0}}
\theoremstyle{theorem}
\begin{document}
\title[Examples of complete sets]{New examples of complete sets, with connections to a Diophantine theorem of Furstenberg}

\authorvitaly\authordavid

\subjclass[2010]{Primary 11B13, 11J71}
\keywords{complete set, finite sum set, additive combinatorics}
\date{\today}

\begin{Abstract}
A set $A\subseteq\mathbb N$ is called \emph{complete} if every sufficiently large integer can be written as the sum of distinct elements of $A$. In this paper we present a new method for proving the completeness of a set, improving results of Cassels ('60), Zannier ('92), Burr, Erd{\doubleacute o}s, Graham, and Li ('96), and Hegyv\'ari ('00). We also introduce the somewhat philosophically related notion of a \emph{dispersing} set and refine a theorem of Furstenberg ('67).
\end{Abstract}
\maketitle

\section{Introduction}
For each $a,b\in\N = \{1,2,\ldots\}$ such that $a,b \geq 2$, let $\Gamma(a,b)$ denote the multiplicative semigroup generated by $a$ and $b$:
\begin{equation}
\label{Gammaabdef}
\Gamma(a,b) = a^\Neur b^\Neur = \{a^n b^m : n,m\in\Neur\},
\end{equation}
where $\Neur = \N\cup\{0\}$. This short note is dedicated to the refinement and generalization of two classical results which involve sets of the form $\Gamma(a,b)$. In order to formulate these results, we first need to introduce some notation and terminology.

\begin{definition}
For each set $A\subset\N$, we define the \emph{finite sum set} of $A$:
\[
\FS(A) = \left\{\left.\Sigma(F) := \sum_{n\in F} n  \right\vert \emptyset \neq F\subset A \text{ finite}\right\}.
\]
The set $A$ is called \emph{complete} if $\FS(A)$ is cofinite in $\N$, i.e. if $\#(\N\butnot\FS(A)) < \infty$.
\end{definition}

\begin{definition}
A set $A\subset\N$ is called \emph{dispersing} if for every irrational $\alpha \in \T := \R/\Z$, the set $A\alpha = \{n\alpha : n\in A\}$ is dense in $\T$.
\end{definition}

The word ``completeness'' was originally used to refer to a slightly different concept; namely, the set $\FS(A)$ was required to equal $\N$ rather than to merely be cofinite in it. This definition appeared first in a problem asked by Hoggatt and King and answered by Silver \cite{HKS}, and later the same year in a paper of Brown \cite{Brown}. It seems that Graham \cite{Graham} was the first to use the word ``completeness'' in the same (now standard) way that we use it.

By contrast, the notion of a ``dispersing'' set has not appeared explicitly in the literature before. It bears some resemblance to the notion of a ``Glasner set'' (cf. \cite{Glasner, BerendPeres}, and see \cite{BerendBoshernitzan} for a generalization).\Footnote{A set $A$ is called a \emph{Glasner set} if for every infinite set $I\subset\T$ and for every $\epsilon > 0$, there exists $n\in A$ such that the set $nI$ is $\epsilon$-dense.} However, the differences between these definitions are significant, and we will not discuss Glasner sets in this paper.

Although their definitions are very different, the notions of completeness and dispersion do share some relation. Both describe some notion of ``largeness'' of a set of integers which measures not just the growth rate but also in some sense the arithmetical properties of the set in question. This is manifested in the following classical results about complete and dispersing sets, which are due to Birch and Furstenberg, respectively:

\begin{theorem}[\cite{Birch2}]
\label{theorembirch}
For any coprime integers $a,b\in\N$ such that $a,b \geq 2$, the set $\Gamma(a,b)$ is complete.
\end{theorem}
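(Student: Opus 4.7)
The plan is to write any large $N$ in base $b^k$ for a well-chosen $k$ and replace each digit by a sum of distinct powers of $a$, so that the resulting summands $a^n b^m$ lie in $\Gamma(a,b)$. The coprimality $\gcd(a,b)=1$ enters through a \emph{residue lemma}: by Euler's theorem, $a$ has a multiplicative order $d_k := \operatorname{ord}_{b^k}(a)$, so the powers $a^{j d_k}$ for $j\ge 0$ are all $\equiv 1\pmod{b^k}$. Consequently, for any target residue $r\in\{0,1,\dots,b^k-1\}$ and any starting index $j_0$, the sum $\sigma_{r,j_0}:=\sum_{j=j_0}^{j_0+r-1} a^{j d_k}$ consists of $r$ distinct powers of $a$ and satisfies $\sigma_{r,j_0}\equiv r\pmod{b^k}$, with the exponent window at my disposal.

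Given $N$, I would write $N=\sum_{i=0}^{L} c_i b^{ik}$ with $0\le c_i<b^k$ and process the blocks from least to most significant. For each block $i$, let $\tilde c_i\in[0,b^k)$ denote the effective digit ($c_i$ plus any incoming carry, reduced mod $b^k$); invoke the residue lemma to obtain $\sigma_i\equiv\tilde c_i\pmod{b^k}$; and assign $b^{ik}\sigma_i$ as the block's contribution. Each summand $a^{j d_k} b^{ik}$ lies in $\Gamma(a,b)$, and the block overshoot $(\sigma_i-\tilde c_i)/b^k$ plus the old carry becomes the carry passed to the next block. At the top, a bounded residual is cleared using small elements of $\Gamma(a,b)$.

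The main obstacle is balancing three budgets simultaneously. The parameter $k$ must be large enough that the per-block overshoot stays small relative to $b^k$, so that the carries remain bounded as one ascends through the $L+1=O(\log N)$ blocks. The starting indices $j_0^{(i)}$ must be scheduled so that $\sum_i \sigma_i b^{ik}$ stays close to $N$ rather than exploding, since $\sigma_{r,j_0}$ grows rapidly with $j_0$. And the exponent windows of $a^{j d_k}$ used at different levels must be disjoint, so that no two blocks reuse the same power of $a$: terms from different levels are automatically distinct by uniqueness of the representation $a^n b^m$ (which itself uses $\gcd(a,b)=1$), but within the common $a$-exponent space the choices across levels must not collide. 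This is handled by reserving a disjoint infinite arithmetic progression inside $d_k\N$ for each level, which is possible because only finitely many blocks appear and each uses at most $b^k$ exponents. The heart of the argument is finding a joint parameter choice that meets all three constraints and verifying that the final residual at the top is absorbable.
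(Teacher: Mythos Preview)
Your approach is in the spirit of Birch's original elementary argument, and it is genuinely different from what the paper does: the paper recovers Theorem~\ref{theorembirch} (indeed the stronger Theorem~\ref{theoremdavremark}) as the special case $b_m=b^m$ of Theorem~\ref{completenesstheorem1fin}, whose proof goes through the Main Theorem and hence through the Burr--Erd\H{o}s syndeticity lemma and the Bergelson--Furstenberg--Weiss piecewise-Bohr theorem (Lemmas~\ref{lemmasyndetic} and~\ref{lemmaBFW}), with no explicit digit construction at all.

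However, your plan has a real gap in the size control. The quantity $\sigma_{r,j_0}=\sum_{j=j_0}^{j_0+r-1}a^{jd_k}$ is astronomically large: since $a^{d_k}\equiv 1\pmod{b^k}$ forces $a^{d_k}\ge 1+b^k$, already $\sigma_{r,0}\ge a^{(r-1)d_k}\ge (b^k)^{r-1}$, and you need $r$ as large as $b^k-1$. Thus the per-block overshoot $(\sigma_i-\tilde c_i)/b^k$ is of order at least $(b^k)^{b^k-3}$, not ``small relative to $b^k$''; the carries explode rather than stabilise, and enlarging $k$ only makes this worse. A working digit-by-digit proof needs representatives $\sigma\equiv r\pmod{b^k}$ with $\sigma=O(b^k)$, and these cannot be assembled from powers of $a$ that are each $\equiv 1\pmod{b^k}$; one must instead use many small powers of $a$ and argue separately that their subset sums hit every residue class with a representative of controlled size --- that is the genuine combinatorial content you have not yet supplied. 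Incidentally, your worry about exponent collisions across levels is unnecessary: since $\gcd(a,b)=1$, the map $(n,m)\mapsto a^n b^m$ is injective, so a summand $a^{jd_k}b^{ik}$ at level $i$ is automatically distinct from every summand at level $i'\ne i$ regardless of the $a$-exponent.
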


\begin{theorem}[{\cite[Theorem IV.1]{Furstenberg3}}]
\label{theoremfurstenberg}
Fix $a,b\in\N$ with $a,b\geq 2$ and assume that $a,b$ are not powers of a single integer. Then $\Gamma(a,b)$ is dispersing.
\end{theorem}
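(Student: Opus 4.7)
The plan is to reduce to the classical Furstenberg classification of closed subsets of $\T$ jointly invariant under the two maps $T_a, T_b : \T \to \T$ given by $T_c(x) = cx \pmod 1$, and then to note that the orbit closure of an irrational point cannot be a finite rational set.

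Concretely, set $X := \overline{\Gamma(a,b)\alpha} \subseteq \T$. The identity $T_a(a^n b^m \alpha) = a^{n+1}b^m\alpha \in \Gamma(a,b)\alpha$, together with its $T_b$ analogue and the continuity of $T_a, T_b$, shows that $X$ is a closed subset of $\T$ invariant under both $T_a$ and $T_b$. Meanwhile, by unique prime factorization, the hypothesis that $a, b$ are not both powers of a single integer is equivalent to multiplicative independence: $a^p \neq b^q$ for all $p, q \in \N$.

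I would then invoke Furstenberg's classification: for multiplicatively independent $a, b \geq 2$, every closed $T_a, T_b$-invariant subset of $\T$ is either finite (and contained in $\Q$) or equal to $\T$. Applied to $X$: since $\alpha$ is irrational and multiplication by a positive integer preserves irrationality, every element of $\Gamma(a,b)\alpha$ is irrational, so $X$ contains an irrational and cannot be a finite subset of $\Q$. Hence $X = \T$, which is precisely the dispersing conclusion for $\Gamma(a,b)$.

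The nontrivial step is establishing the classification itself. My approach would be to extract, via Zorn's lemma, a minimal nonempty closed $T_a, T_b$-invariant $Y \subseteq X$ and pass to its difference set $D := Y - Y \subseteq \T$, which is again closed, jointly $T_a, T_b$-invariant, and contains $0$. The key point, and the main obstacle, is to show that $0$ is a non-isolated point of $D$: given any small $d > 0$ in $D$, the multiplicative independence of $a, b$ (equivalently, the Kronecker density of $\{n\log a + m\log b : n, m \in \Neur\}$ modulo $\log \min(a,b)$) lets one choose $T_a^n T_b^m$ that shrinks $d$ to any prescribed scale while staying in $D$. A standard density argument then upgrades this accumulation at $0$ to $D = \T$, whence $Y = \T$, completing the classification.
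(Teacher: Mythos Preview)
The paper does not prove this theorem: it is quoted from Furstenberg's original 1967 paper and invoked later as a black box (in the proof of Theorem~\ref{dispersingtheorem0}). So there is no ``paper's own proof'' to compare against; your outline is essentially a sketch of Furstenberg's original argument.

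That said, your sketch of the classification has a genuine gap at the final step. Once the minimal set $Y$ is assumed infinite, the non-isolation of $0$ in $D = Y - Y$ is immediate from compactness of $\T$ (an infinite set has a limit point, and differences of nearby points lie near $0$); this is not ``the main obstacle'', and no shrinking construction is needed here. The sublacunarity of $\Gamma(a,b)$ (Lemma~IV.1 in Furstenberg, also quoted in the paper) then does give $D = \T$ via the density argument you allude to. The real difficulty is the throwaway clause ``whence $Y = \T$'': the implication $Y - Y = \T \Rightarrow Y = \T$ is false for general closed $Y \subseteq \T$ and does not follow from minimality in any obvious way. There is also the prior issue that an infinite closed invariant $X$ need not contain an \emph{infinite} minimal subset, so even a correct ``minimal $Y$ is finite or $\T$'' dichotomy would not finish the classification for $X$. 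Furstenberg closes these gaps with additional structural input --- working with translates of minimal sets and stabilizer-type subgroups, and passing to quotient tori --- rather than by a one-line deduction. Without some device of that kind, your argument stops short of the conclusion precisely at the step you pass over.
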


These theorems indicate that some sort of ``semigroup property'' is useful for proving both completeness and dispersing results. However, on its own the semigroup property is not enough. Indeed, for any $a\in\N$, $a\geq 3$, the cyclic semigroup $\Gamma(a) = a^\Neur = \{a^n : n\in\Neur\}$ is neither complete nor dispersing: since $\FS(\Gamma(a))$ contains only those numbers whose base $a$ expansion consists of zeros and ones, $\FS(\Gamma(a))$ is of density zero (so $\Gamma(a)$ is incomplete), while if $\alpha\in\T$ is an irrational whose base $a$ expansion is missing some digit, then the set $\Gamma(a) \alpha$ is nowhere dense in $\T$ (so $\Gamma(a)$ is not dispersing). So it makes sense to augment the semigroup property with some information on the size of the set in question: the sets $\Gamma(a,b)$ are larger than the sets $\Gamma(a)$, and in general it is easier for larger sets to be complete and dispersing. In the case of the sets $\Gamma(a,b)$, information on the size is provided by the following lemma due to Furstenberg:

%


\begin{theorem}[{\cite[Lemma IV.1]{Furstenberg3}}]
Fix $a,b\in\N$ with $a,b\geq 2$ and assume that $a,b$ are not powers of a single integer. Then if we write
\[
\Gamma(a,b) = \{n_1,n_2,\ldots\}
\]
with $n_1 < n_2 < \cdots$, then
\begin{equation}
\label{sublacunary}
\lim_{k\to\infty} \frac{n_{k + 1}}{n_k} = 1.
\end{equation}
\end{theorem}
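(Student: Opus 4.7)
The plan is to pass to logarithms, reducing the claim to an additive density statement that follows from Kronecker's theorem. The hypothesis that $a,b$ are not powers of a single integer is equivalent, for integers $a,b \geq 2$, to the irrationality of $\log a/\log b$; this is a standard consequence of unique prime factorization (if $\log a/\log b = p/q$ in lowest terms then $a^q = b^p$, which forces a common base $c$ with $a = c^p$, $b = c^q$). The same irrationality guarantees that the representation $n = a^i b^j$ with $i,j \in \Neur$ is unique. Setting $S := \{i\log a + j\log b : i,j \in \Neur\}$, the conclusion $n_{k+1}/n_k \to 1$ becomes the statement that for every $\delta > 0$, all but finitely many $L \in S$ admit another element of $S$ in the interval $(L, L+\delta)$.

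Applying Kronecker's theorem to the irrational $\log a/\log b$, I first fix positive integers $p_1, q_1, p_2, q_2$ such that
\[
0 < p_1 \log a - q_1 \log b < \delta \quad \text{and} \quad 0 < q_2 \log b - p_2 \log a < \delta.
\]
Given any $L = i\log a + j \log b \in S$, I then try two candidate upward shifts. If $j \geq q_1$, then $(i+p_1)\log a + (j - q_1)\log b$ is an element of $S$ lying in $(L, L+\delta)$; if $i \geq p_2$, then the symmetric shift $(i-p_2)\log a + (j+q_2)\log b$ works. The only pairs $(i,j) \in \Neur^2$ for which neither option is available are those with $i < p_2$ and $j < q_1$, which form a finite set.

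Consequently, for every $\delta > 0$, all but finitely many $k$ satisfy $\log n_{k+1} - \log n_k < \delta$, so $\limsup_k n_{k+1}/n_k \leq e^\delta$. Letting $\delta \to 0$ yields \eqref{sublacunary}. I do not anticipate a real technical obstacle; the main step is the conceptual one of recognizing that in additive (logarithmic) coordinates $\Gamma(a,b)$ becomes a lattice cone in $\Neur^2$, and that the no-common-power hypothesis is precisely what is needed to invoke Kronecker to produce small positive and small negative combinations $p\log a - q \log b$, which are then used to move between neighboring elements of $\Gamma(a,b)$ inside the cone.
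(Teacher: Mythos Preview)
Your argument is correct. Note, however, that the paper does not supply its own proof of this statement: it is quoted verbatim as \cite[Lemma IV.1]{Furstenberg3} and used as background, so there is nothing in the paper to compare your approach against. For what it is worth, the argument you give---pass to logarithms, use irrationality of $\log a/\log b$ to find small positive increments $p_1\log a - q_1\log b$ and $q_2\log b - p_2\log a$, then shift inside the cone $\Neur\log a + \Neur\log b$ except at finitely many lattice points near the corner---is essentially the standard proof and matches Furstenberg's original reasoning.
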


An increasing sequence $(n_k)_1^\infty$ satisfying \eqref{sublacunary} is called \emph{sublacunary}. By extension, the corresponding set $\{n_1,n_2,\ldots\}$ is also called sublacunary.

\begin{remark}
When interpreted as a condition on sequences, sublacunarity is an ``upper bound'' on the growth rate, but when interpreted as a condition on sets, sublacunarity is a ``lower bound'' on the size of a set (i.e. any set which contains a sublacunary set is also sublacunary).
\end{remark}

Just as that the semigroup property was seen to be insufficient without the sublacunarity property, so also the sublacunarity property is not enough to guarantee that a set is complete or dispersing without an additional property. We illustrate this fact by the following simple examples:

\begin{example}
\label{exampleNCD0}
Let $\alpha\in\T$ be an irrational number, and let $A = \{n \text{ even} : n\alpha\notin U\}$, where $U \subset \T$ is a non-dense open subset of $\T$. Then $A$ is sublacunary but neither complete nor dispersing. Indeed, it is clear that $\FS(A)$ contains only even numbers, while $A\alpha$ is disjoint from $U$ and therefore not dense. On the other hand, since $U$ is non-dense, $A$ is syndetic and therefore sublacunary. (Recall that a set $S\subset\N$ is called \emph{syndetic} if there exists a number $\synd\in\N$ (the \emph{syndeticity constant}\Footnote{Technically, we should say that the syndeticity constant is the \emph{smallest} number $\synd\in\N$ satisfying this condition.}) such that for all $n\in\N$, there exists $i = 0,\ldots,\synd$ such that $n + i \in S$.\Footnote{Sets which are syndetic according to our terminology are sometimes called \emph{syndetic in $\N$}, to distinguish them from sets which are syndetic in $\Z$. Since we deal only with sets which are syndetic in $\N$, we abbreviate by omitting the phrase ``in $\N$''. A similar comment applies to our definition of Bohr sets below.\label{footnoteinN}})
\end{example}

\begin{example}
\label{exampleNCD}
For each $\alpha\in\T$, we let $\|\alpha\|$ denote the distance in $\T$ from $\alpha$ to $0$, or equivalently the distance from any representative of $\alpha$ to the nearest integer. Fix a badly approximable\Footnote{An irrational $\alpha\in\T$ is called \emph{badly approximable} if there exists $\epsilon > 0$ such that for all $q\in\N$, we have $\|q\alpha\|\geq \epsilon/q$, or equivalently if the continued fraction expansion of $\alpha$ has bounded entries.} irrational $\alpha\in\T$, and for each $k\in\N$ let $k^3 \leq n_k < (k + 1)^3$ be chosen so as to minimize $\|n_k \alpha\|$. By a standard result in Diophantine approximation \cite[Theorem 26]{Khinchin_book}, we have $\|n_k \alpha\| \leq C[(k + 1)^3 - k^3]^{-1}$, where $C > 0$ is a large constant depending on $\alpha$. Choose $k_0\in\N$ large enough so that
\[
\sigma := \sum_{k = k_0}^\infty C[(k + 1)^3 - k^3]^{-1} < 1/2,
\]
and let $A = \{n_{k_0},n_{k_0 + 1},\ldots\}$. Then $A$ is sublacunary but neither complete nor dispersing. Indeed, while the bounds $k^3 \leq n_k < (k + 1)^3$ guarantee that $A$ is sublacunary, the fact that $\FS(A)$ is disjoint from the positive density set $\{n\in\N : \|n\alpha\| > \sigma\}$ implies that $A$ is not complete, and the fact that $\|n_k \alpha\| \to 0$ implies that $A\alpha$ is nowhere dense, so $A$ is not dispersing.
\end{example}

\begin{remark}
\label{remarksyndetic}
Although the set $\FS(A)$ of Examples \ref{exampleNCD0} and \ref{exampleNCD} is not cofinite, it is syndetic. The syndeticity of $\FS(A)$ for every sublacunary set $A$ follows from Lemma \ref{lemmasyndetic} below, which is a result due to Burr and Erd\doubleacute os \cite[Lemma 3.2]{BurrErdos}. However, Examples \ref{exampleNCD0} and \ref{exampleNCD} shows that cofiniteness of $\FS(A)$ is a much subtler matter.
\end{remark}


Examples \ref{exampleNCD0} and \ref{exampleNCD} notwithstanding, we will show in this paper that certain rather small subsets of $\Gamma(a,b)$ (or of more general multiplicative subsemigroups of $\N$) can be shown to be complete and/or dispersing. We conclude this introduction with a summary of the results obtained in this paper. (The proofs will be provided in the subsequent sections.)

\begin{convention*}
From now on, numerical variables (usually lowercase Latin letters) are assumed to take values in $\N$, and set variables (usually uppercase Latin letters) are assumed to take values which are subsets of $\N$, unless otherwise specified.
\end{convention*}

\begin{convention*}
If $\ast$ is an operation and $A,B$ are sets, then
\[
A\ast B := \{a\ast b : a\in A, \; b\in B\}.
\]
We may abbreviate $\{a\}\ast B$ by $a\ast B$ and $A\ast \{b\}$ by $A\ast b$. For example, $a^S = \{a^n : n\in S\}$. Note that this convention was already used implicitly in formula \eqref{Gammaabdef} when we wrote $\Gamma(a,b) = a^\Neur b^\Neur$.
\end{convention*}

{\bf Acknowledgements.} The authors thank Trevor Wooley for directing them to the paper of Freeman cited later in this paper. The first-named author was supported by NSF grant DMS-1162073. The authors thank the anonymous referee for valuable comments.

\subsection{Completeness results}
To motivate our first result, we recall a remark in Birch's paper which he attributes to Davenport \cite[para. after Theorem]{Birch2}, namely that the proof of Theorem \ref{theorembirch} in that paper can be strengthened to demonstrate the following ``finitary'' version of the theorem:
\begin{theorem}[Davenport's remark]
\label{theoremdavremark}
For every $a,b\geq 2$ such that $\gcd(a,b) = 1$, there exists $N\in\N$ such that the set
\[
\{a^n b^m : n,m\in\Neur , \; m\leq N\}
\]
is complete.
\end{theorem}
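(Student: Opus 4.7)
The plan is to reduce the statement to a question about representations of integers in a non-standard digit system, and then attack it via a greedy algorithm. Write $A_N := \{a^n b^m : n \in \Neur,\, m \leq N\}$ and
\[ B_N := \left\{\sum_{m \in F} b^m : F \subseteq \{0, 1, \ldots, N\}\right\} \]
(where the empty sum gives $0$). Since $\gcd(a,b) = 1$, each element $a^j b^m$ of $A_N$ factors uniquely as a pair $(j,m)$, so grouping a sum of distinct elements of $A_N$ by the $a$-exponent yields the identification
\[ \FS(A_N) = \left\{\sum_{j \geq 0} c_j\, a^j \;:\; c_j \in B_N, \text{ only finitely many } c_j \neq 0\right\} \cap \N. \]
Thus the task becomes to find $N$ for which every sufficiently large integer $n$ admits such a ``base-$a$ expansion with digits in $B_N$.''

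The first ingredient I would prove is a residue lemma: for $N$ sufficiently large, $B_N \bmod a = \Z/a\Z$. Because $\gcd(a,b) = 1$, $b$ has some finite multiplicative order $d$ modulo $a$, and the additive subgroup of $\Z/a\Z$ generated by $\{b^i \bmod a : 0 \leq i < d\}$ contains $1 = b^0$, hence equals $\Z/a\Z$. Therefore any residue $r$ can be written as $\sum_{i=0}^{d-1} x_i b^i \pmod a$ with $x_i \in \{0, 1, \ldots, a-1\}$. Provided $\{0, \ldots, N\}$ contains at least $a-1$ elements in each residue class modulo $d$ (say, $N \geq d(a-1) - 1$), each $x_i$ can be realized as the size of a subset of $\{m \in \{0, \ldots, N\} : m \equiv i \pmod d\}$, producing $c_r \in B_N$ with $c_r \equiv r \pmod a$. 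Fix such a choice with $c_0 = 0$, and set $C := \max_r c_r$.

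With the $c_r$'s fixed, the greedy algorithm runs as follows: put $n_0 := n$ and $n_{j+1} := (n_j - c_{n_j \bmod a})/a$, stopping when some $n_J = 0$. As long as $n_j \geq C$, the chosen $c$ satisfies $c \leq C \leq n_j$, keeping $n_{j+1} \in \Neur$ and forcing $n_{j+1} \leq n_j / a$; thus the sequence decays exponentially. If it does terminate at $0$, then $n = \sum_{j < J} c_{n_j \bmod a}\, a^j$ is the desired decomposition with all digits in $B_N$.

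The main obstacle is that the sequence eventually enters the ``bad zone'' $[1, C)$, where $c_{n_j \bmod a}$ may exceed $n_j$ and stall the algorithm at a small value possibly not in $\FS(A_N) \cup \{0\}$. I would address this by exploiting the flexibility of choice: for $N$ taken much larger than what the residue lemma requires, $B_N$ contains many elements in each residue class mod $a$, so at each greedy step there are many valid choices for $c_j$, giving many possible continuations whose $n_{j+1}$'s differ by integers of the form $(c - c')/a$ where $c, c'$ are distinct valid choices. The crux is to show that for $n$ sufficiently large one can steer through these alternatives to make the algorithm terminate at $n_J = 0$ --- equivalently, that the set of end-values reachable under all branchings of the algorithm contains $0$. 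Formalizing this steering argument is where I expect the main technical work to lie.
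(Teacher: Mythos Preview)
Your route is entirely different from the paper's. The paper does not argue via digit expansions at all: it obtains Davenport's remark as the special case $b_m=b^m$, $N=4a-5$, of Theorem~\ref{completenesstheorem1fin}, which is in turn deduced from the Main Theorem (Theorem~\ref{maintheorem}). In that argument one splits $A$ into pieces $B$ and $C$; the growth condition~\eqref{casselsassumption1prime} on $B$ forces $\FS(B)$ to contain a piecewise Bohr set (via the Burr--Erd\H{o}s syndeticity lemma and the Bergelson--Furstenberg--Weiss sumset theorem, Lemmas~\ref{lemmasyndetic} and~\ref{lemmaBFW}), and then the Diophantine condition~\eqref{casselsassumption2} together with the residue condition~\eqref{casselsassumption3} on $C$ are fed into a compact-semigroup argument in $\T^d$ to upgrade ``piecewise Bohr'' to ``thick'' and finally to ``cofinite''. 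Your reformulation $\FS(A_N)=\{\sum_j c_j a^j:c_j\in B_N\}\cap\N$ and your residue lemma are both correct; what you are proposing is essentially Birch's original elementary strategy.

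The gap is exactly where you place it, but it is the whole proof rather than a loose end. The heuristic ``for large $N$ there are many digits in each residue class, so some branch reaches $0$'' needs a genuine structural input: a digit set can contain $0$, $1$, and a complete residue system modulo $a$, have $\gcd$ equal to $1$, and still miss infinitely many integers. Concretely, for $(a,b)=(3,5)$ your $B_1=\{0,1,5,6\}$ has $5$ as its \emph{only} element congruent to $2\pmod 3$, so the forced moves $11\mapsto 2$, $38\mapsto 11$, $119\mapsto 38,\dots$ show that $\FS(A_1)$ omits this entire tower (one checks directly that $38=27+11$ is unreachable since $11\notin\FS(\{1,3,5,9,15\})$). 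Enlarging $N$ adds further representatives of residue $2$, but the smallest new one is $26$, which is useless once $n_j<26$; so the steering cannot operate inside the bad zone itself. What must actually be shown is that the branching, run for many steps while $n_j$ is still large, generates enough distinct trajectories that at least one of them enters the small range at a value from which plain greedy terminates --- equivalently, that the permanent exceptional set at the bottom is finite and can be avoided from above. Making this quantitative is exactly the content of Birch's original argument; it does go through, but your proposal stops just before supplying the mechanism.
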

A quantitative version of Theorem \ref{theoremdavremark} was proven by Hegyv\'ari \cite{Hegyvari}. We will strengthen Theorem \ref{theoremdavremark} by replacing the expression $b^m$ by an arbitrary expression depending on $m$, subject to some mild restraints, which should be thought of as the analogue of the condition $\gcd(a,b) = 1$. At the same time we will improve Hegyv\'ari's result by giving a better quantitative bound on the number $N$. Precisely, we have the following:

\begin{theorem}
\label{completenesstheorem1fin}
Fix $a\geq 2$, and let $(b_m)_0^N$ be a finite sequence such that
\begin{itemize}
\item[(I)] The numbers $(\log_a(b_m))_0^N$ are distinct mod 1.
\item[(II)] $\gcd(b_0,b_1,\cdots,b_{N - 3(a - 1)}) = 1$.
\item[(III)] $\#\{m = 0,\ldots,N - 3(a - 1) : \gcd(a,b_m) = 1\} \geq a - 1$.
\end{itemize}
Then the set
\begin{equation}
\label{completeness1fin}
A = \{a^n b_m : n,m\in\Neur , \; m\leq N\}
\end{equation}
is complete.
\end{theorem}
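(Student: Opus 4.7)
Write $M = N - 3(a-1)$. By condition (I), the map $(n, m) \mapsto a^n b_m$ is injective on $\Neur \times \{0, \ldots, N\}$, so a positive integer $T$ lies in $\FS(A)$ if and only if we can write $T = \sum_{m=0}^N b_m k_m$, where each $k_m \in \Neur$ has a base-$a$ expansion using only the digits $0$ and $1$; equivalently, $k_m \in \{0\} \cup \FS(a^\Neur)$. The goal is to produce such a decomposition for every sufficiently large $T$.

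\textbf{Residue coverage.} The key combinatorial ingredient I would first establish is: if $r_1, \ldots, r_{a-1}$ are units in $\Z/a\Z$, then $\{\sum_i \epsilon_i r_i : \epsilon_i \in \{0,1\}\} = \Z/a\Z$. By Kneser's theorem applied to $\{0, r_1\} + \cdots + \{0, r_{a-1}\}$, the sumset has size at least $2(a-1) - (a-2)|H|$, where $H$ is its stabilizer; this is at least $a$ when $H$ is trivial. The case $|H|>1$ reduces, upon quotienting by $H$, to the same statement in $\Z/(a/|H|)\Z$ (using that images of units mod $a$ remain units mod $a/|H|$), so an induction on $a$ closes the argument. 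Combined with condition (III), this guarantees that for every residue $r \in \Z/a\Z$ there is an $F \subseteq \{0, \ldots, M\}$ with $\sum_{m \in F} b_m \equiv r \pmod a$.

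\textbf{Greedy construction.} For $T$ large, I run the natural digit-by-digit algorithm. Set $T_0 := T$ and at each level $n \geq 0$ pick an $F_n \subseteq \{0, \ldots, M\}$ with $\sum_{m \in F_n} b_m \equiv T_n \pmod a$; define $T_{n+1} := (T_n - \sum_{m \in F_n} b_m)/a \in \Neur$. Since $\sum_{m \in F_n} b_m$ is uniformly bounded by $\sum_{m=0}^M b_m$, the $T_n$ contract geometrically and eventually enter a fixed bounded range, say at step $L$. Because each $F_n \subseteq \{0, \ldots, M\}$ is a set and different levels use different $n$, the pairs $(n, m)$ with $m \in F_n$ are automatically distinct, and $\sum_{n < L} a^n \sum_{m \in F_n} b_m$ is already of the desired form.

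\textbf{Endgame and main obstacle.} What remains is to write $a^L T_L$, with $T_L$ bounded, as a sum of distinct $a^n b_m$'s using $(n, m)$ pairs disjoint from those already chosen. This is where the reserved indices $\{M+1, \ldots, N\}$ come in: they supply $3(a-1)$ fresh values of $m$, so any such representation will not conflict with the main loop. Condition (II) provides $\gcd(b_0, \ldots, b_M) = 1$, whence by Sylvester--Frobenius any sufficiently large integer is a nonnegative integer combination of $b_0, \ldots, b_M$; taking $T$ (and hence $L$) large enough, one should be able to engineer $T_L$ to lie in this Frobenius range, then convert the Frobenius representation into the required 0-1 base-$a$ form by spending the $3(a-1)$ reserved indices. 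The hard part is exactly this endgame bookkeeping: ensuring that the bounded remainder can be hit \emph{exactly} with a 0-1 digit pattern drawn from a fixed reservoir of size $3(a-1)$. I expect the count $3(a-1)$ to arise from roughly (i) one layer of $a-1$ indices to adjust the final residue mod $a$, (ii) a second layer to absorb the carry into the next digit, and (iii) a third layer of slack to clear the Frobenius obstruction; verifying that these three layers interact consistently is where the real combinatorial work lies.
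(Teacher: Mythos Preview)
Your approach is a direct digit-by-digit construction and is genuinely different from the paper's, which does no explicit bookkeeping. The paper sets $M=N-3(a-1)$, $B=\{a^n b_m : n\in\N,\;M<m\le N\}$, $C=\{a^n b_m : n\in\N,\;0\le m\le M\}$, and feeds this pair into its general Main Theorem (via Corollary~\ref{mainlemmacorollary}). The count $3(a-1)$ enters only through a growth hypothesis on $B$: by (I) each of the $3(a-1)$ reserved values of $m$ contributes exactly one element to $B\cap(K,aK]$ for large $K$, giving the required $3L$ with $L=a-1$, so $B$ decomposes into three pieces each with syndetic $\FS$ (Burr--Erd\H{o}s). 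Conditions (II) and (III) are used only to verify the congruence hypothesis $\FS(C)+q\Z=\Z$ for every $q$; the divergence $\sum_{n\in C}\|n\alpha\|=\infty$ comes from the factorization $C=a^\N\{b_0,\dots,b_M\}$ via Remark~\ref{remarkAI}. The Main Theorem then finishes through the Bergelson--Furstenberg--Weiss piecewise-Bohr theorem and a structure argument for closed subsemigroups of $\T^d$; no bounded remainder is ever handled directly.

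Your main loop is sound, but the endgame plan has a genuine gap. The reserved $b_{M+1},\dots,b_N$ carry \emph{no} hypotheses beyond (I): they need not include any unit modulo $a$ and may all be divisible by an arbitrary large prime, so they cannot perform the ``layer (i)'' residue adjustment you describe. The Frobenius step $T_L=\sum_{m\le M}c_m b_m$ gives no control over the base-$a$ digits of the $c_m$; since by (I) distinct pairs $(n,m)$ yield distinct products $a^n b_m$, a digit $d\ge 2$ in some $c_m$ cannot be split across other indices, and you have proposed no mechanism to clear it. Enlarging $T$ enlarges $L$ but leaves $T_L$ in the same fixed bounded window, so that does not push $T_L$ past any threshold you have not already arranged. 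The paper's ergodic-flavoured route sidesteps exactly this bounded-remainder obstruction; that, rather than any three-layer carry scheme, is where the constant $3(a-1)$ actually comes from.
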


Theorem \ref{theoremdavremark} corresponds to the special case $b_m = b^m$, where $\gcd(a,b) = 1$. In this case, the conditions (I)-(III) are satisfied when $N = 4a - 5$, which vastly improves the fourfold-exponential bound of \cite{Hegyvari}. Taking the slightly more general special case $b_m = b^{k_m}$ yields the following corollary (which implies the aforementioned improvement of Hegyv\'ari's result):


\begin{corollary}
\label{completenesscorollary1}
Fix $a,b\geq 2$ coprime and let $(k_m)_0^{4a - 5}$ be a finite sequence of distinct integers such that $k_0 = 0$. Then the set
\[
\{a^n b^{k_m} : n,m\in\Neur , \; m\leq 4a - 5\}
\]
is complete.
\end{corollary}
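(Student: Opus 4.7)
My plan is to deduce the corollary directly from Theorem~\ref{completenesstheorem1fin} by specializing to $N = 4a-5$ and $b_m = b^{k_m}$. With this choice the set displayed in the corollary coincides with the set \eqref{completeness1fin}, so it suffices to verify that hypotheses (I), (II), and (III) hold for this family.

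For (I), I first observe that $\log_a b$ is irrational under the hypothesis that $a,b\geq 2$ and $\gcd(a,b)=1$: indeed, $\log_a b = p/q$ with $q\geq 1$ would give $b^q = a^p$, and the coprimality of $a$ and $b$ together with $a,b\geq 2$ would force $p=q=0$, a contradiction. Hence for distinct indices $m \neq m'$ the number $(k_m - k_{m'})\log_a b$ is a nonzero irrational, so $\log_a(b_m) = k_m \log_a b$ and $\log_a(b_{m'}) = k_{m'}\log_a b$ differ by a non-integer, establishing (I).

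For (II) and (III), the relevant threshold is $N - 3(a-1) = 4a - 5 - 3(a-1) = a-2$, so the condition needs to be checked only for indices in $\{0,1,\ldots,a-2\}$, a set of size $a-1$. The normalization $k_0 = 0$ gives $b_0 = 1$, so $\gcd(b_0,b_1,\ldots,b_{a-2}) = 1$ trivially, verifying (II). For (III), the hypothesis $\gcd(a,b) = 1$ immediately yields $\gcd(a,b^{k_m}) = 1$ for every $m$; hence all $a-1$ indices in $\{0,\ldots,a-2\}$ qualify, which meets (and exactly saturates) the required count of $a-1$.

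Since this is purely a matter of matching parameters, there is no substantive obstacle in the corollary itself; all of the real work is done in Theorem~\ref{completenesstheorem1fin}. The only point to watch is condition (I), which is where the coprimality hypothesis on $a$ and $b$ is actually used beyond its trivial role in verifying (III): it guarantees the irrationality of $\log_a b$ that allows the distinctness of the $k_m$ to translate to distinctness of $k_m\log_a b \bmod 1$.
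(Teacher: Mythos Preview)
Your proof is correct and matches the paper's approach exactly: the paper states the corollary immediately after observing that it follows from Theorem~\ref{completenesstheorem1fin} by taking $b_m = b^{k_m}$, and you have simply written out the verification of hypotheses (I)--(III) that the paper leaves implicit. Your computation $N - 3(a-1) = a-2$ and the use of $b_0 = b^{k_0} = 1$ to trivialize (II) are precisely the intended checks.
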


Finally, we also state an infinitary version of Theorem \ref{completenesstheorem1fin}:

\begin{corollary}
\label{completenesstheorem1}
Fix $a\geq 2$, and let $(b_m)_0^\infty$ be a sequence such that
\begin{itemize}
\item[(I)] The sequence $(\log_a(b_m))_0^\infty$ contains infinitely many distinct elements mod 1.
\item[(II)] $\gcd(b_0,b_1,\ldots) = 1$.
\item[(III)] $\#\{m\in\Neur : \gcd(a,b_m) = 1\} \geq a - 1$.
\end{itemize}
Then for some $N\in\N$, the set $A$ defined by \eqref{completeness1fin} is complete.
\end{corollary}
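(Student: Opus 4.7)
The plan is to deduce Corollary~\ref{completenesstheorem1} from Theorem~\ref{completenesstheorem1fin} by extracting a suitable finite subcollection of the $b_m$'s. Since completeness is preserved by taking supersets, it suffices to produce a finite index set $J\subset\Neur$ such that $\{a^n b_j : n\in\Neur,\; j\in J\}$ is complete; then any $N\geq\max J$ forces the set $A$ of \eqref{completeness1fin} to be complete. Within this finite subcollection I may freely reorder the $b_j$'s before invoking the finitary theorem, since Theorem~\ref{completenesstheorem1fin} only sees the underlying set.

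First I would use (II) and (III) of the infinite hypothesis to choose a finite $J_0\subset\Neur$ with $\gcd(b_j:j\in J_0)=1$ and with at least $a-1$ indices $j\in J_0$ satisfying $\gcd(a,b_j)=1$. Next I would prune $J_0$ so that the values of $\log_a(b_j)$ become pairwise distinct mod~$1$: declaring $j\sim j'$ whenever $b_j/b_{j'}$ is an integer power of $a$, I retain from each $\sim$-class meeting $J_0$ only the index whose $b_j$ has minimal $a$-adic valuation. Every discarded $b_j$ is then a multiple of the representative retained in its class, so the overall gcd is preserved; moreover, two distinct values $b,b'$ both coprime to $a$ cannot satisfy $b/b'=a^k$ with $k\neq 0$, so any two coprime-to-$a$ values in $J_0$ lie in distinct $\sim$-classes and both survive the pruning. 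Call the pruned set $J_0'$. Finally, by (I) there are infinitely many distinct values of $\log_a(b_m)$ mod~$1$, so I may append $3(a-1)$ further indices $j_1,\ldots,j_{3(a-1)}$ whose $\log_a$-values mod~$1$ are all new. Enumerating $J := J_0'\cup\{j_1,\ldots,j_{3(a-1)}\}$ with $J_0'$ listed first as $(c_0,\ldots,c_K)$, where $K=|J_0'|+3(a-1)-1$, the conditions (I)--(III) of Theorem~\ref{completenesstheorem1fin} follow by construction: the distinctness in (I) is enforced throughout, and the initial segment of length $K-3(a-1)+1=|J_0'|$ appearing in (II) and (III) is exactly the image of $J_0'$.

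The main obstacle is the pruning step, specifically the preservation of (III): the crucial observation is that within a single $\sim$-class at most one element can be coprime to $a$, which is what allows the count of distinct coprime-to-$a$ values to survive the reduction to distinct $\log_a$ values mod~$1$. A closely related subtlety is that hypothesis (III) of the corollary should be read as furnishing $a-1$ distinct coprime-to-$a$ values of $b_m$ (not merely $a-1$ indices with possibly repeated values); this is the natural reading consistent with the distinctness built into condition (I) of Theorem~\ref{completenesstheorem1fin}, and it is exactly what the pruning argument requires in order to inherit condition (III) of the finitary theorem.
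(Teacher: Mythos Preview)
The paper does not supply an explicit proof of this corollary; it is presented as an immediate infinitary consequence of Theorem~\ref{completenesstheorem1fin}. Your argument---extracting a finite subcollection, pruning to enforce distinctness mod~$1$ while preserving the gcd and coprimality conditions, then padding with $3(a-1)$ extra indices so that the initial block carries conditions (II) and (III)---is exactly the intended deduction, and the details are sound.

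Your closing caveat is not merely a technicality but a genuine correction: read literally as a count of indices, hypothesis~(III) of the corollary is too weak. For instance, with $a=3$, $b_0=b_1=1$, and $b_m=3\cdot 5^m$ for $m\ge 2$, all three hypotheses hold, yet for every $N$ the set $A$ of \eqref{completeness1fin} satisfies $A\setminus\{1\}\subset 3\N$, so $\FS(A)$ omits every integer congruent to $2$ modulo $3$. Thus (III) must indeed be read as supplying $a-1$ \emph{distinct} values $b_m$ coprime to $a$, and under that reading your proof goes through.
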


Although the set $A$ defined by \eqref{completeness1fin} of Theorem \ref{completenesstheorem1fin} is not a semigroup, it contains the semigroup $\Gamma(a)$, and indeed can be decomposed as the product of $\Gamma(a)$ with the finite set $\{b_m : m = 0,\ldots,N\}$. This multiplicative structure is used somewhat as a substitute for the semigroup property in the proof of Theorem \ref{completenesstheorem1fin}. It is interesting to ask how much this multiplicative structure can be weakened without losing completeness. For example, is the decomposition of the set as the product of two ``nice'' sets enough? The following example shows that even in the best of circumstances (short of the semigroup property in one of the factors), a single product decomposition is not enough to guarantee completeness:

\begin{example}
\label{exampleanbm}
Fix $a,b\geq 2$. Then the set
\[
A = \{a^{n^2} b^{m^2} : n,m\in\Neur\}
\]
is not complete. Indeed, an analysis of growth rates (cf. \6\ref{proofanbm}) shows that the set $\FS(A)$ has density zero.
\end{example}

To counteract the phenomenon in this example, we can include more multiplicative structure by increasing the number of factors allowed without changing their form. For example, given a finite sequence $(a_i)_1^s$, we can consider the set
\[
\{a_1^{n_1^2}\cdots a_s^{n_s^2} : n_1,\ldots,n_s\in\Neur\}.
\]
Our next theorem shows that if $s\geq 6$ and $(a_i)_1^s$ are pairwise coprime, then this set is complete. Let $\PP_\N$ denote the collection of nonconstant polynomials $P$ such that $P(\Neur)\subset\Neur$ and $P(0) = 0$. For each $k$, let $\PP_\N^k$ denote the collection of polynomials in $\PP_\N$ of degree $\leq k$.

\begin{theorem}
\label{completenesstheorem2}
For all $k\geq 2$ there exists $s = s_0(k)\in\N$ such that for all $a_1,\ldots,a_s\geq 2$ and $P_1,\ldots,P_s\in\PP_\N^k$, if
\begin{itemize}
\item[(I)] $\gcd(a_1,\ldots,a_s) = 1$, and
\item[(II)] $\log(a_1),\ldots,\log(a_s)$ are linearly independent over $\Q$,
\end{itemize}
then the set
\begin{equation}
\label{completeness2}
A_s := \left\{\prod_{i = 1}^s a_i^{P_i(n_i)} : n_1,\ldots,n_s\in\Neur\right\}
\end{equation}
is complete. Moreover, we may take $s_0(k)$ to satisfy
\begin{equation}
\label{s0bounds}
s_0(k) \sim 8k\log(k), \;\; s_0(2) = 6.
\end{equation}
\end{theorem}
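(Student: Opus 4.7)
The plan is to combine a general completeness criterion (presumably the technical workhorse established earlier in this paper, along the lines of Cassels' theorem but quantitatively sharper) with Freeman's theorem on Waring's problem for polynomials of degree $k$, which asserts that roughly $2k\log(k)$ variables suffice to represent every sufficiently large integer as $P_1(n_1)+\cdots+P_s(n_s)$. The bridge between Freeman's additive statement and the multiplicative presentation of $A_s$ is the logarithm: $\log\prod_i a_i^{P_i(n_i)} = \sum_i P_i(n_i)\log(a_i)$, and hypothesis (II) asserts exactly that $\log(a_1),\ldots,\log(a_s)$ are $\Q$-linearly independent.

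Concretely, I would first invoke the paper's completeness criterion to reduce to verifying two conditions on $A_s$: \emph{(a)} a quantitative largeness/equidistribution condition on a multiplicative scale, and \emph{(b)} a local condition that the image of $\FS(A_s)$ in $\Z/d\Z$ is eventually all of $\Z/d\Z$, for every $d\in\N$. Condition \emph{(b)} is the easier one: hypothesis (I) forces some $a_i$ to be coprime to any given prime $p$, and since $P_i$ is nonconstant with $P_i(0)=0$, the residues of $a_i^{P_i(n_i)}$ modulo $p^e$ already fill the group generated by $a_i$, which together with the other factors can be arranged to cover all of $(\Z/p^e\Z)$; Chinese Remainder then handles arbitrary $d$, and a short combinatorial lemma lifts the hitting-all-residues property from $A_s$ to $\FS(A_s)$.

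The main obstacle is condition \emph{(a)}. Here the plan is to use Freeman's theorem to produce, for every sufficiently large integer $m$, many representations $m=\sum_i P_i(n_i)$ with the $n_i$ constrained to prescribed intervals; combined with the $\Q$-linear independence of $\log(a_i)$ and a Kronecker--Weyl equidistribution argument, this shows that the corresponding products $\prod_i a_i^{P_i(n_i)}$ are well-distributed on a logarithmic scale in $\R_{>0}$, yielding the sublacunarity-with-margin that the completeness criterion demands. The enlarged constant $8$ in $s_0(k)\sim 8k\log(k)$, as opposed to Freeman's $2k\log(k)$, presumably reflects the overhead needed to simultaneously (i) fix the integer part of $\log(N)/\log(\text{scale})$, (ii) approximate $N$ multiplicatively to small relative error, and (iii) preserve distinctness of summands within $\FS(A_s)$, each of which costs a factor close to $2$.

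Finally, for the base case $s_0(2)=6$, I would avoid the asymptotic Freeman machinery and argue directly: classical results on representations as sums of squares (e.g.\ Lagrange and its variants), together with a small buffer to ensure distinctness and to absorb the multiplicative approximation error furnished by the Kronecker step for $\log(a_1),\ldots,\log(a_6)$, should be tight enough to give the explicit value $6$. Tracking the interplay between the multiplicative structure $\prod_i a_i^{P_i(n_i)}$ and the additive structure of $\FS$ with this level of quantitative control is, I expect, the technically hardest part.
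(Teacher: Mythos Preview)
Your overall architecture is right: Freeman plus the paper's completeness criterion. But there is a real gap in how you invoke Freeman, and your accounting for the constant $8$ and for $s_0(2)=6$ is off.

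The Freeman result used here is \emph{not} the integer Waring statement ``every large integer is $\sum_i P_i(n_i)$''. It is the Diophantine-inequality version for real polynomials: if $h_1,\ldots,h_s\in\PP_\R^k$ satisfy an irrationality condition on their nonconstant coefficients, then for every $\epsilon>0$ and every large real $M$ one can solve $|\sum_i h_i(z_i)-M|\le\epsilon$. The paper applies this \emph{directly} to the polynomials $h_i=\log(a_i)P_i$; hypothesis (II) is exactly what makes the irrationality condition hold. This immediately yields that $\{\prod_i a_i^{P_i(n_i)}\}$ is sublacunary, with no separate Kronecker--Weyl step. Your proposed route---first hit integers with $\sum P_i(n_i)$ and then equidistribute---does not obviously assemble into control of $\sum_i P_i(n_i)\log(a_i)$, since that quantity is not a function of the integer sum alone.

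The constant bookkeeping is accordingly different. Freeman gives $s_1(k)\sim 4k\log(k)$ (not $2k\log k$) for positive-definite real polynomials. One then substitutes $x\mapsto x^2+1$ to force inputs into $\N$, doubling the degree and giving $s_2(k)\sim 8k\log(k)$. Finally $s_0(k)=s_2(k)+1$: the extra variable $a_{s+1}^{P_{s+1}(\cdot)}$ is peeled off so that the sublacunary block $B=C_1$ and the set $C\supset C_1C_2$ are disjoint (needed for the Main Theorem), and so that Remark~\ref{remarkAI} applies to give \eqref{casselsassumption2}. Your three ``factors of $2$'' explanation is not what is happening.

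For $s_0(2)=6$, Lagrange-type four-squares results will not do the job: one needs the \emph{real} statement that five indefinite/diagonal quadratic forms in the $\log(a_i)P_i$ suffice for $\epsilon$-approximation, which comes from G\"otze's theorem on values of quadratic forms. That gives $s_2(2)=5$, hence $s_0(2)=6$.

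Lastly, the residue condition \eqref{casselsassumption3} is simpler than you make it: by (I), any prime $p\mid q$ misses some $a_i$, so the single-factor elements $a_i^{P_i(n)}$ already give infinitely many elements with $\gcd(\cdot,q)<q$, and Remark~\ref{remarkqinduction3} finishes. No Chinese Remainder gymnastics are needed.
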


\begin{remark}
\label{remarks0bounds}
In addition to the upper bounds \eqref{s0bounds}, we can also give the following lower bounds:
\[
s_0(k) \geq k, \;\; s_0(2) \geq 3.
\]
Both of these bounds follow from growth rate calculations; see \6\ref{proofs0bounds} for the first bound and \6\ref{proofanbm} for the second bound. It seems like a difficult problem to give better bounds on the function $s_0$.
\end{remark}

\begin{remark}
The general theorem which we use to prove our completeness results (i.e. Theorem \ref{maintheorem} below) is somewhat similar to a theorem of Cassels \cite{Cassels2}, about which we will say more later. While Cassels' result is not strong enough to deduce Theorem \ref{completenesstheorem1fin}, its corollaries, or the theorems which we state below, it is strong enough to prove Theorem \ref{completenesstheorem2} (possibly with a worse value of $s_0(k)$) via the theorem of Freeman mentioned above. We omit the details of this derivation, as the proof of Theorem \ref{completenesstheorem2} we give will be based on our own main theorem.
\end{remark}

Our next result is a generalization of a theorem of Zannier \cite{Zannier}. Zannier observed that Cassels' aforementioned result implies that if $P$ is a polynomial function (possibly with real coefficients), then the set
\begin{equation}
\label{prezannier}
A = \{\lfloor P(n) \rfloor : n\in\N\}
\end{equation}
is complete as long as $\gcd(A) = 1$.\Footnote{We remark that when $k = \gcd(A) > 1$, then $A = kB$ for some set $B$ of the same form as $A$ which satisfies $\gcd(B) = 1$. Consequently, $\FS(A)$ is cofinite in $k\N$.} He then used elementary methods to prove another completeness theorem which implies this statement. We are now able to generalize Zannier's theorem as follows:

\begin{theorem}
\label{theoremzannier}
Let $A$ be a sublacunary set, and suppose that there exist $z_1,\ldots,z_k\in\Z$ and $b\in\N$ such that
\begin{itemize}
\item[(I)] for all $N\in\N$, there exist $x_1,\ldots,x_k \in A$ such that $x_i \geq N \all i \leq k$ and
\begin{equation}
\label{zannier}
0 < \left|\sum_{i = 1}^k z_i x_i \right| \leq b;
\end{equation}
\item[(II)] for all $q = 2,\ldots,b$, $\FS(A)$ intersects every arithmetic progression of the form $q\N + i$ $(0 \leq i < q)$.
\end{itemize}
Then $A$ is complete.
\end{theorem}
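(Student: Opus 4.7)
The plan is to first establish that $\FS(A)$ is syndetic in $\N$, and then to upgrade syndeticity to cofiniteness using hypotheses (I) and (II) together with sublacunarity. By the Burr--Erd\H{o}s lemma (Lemma~\ref{lemmasyndetic}; cf.\ Remark~\ref{remarksyndetic}), sublacunarity of $A$ already yields that $\FS(A)$ is syndetic, say with syndeticity constant $s$. This baseline is what all subsequent steps try to promote.

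Introduce the ``shift subgroup''
\[
D := \{\, d\in\Z \,:\, \forall N \ \exists \text{ disjoint finite } F_1,F_2\subseteq A\cap[N,\infty) \text{ with } \Sigma(F_1)-\Sigma(F_2)=d\,\}.
\]
It is routine that $D$ is a subgroup of $\Z$ (swap $F_1,F_2$ to negate; place a second pair above the first to add), so $D = q\Z$ for some $q\geq 0$. The real work is to use hypothesis~(I) to force $1\leq q\leq b$. Given the relation $\sum z_i x_i = r\in [-b,b]\setminus\{0\}$ with arbitrarily large $x_i\in A$, sublacunarity lets us replace each occurrence of $x_i$ appearing with multiplicity $|z_i|$ by $|z_i|$ distinct nearby elements of $A$, producing honest disjoint finite sets $F_1,F_2\subseteq A$ whose sum-difference equals $r$ up to a bounded error $E$. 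This error must then be cancelled by incorporating an auxiliary finite sum of elements of $A$ realizing $\pm E$ exactly; such corrections are furnished by syndeticity of $\FS(A)$ combined with (II), which supplies $\FS$-elements in every residue class modulo integers $\leq b$. The conclusion is that $D$ contains a nonzero integer of absolute value at most $b$, so $q\leq b$.

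To finish, note that hypothesis~(II) ensures $\FS(A)$ meets every residue class modulo $q$. I would show that $\FS(A)$ is cofinite in each class $q\Z+i$ individually, and then combine over the $q$ classes. Given a large target $n\equiv i\pmod q$, build a representation $n=\Sigma(F)$ as follows: start from a nearby representative $m\in \FS(A)\cap(q\Z+i)$ with $m$ close to $n$ (furnished by syndeticity within the class), and then ``walk'' from $m$ to $n$ by successively applying the $q$-shifts from the previous step, each time using fresh disjoint far-out elements of $A$ (which exist in abundance by sublacunarity) so that the supporting finite subset of $A$ stays well-defined.

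The main obstacle is the conversion step in the construction of $D$: turning the formal identity $\sum z_i x_i = r$, whose coefficients may exceed $1$ in absolute value, into an \emph{exact} disjoint-sum equality $\Sigma(F_1)-\Sigma(F_2)=r$ with $F_1\cap F_2=\emptyset$. The mismatch between integer coefficients and the ``distinct summands'' requirement of $\FS$ is bridged by sublacunarity, but controlling the residual error to the integer $0$ (rather than merely asymptotically) is delicate and is exactly where the arithmetic-progression hypothesis~(II) and the syndeticity from the first step are indispensable. A secondary bookkeeping difficulty is arranging the iterated shifts in the final step so that their supporting sets remain pairwise disjoint from the base representation; this however should be handleable by pushing each realization of $q\in D$ sufficiently far out.
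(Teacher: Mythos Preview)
Your approach is genuinely different from the paper's, but both of your flagged steps contain real gaps. In the conversion step, sublacunarity gives $n_{k+1}/n_k\to 1$, hence $n_{k+1}-n_k=o(n_k)$, but this difference is typically \emph{unbounded}; replacing $|z_i|$ copies of $x_i$ by nearby distinct elements therefore introduces an error $E$ that grows with $N$, not a bounded one. Hypothesis~(II) only controls residues modulo $q\le b$, and syndeticity of $\FS(A)$ only produces representations up to an additive error $\le s$; neither yields an exact integer correction of an unbounded $E$, so you cannot conclude $D\cap[1,b]\neq\emptyset$ this way. In the walk, even granting $q\in D$, from $m=\Sigma(G)\in\FS(A)$ and disjoint $F_1,F_2\subset A\cap(\max G,\infty)$ with $\Sigma(F_1)-\Sigma(F_2)=q$ you obtain $\Sigma(G\cup F_1)=m+q+\Sigma(F_2)$, not $m+q$; since $\FS$ forbids subtraction and $F_2\not\subset G$, there is no mechanism to land on $m+q$. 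What you would need is that $\FS(A\cap[N,\infty))$ eventually contains $q\N$, which is essentially the statement to be proved.

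The paper avoids both obstacles by a different route. It sets $C=\{x_j^{(n)}:j\le k,\;n\in\N\}\cup F$ (with $F$ a finite witness for~(II)) and $B=A\setminus C$, keeps $B$ sublacunary, and applies the Main Theorem via Corollary~\ref{mainlemmacorollary}. The key observation is Diophantine rather than combinatorial: from $0<|\sum_i z_i x_i^{(n)}|\le b$ one gets, for each irrational $\alpha$, that $\|\sum_i z_i x_i^{(n)}\alpha\|\ge\min_{1\le\ell\le b}\|\ell\alpha\|>0$, so some $\|x_{j_n}^{(n)}\alpha\|$ is bounded below and \eqref{casselsassumption2} holds; and for $q>b$ the relation forces $\sum_i z_i x_i^{(n)}\notin q\Z$, hence some $x_{j_n}^{(n)}\notin q\Z$, giving \eqref{casselsassumption3} via Remark~\ref{remarkqinduction3} (with $F$ handling $q\le b$). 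The Main Theorem then finishes, its own proof resting on the Bergelson--Furstenberg--Weiss theorem (Lemma~\ref{lemmaBFW}) that a sum of syndetic sets is piecewise Bohr --- this structural input is what replaces your attempted walk.
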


We now state Zannier's result and deduce it as a corollary of Theorem \ref{theoremzannier}:

\begin{corollary}[Main theorem of \cite{Zannier}]
\label{corollaryzannier}
Let $A$ be a sublacunary set and let $(x(i))_1^\infty$ be its unique increasing indexing, and suppose that there exist $z_1,\ldots,z_\ell\in\Z$ and $b\in\N$ such that
\begin{itemize}
\item[(I)] there exists $c > 0$ such that for all $N\in\N$, there exist $\ell$-tuples $(i_1,\ldots,i_\ell)$ and $(j_1,\ldots,j_\ell)$ such that $i_m \geq j_m \geq N \all m\leq \ell$, and the following hold:
\begin{align} \tag{$\alpha$}
&x(i_m)/x(j_m) \to 1 \all m \leq \ell \text{ as $N\to\infty$} \\ \tag{$\beta$}
&x(i_\ell) \leq c x(i_1) \\ \tag{$\gamma$}
&0 < \left|\sum_{m = 1}^\ell z_m (x(i_m) - x(j_m)) \right| \leq b;
\end{align}
\item[(II)] for all $q = 1,\ldots,b$, $\FS(A)$ intersects every arithmetic progression of the form $q\N + i$ ($0 \leq i < q$).
\end{itemize}
Then $A$ is complete.
\end{corollary}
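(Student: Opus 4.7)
The plan is a direct change-of-variables reduction to Theorem~\ref{theoremzannier}. The key observation is that each term $z_m(x(i_m)-x(j_m))$ appearing in condition~($\gamma$) is already a $\Z$-linear combination of two elements of $A$, so an $\ell$-term ``difference'' expression unfolds into a $2\ell$-term linear combination of elements of $A$. I will therefore invoke Theorem~\ref{theoremzannier} with $k := 2\ell$ and the doubled coefficient sequence $(z_1,\ldots,z_\ell,-z_1,\ldots,-z_\ell)$.

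Concretely, for each $N \in \N$, first choose $N' \in \N$ large enough that $x(N') \geq N$, which is possible since $(x(i))_1^\infty$ is a strictly increasing sequence in $\N$. Apply hypothesis~(I) of the corollary with $N'$ in place of $N$ to produce tuples $(i_m)_{m=1}^\ell$, $(j_m)_{m=1}^\ell$ with $i_m \geq j_m \geq N'$, and set
\[
z'_m := z_m,\quad z'_{m+\ell} := -z_m,\quad x_m := x(i_m),\quad x_{m+\ell} := x(j_m) \qquad (1 \leq m \leq \ell).
\]
Then $x_s \in A$ with $x_s \geq x(N') \geq N$ for every $s = 1,\ldots,2\ell$, and
\[
\sum_{s=1}^{2\ell} z'_s x_s \;=\; \sum_{m=1}^\ell z_m\bigl(x(i_m) - x(j_m)\bigr),
\]
whose absolute value lies in $(0,b]$ by~($\gamma$). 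This verifies hypothesis~(I) of Theorem~\ref{theoremzannier}. Hypothesis~(II) transfers at once: the $q = 1$ instance of the corollary's~(II) is vacuous (since $\FS(A) \subseteq \N$), so the corollary's~(II) and the theorem's~(II) coincide. Sublacunarity of $A$ is a hypothesis of both statements, and thus Theorem~\ref{theoremzannier} applies and delivers completeness.

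The derivation has essentially no technical obstacle; all the real content is absorbed into Theorem~\ref{theoremzannier}. It is worth flagging that the ratio conditions~($\alpha$) (``$x(i_m)/x(j_m) \to 1$'') and~($\beta$) (``$x(i_\ell) \leq c\,x(i_1)$'') play no role whatsoever in the argument above; they are artefacts of Zannier's original formulation. The closest thing to a subtlety is the sanity check that Theorem~\ref{theoremzannier} extracts from the tuples only the boundedness and nonvanishing of the linear combination, not the finer growth-rate comparisons encoded in~($\alpha$) and~($\beta$); once this is observed, the reduction is purely formal, and it would be natural to remark in the proof that~($\alpha$) and~($\beta$) are retained only to match Zannier's statement verbatim.
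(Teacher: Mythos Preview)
Your reduction is correct and is exactly the paper's argument: set $k = 2\ell$, double the coefficient list to $(z_1,\ldots,z_\ell,-z_1,\ldots,-z_\ell)$, and read $(\gamma)$ as the required inequality \eqref{zannier}. Your added care in passing from the index lower bound $i_m,j_m \geq N'$ to the value lower bound $x_s \geq N$ is a legitimate detail the paper glosses over, and your observation that $(\alpha)$ and $(\beta)$ are unused matches the paper's own remark immediately following the proof.
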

\begin{proof}
Let $k = 2\ell$, $z_{\ell + m} = -z_m$ ($m = 1,\ldots,\ell$), $x_m = x(i_m)$, $x_{\ell + m} = x(j_m)$ ($m = 1,\ldots,\ell$) in Theorem \ref{theoremzannier}.
\end{proof}

Actually, this proof shows that in Corollary \ref{corollaryzannier}, the conditions $(\alpha)$ and $(\beta)$ are both unnecessary.

Another application of Theorem \ref{theoremzannier} is that it is used in the proof of the following result:

\begin{theorem}
\label{theoremnalpha}
Fix $k$ and let $f:\N\to\N$ be a function whose $k$th difference $\Delta^k f$ is bounded, where
\[
\Delta f(n) = f(n + 1) - f(n).
\]
Then if
\[
A = \{\lfloor f(n)\rfloor : n\in\N\},
\]
then $\FS(A)$ contains an arithmetic progression.
\end{theorem}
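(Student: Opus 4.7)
Our plan is to apply Theorem~\ref{theoremzannier} to a ``stabilized'' subset of $A$. Set $d_M := \gcd(A \cap [M, \infty))$; this sequence is weakly increasing in $M$ and bounded above by any single element of $A$, hence stabilizes at some value $d$. Choose $M_0$ with $d_{M_0} = d$, set $B := A \cap [M_0, \infty)$, and $B' := B/d$. Then $B'$ enjoys the \emph{strong coprimality} property $\gcd(B' \cap [M, \infty)) = 1$ for every $M \in \N$. Since $\FS(A) \supseteq \FS(B) = d \cdot \FS(B')$, it suffices to show $B'$ is complete: then $\FS(A)$ is cofinite in $d\N$, which is an arithmetic progression. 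Iterated summation of the bound $|\Delta^k f| \leq C$ yields $f(n) = O(n^k)$, so $B'$ is sublacunary.

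Condition~(I) of Theorem~\ref{theoremzannier} is verified via the identity $\Delta^k f(n) = \sum_{i=0}^{k} (-1)^{k-i} \binom{k}{i} f(n+i)$. Since $\Delta^k f$ is integer-valued and bounded, the pigeonhole principle produces some $v \in \Z$ attained infinitely often. If $v \neq 0$, then for all large $n$ with $\Delta^k f(n) = v$, setting $z_i = (-1)^{k-i}\binom{k}{i}$ and $x_i = f(n+i)/d$ gives arbitrarily large elements of $B'$ with $|\sum z_i x_i| = |v|/d$ (a positive integer, since $d \mid f(n+i)$ for $n$ large); condition~(I) holds with $b := |v|/d$. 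If $v = 0$ is the only value attained infinitely often, then $\Delta^k f$ is eventually zero, so $f$ is eventually a polynomial of degree $<k$, and we replace $k$ with this smaller degree and iterate (terminating either with a successful application or with $f$ eventually constant, in which case $A$ is finite and the conclusion is trivial).

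The principal obstacle is condition~(II): for every $q \in \{2, \ldots, b\}$, $\FS(B')$ must meet every residue class modulo $q$. Let $R_\infty := \{r \in \Z/q\Z : B' \cap (r + q\Z) \text{ is infinite}\}$; it is nonempty because $B'$ is infinite. If the integer $d' := \gcd(R_\infty \cup \{q\})$ (viewing $R_\infty$ in $\{0, 1, \ldots, q-1\} \subset \Z$) were greater than $1$, then all but finitely many elements of $B'$ would lie in residues of $R_\infty$ and thus be divisible by $d'$, violating strong coprimality. So $d' = 1$, meaning that $R_\infty$ additively generates $\Z/q\Z$. Every residue $r$ may therefore be written as $r \equiv \sum_{i} c_i r_i \pmod q$ with $r_i \in R_\infty$ and $c_i \in \{0, 1, \ldots, q-1\}$, and for each $r_i$ we may choose $c_i$ distinct elements of $B' \cap (r_i + q\Z)$ (arranging the choices to be disjoint across $i$); their total lies in $\FS(B')$ with residue $r \pmod q$. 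Theorem~\ref{theoremzannier} now yields the completeness of $B'$, and hence the desired arithmetic progression in $\FS(A)$.
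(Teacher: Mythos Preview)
Your overall strategy coincides with the paper's: apply Theorem~\ref{theoremzannier} with the alternating binomial weights $z_i=(-1)^{k-i}\binom{k}{i}$ coming from the $k$th difference, and fall back to the polynomial case when the $k$th difference eventually vanishes. Your passage to $B'=(A\cap[M_0,\infty))/d$, with $d$ the stabilized tail-gcd, so as to force condition~(II), is more explicit than the paper's proof, which simply writes ``by Theorem~\ref{theoremzannier} we are done'' and leaves the divide-by-gcd reduction (and the weakening from ``complete'' to ``contains an arithmetic progression'') to the reader.

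There is, however, a genuine gap in your check of condition~(I). You need $x_i=f(n+i)/d$ to lie in $B'$ and to exceed any prescribed $N$; this requires $f(n+i)\geq M_0$ (indeed $\geq dN$) at the chosen $n$. But nothing in the hypotheses forces $f\to\infty$: with bounded $\Delta^k f$, the function $f$ may oscillate and revisit small values infinitely often (e.g.\ $f(n)=\lfloor n^2\sin^2\sqrt{\log n}\rfloor+1$ has bounded second difference yet $\liminf f=1$). Thus ``for all large $n$ with $\Delta^k f(n)=v$'' need not yield large $x_i$, and in particular your parenthetical ``$d\mid f(n+i)$ for $n$ large'' is unjustified, so $d\mid v$ is not automatic. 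Your dichotomy also misses a case: some nonzero $v$ may recur, yet only at indices $n$ where $\min_i f(n+i)$ stays bounded. In that situation $\Delta^k f$ vanishes on every interval where $f$ remains above the bound, so $f$ agrees with a (possibly different) polynomial of degree $<k$ on each such ``high excursion''; one must then pigeonhole on the bounded $(k-1)$st differences across excursions and only then iterate to lower degree. (The paper's terse proof glosses over the same point.) A smaller matter: ``$f(n)=O(n^k)$, so $B'$ is sublacunary'' does not follow as written---an upper bound on $f$ says nothing about multiplicative gaps in its image $A=f(\N)$; you need the full chain $|\Delta^j f(n)|=O(n^{k-j})$ together with $f\geq 1$ to exclude such gaps.
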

For example, we could take $f(x) = x^\alpha$, where $\alpha > 0$ is an irrational number. Note that if $f:(0,\infty)\to (0,\infty)$ is a $C^k$ function whose $k$th derivative is bounded, then $\Delta^k f$ is also bounded.

Another way to generalize Zannier's result is to consider the images of ``sufficiently large'' sets under polynomial mappings. It turns out that the lower bound on the size of the set of primes guaranteed by the prime number theorem is enough to show that the image of the set of primes under any arithmetically appropriate polynomial mapping is complete. We phrase this result more generally as follows:

\begin{theorem}
\label{theoremdPD}
Fix $d\in\N$ and $P\in\P_\N^d$, let $D$ be a sublacunary set such that
\begin{equation}
\label{casselsassumption2mod}
\liminf_{N\to\infty} \frac1{N^{1 - \delta}} \#(D\cap [1,N]) > 0.
\end{equation}
where $\delta = 1/\left[1 + 2\binom{d + 1}2\right]$. Let $A = P(D)$, and assume that for all $q\geq 2$,
\begin{equation}
\label{casselsassumption3mod}
\#\{n\in A : q \not\divides n\} = \infty.
\end{equation}
Then $A$ is strongly complete.
\end{theorem}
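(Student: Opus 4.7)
The strategy is to apply Theorem~\ref{theoremzannier} to $A = P(D)$. Note first that $A$ is sublacunary: since $D = \{d_1 < d_2 < \cdots\}$ satisfies $d_{k+1}/d_k \to 1$ and $P$ is a polynomial whose leading term dominates, $P(d_{k+1})/P(d_k) \to 1$ as well. Both hypotheses \eqref{casselsassumption2mod}--\eqref{casselsassumption3mod} are asymptotic and survive the removal of finitely many elements of $D$, so the completeness furnished by Theorem~\ref{theoremzannier} will automatically be strong.

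Condition (II) of Theorem~\ref{theoremzannier} is routine. Applying \eqref{casselsassumption3mod} prime-by-prime shows that $\gcd(A) = 1$; more generally, for every $q \geq 2$ the additive subgroup of $\Z/q\Z$ generated by $A$ is all of $\Z/q\Z$. Since $A$ is infinite, for any $q$ and any residue $i \pmod q$ one can produce a finite $F \subset A$ with $\Sigma(F) \equiv i \pmod q$ and $\Sigma(F)$ arbitrarily large, by padding with a disjoint finite subset of $A$ summing to a large multiple of $q$.

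The substance of the proof is condition (I). Writing $P(x) = \sum_{j=1}^d c_j x^j$ and $x_i = P(n_i)$ with $n_i \in D$, we have
\[
\sum_{i=1}^k z_i P(n_i) \;=\; \sum_{j=1}^d c_j\, T_j(\mathbf n), \qquad T_j(\mathbf n) := \sum_{i=1}^k z_i n_i^j,
\]
so it suffices to find $\mathbf n \in D^k$ with all coordinates arbitrarily large and $|T_j(\mathbf n)| \leq C$ for each $j = 1,\dots,d$. The sum of the degrees of this system is $\sum_{j=1}^d j = \binom{d+1}{2}$, and the exponent $\delta = 1/\bigl(1 + 2\binom{d+1}{2}\bigr)$ in \eqref{casselsassumption2mod} is precisely the density threshold provided by a theorem of Freeman (the density-sensitive extension of Birch's theorem on forms in many variables alluded to earlier in the paper), which for $k = k(d)$ sufficiently large guarantees abundant solutions $\mathbf n \in D^k$ above any prescribed height.

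The main obstacle is to choose $(z_i) \in \Z^k$ ensuring simultaneously the solubility hypothesis needed to invoke Freeman's theorem \emph{and} the nonvanishing $\sum_i z_i P(n_i) \ne 0$ required by Theorem~\ref{theoremzannier}(I). A natural choice is $k = 2m$ with $(z_i) = (+1,\dots,+1,-1,\dots,-1)$, making the system $T_j = 0$ the classical Prouhet--Tarry--Escott problem, which admits nontrivial rational solutions for $m > d$ and so guarantees the required local solubility at every place. Freeman's theorem then furnishes many $\mathbf n \in D^k$ with $|T_j(\mathbf n)| \leq C$ for all $j$; the exceptional locus $\{\mathbf n : \sum_i z_i P(n_i) = 0\}$ is a proper algebraic hypersurface and so excludes only a negligible fraction of these solutions, leaving infinitely many $\mathbf n$ with $0 < |\sum_i z_i P(n_i)| \leq b$ where $b := C \sum_j |c_j|$. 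This verifies condition (I), and Theorem~\ref{theoremzannier} then delivers the desired strong completeness of $A$.
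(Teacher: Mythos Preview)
Your proposal has a genuine gap at its core. You invoke ``a theorem of Freeman (the density-sensitive extension of Birch's theorem)'' to produce tuples $\mathbf n \in D^k$ with $|T_j(\mathbf n)| \leq C$ for all $j$, but no such result is available here: the Freeman theorem cited in this paper (Theorem~\ref{theoremfreeman}) approximates real targets by $\sum h_i(z_i)$ with $z_i \in \Z$, and says nothing about restricting the variables to an arbitrary set $D$ of size only $\asymp N^{1-\delta}$. Birch-type results for variables in sparse sets are extremely delicate, and the specific exponent $\delta = 1/\bigl(1 + 2\binom{d+1}{2}\bigr)$ does not arise from any such theorem---it comes from the elementary computation sketched below. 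Your claim that this $\delta$ is ``precisely the density threshold provided by'' Freeman is unsupported.

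There is also a structural problem with routing through Theorem~\ref{theoremzannier}: that theorem requires the coefficients $z_1,\ldots,z_k$ to be \emph{fixed} once and for all, independent of $N$. This forces you into the Prouhet--Tarry--Escott framework and then into the unavailable circle-method machinery above. The paper avoids this by letting the $z_i$ vary with the tuple. Concretely, Lemma~\ref{lemmaMbinom} takes any $d+1$ integers $n_0,\ldots,n_d$ with mutual gaps $\leq M$ and, via a Vandermonde determinant and Cramer's rule, produces integers $z_0,\ldots,z_d$ with $\max_i |z_i| \lesssim M^{\binom{d+1}{2}}$ and $0 < \bigl|\sum_i z_i P(n_i)\bigr| \lesssim M^{\binom{d+1}{2}}$. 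The density hypothesis~\eqref{casselsassumption2mod} guarantees many $(d+1)$-tuples in $D \cap [1,N]$ with gaps $M \lesssim N^\delta$, hence $\sum_i z_i P(n_i)$ is a nonzero integer of size $\lesssim N^{\delta\binom{d+1}{2}}$. Choosing $N$ so that this bound is below the denominator $q$ of a continued-fraction convergent to $\alpha$, one gets $\|\sum_i z_i P(n_i)\alpha\| \geq 1/(2q)$ and hence a uniform lower bound on $\sum_i \|P(n_i)\alpha\|$. This directly verifies condition~\eqref{casselsassumption2} of the Main Theorem (Lemma~\ref{lemmadPD}), and the proof concludes by applying that theorem rather than Theorem~\ref{theoremzannier}. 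The balancing of exponents $\delta\binom{d+1}{2} = (1-\delta)/2$ is exactly what pins down the value of~$\delta$.
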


Here a set is said to be \emph{strongly complete} if it remains complete after removing any finite subset.

\begin{corollary}
\label{corollarypolyprime}
Let $D$ denote the set of primes. Fix $d\in\N$ and $P\in\P_\N^d$ such that for all $q\geq 2$, \eqref{casselsassumption3mod} holds for $A = P(D)$. Then $A$ is strongly complete.
\end{corollary}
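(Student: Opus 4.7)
The plan is to deduce the corollary directly from Theorem \ref{theoremdPD} with $D$ taken to be the set of primes. Since the arithmetic condition \eqref{casselsassumption3mod} on $A = P(D)$ is part of the hypothesis of the corollary, it remains only to verify that $D$ itself satisfies the two assumptions of the theorem: sublacunarity, and the density lower bound \eqref{casselsassumption2mod}.

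Sublacunarity of the primes is classical. It follows, for example, from Bertrand's postulate together with standard refinements, or more directly from the prime number theorem, which implies $p_{k+1}/p_k \to 1$ as $k \to \infty$.

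For the density bound, we must check that
\[
\liminf_{N\to\infty} \frac{\pi(N)}{N^{1 - \delta}} > 0
\quad \text{for} \quad \delta = \frac{1}{1 + 2\binom{d+1}{2}}.
\]
Note that $\delta > 0$ for every $d \in \N$. By the prime number theorem---in fact only a Chebyshev-type lower bound $\pi(N) \gg N/\log N$ is needed---we have $\pi(N) \gg N/\log N$, and since $\delta > 0$ this dominates $N^{1-\delta}$, giving $\pi(N)/N^{1-\delta} \to \infty$. Thus \eqref{casselsassumption2mod} holds with plenty of room to spare.

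With both hypotheses verified, Theorem \ref{theoremdPD} applies and yields that $A = P(D)$ is strongly complete. There is no genuine obstacle in this derivation; the real work lies inside Theorem \ref{theoremdPD} itself, whose content is to supply a density exponent $\delta > 0$ robust enough to accommodate the primes. Indeed, the argument here would go through for any sublacunary set $D$ satisfying a lower bound of the form $\#(D\cap[1,N]) \gg N/\log N$ (or any $N^{1-\epsilon}$ with $\epsilon < \delta$), so the corollary applies equally well to, say, the values of any Chebotarev-type arithmetic sequence of comparable density.
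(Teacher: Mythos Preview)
Your proof is correct and follows essentially the same approach as the paper: verify that the set of primes is sublacunary and satisfies the density bound \eqref{casselsassumption2mod} via the prime number theorem, then invoke Theorem \ref{theoremdPD}. The paper's proof is a one-line appeal to the prime number theorem for both conditions, whereas you spell out the details a bit more (and correctly note that Chebyshev-type bounds suffice), but the substance is identical.
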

\begin{proof}
The prime number theorem guarantees that the set of primes is sublacunary and satisfies \eqref{casselsassumption2mod}.
\end{proof}

In particular, Corollary \ref{corollarypolyprime} reproves a result of Roth and Szekeres \cite[sequence (iii) on p.241]{RothSzekeres}. Moreover, letting $P(x) = x$ shows that the set of primes is strongly complete. This result can be compared to Goldbach's conjecture, in the sense that it states that any sufficiently large number can be written as the sum of (a possibly large number of) large primes, whereas Goldbach's conjecture claims that any number $\geq 4$ can be written as the sum of at most three primes.

Our last result regarding completeness is a generalization of a theorem of Burr, Erd\doubleacute os, Graham, and Li \cite{BEGL}. These authors propose a different way of weakening the semigroup property while keeping some multiplicative structure, by considering the completeness of unions of sets of the form $\Gamma(a)$. They go on to conjecture that for $S\subset\N\butnot\{1\}$ such that no two elements of $S$ are powers of the same integer,\Footnote{Although the authors of \cite{BEGL} do not state this assumption explicitly, it is necessary to translate between the language of ``sequences'' used in their paper (which seem to really be multisets) and the set-theoretic language used in this paper. If $a,a^2\in S$, then they seem to allow $a^{2n}$ and $(a^2)^n$ to appear as separate terms in a decomposition of an element of $\FS(S^\Neur)$ (which the authors of \cite{BEGL} denote $\mathrm{Pow}(S;0)$), whereas it is a consequence of our notation that we do not consider such decompositions legal.} the set
\[
S^\Neur = \bigcup_{a\in S} \Gamma(a)
\]
is strongly complete if and only if $\gcd(S) = 1$ and
\begin{equation}
\label{syndeticityequiv}
\sum_{a\in S} \frac{1}{a - 1} \geq 1.
\end{equation}
While we can neither prove nor disprove this conjecture, the following result generalizes the main theorem of \cite{BEGL}:

\begin{theorem}
\label{theoremBEGL}
Let $S_1,S_2,S_3,S_4\subset\N\butnot\{1\}$ be finite pairwise disjoint sets such that $\gcd(S_4) = 1$, and for each $i = 1,2,3$
\begin{equation}
\label{a1sum}
\sum_{a\in S_i} \frac{1}{a - 1} \geq 1.
\end{equation}
Then the set $A = S^\Neur$ is strongly complete, where $S = \bigcup_1^4 S_i$.
\end{theorem}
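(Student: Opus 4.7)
The plan is to apply the paper's main completeness tool Theorem~\ref{maintheorem} to the set $A = S^{\Neur}$. The four-block hypothesis is engineered precisely so that $\FS(A)$ acquires the ``thickness'' needed from the three density blocks $S_1, S_2, S_3$ and the ``modular coverage'' needed from the gcd block $S_4$. Since the $S_i$ are pairwise disjoint, $A$ decomposes as $\bigsqcup_{i=1}^{4} S_i^{\Neur}$, and hence
\[
\FS(A) \supset B_1 + B_2 + B_3 + B_4, \qquad B_i := \FS(S_i^{\Neur}),
\]
where the right side is a Minkowski sum. The first step is to show that $B_i$ is syndetic for each $i = 1, 2, 3$. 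This is essentially the underlying content of the main theorem of \cite{BEGL}: the critical threshold $\sum_{a\in S_i}\frac{1}{a-1} \geq 1$ is exactly the density condition forcing $\FS(S_i^{\Neur})$ to be syndetic. Consequently $B_1 + B_2 + B_3$ is also syndetic.

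The second step is to exploit $\gcd(S_4) = 1$ to show that $B_4$ meets every residue class modulo every $q \geq 2$. Since $\gcd(S_4, q) \mid \gcd(S_4) = 1$, the additive subgroup of $\Z/q\Z$ generated by $S_4 \bmod q$ is all of $\Z/q\Z$, so disjoint sums of distinct powers of elements of $S_4$ can be arranged to realize any chosen residue. Combined with the syndeticity of $B_1 + B_2 + B_3$, this shows that $\FS(A)$ meets every arithmetic progression of the form $q\Neur + r$ with $r < q$, which is precisely the ``modular'' hypothesis needed (condition (II) in Theorem~\ref{theoremzannier}, for instance).

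The third step, and the main technical obstacle, is to produce a bounded nonzero integer combination of arbitrarily large elements of $A$ itself. Syndeticity of $B_1 + B_2 + B_3$ yields two elements $m, m' \in B_1 + B_2 + B_3$ with $|m - m'|$ bounded and $m$ arbitrarily large; writing each as an explicit sum of distinct elements of $A$ and subtracting the two representations gives a bounded nonzero integer combination of elements of $A$ equal to $m' - m$. The difficulty is that the individual elements $x_i$ arising in this combination are not a priori large; to force $x_i \geq N$ for prescribed $N$, I would exploit the scaling property $a \cdot \Gamma(a) \subset A$ for each $a \in S$, multiplying a given representation through by a high power of some fixed $a \in S$ so that all participating elements are pushed above the threshold while the combination is preserved (up to a bounded rescaling of the right-hand side). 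The bookkeeping here is delicate because $\FS(A)$-representations are highly non-unique, and this is the step I expect to consume most of the work. Once the hypotheses of Theorem~\ref{maintheorem} are verified, completeness of $A$ follows; strong completeness is then automatic, since removing finitely many elements of $A$ preserves both the asymptotic syndeticity of each $B_i$ (an asymptotic property unaffected by finitely many exceptions) and the $\gcd$ condition on $S_4$ (which concerns the finite base set, not the infinitely many powers).
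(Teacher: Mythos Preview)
Your proposal conflates Theorem~\ref{maintheorem} with Theorem~\ref{theoremzannier}, and as a result misses the one hypothesis that carries all the analytic weight. In Theorem~\ref{maintheorem} the sets $B_1,B_2,B_3,C$ are subsets of $A$ itself (with $A=B_1\cup B_2\cup B_3\cup C$), not finite-sum sets; the natural choice here is $B_i=S_i^{\Neur}\setminus F$ and $C=S_4^{\Neur}\setminus F$ for an arbitrary finite $F$. The hypotheses to check are \eqref{casselsassumption1}, \eqref{casselsassumption2}, and \eqref{casselsassumption3}. Your ``first step'' (syndeticity of $\FS(S_i^{\Neur})$) is in fact the \emph{conclusion} of Lemma~\ref{lemmasyndetic} once \eqref{casselsassumption1} is known, and your ``second step'' is essentially \eqref{casselsassumption3}. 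But your ``third step'' --- producing a bounded nonzero integer combination $\sum z_i x_i$ with all $x_i$ large --- is the hypothesis of Theorem~\ref{theoremzannier}, not of Theorem~\ref{maintheorem}; and Theorem~\ref{theoremzannier} also requires $A$ to be sublacunary, which $S^{\Neur}$ need not be for finite $S$. Moreover, your scaling fix cannot work: multiplying a relation $\sum z_i x_i = c$ with $0<|c|\le b$ by $a^N$ gives $\sum z_i(a^N x_i)=a^N c$, which is no longer bounded, so you never obtain \eqref{zannier} with large $x_i$.

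What you are missing is condition \eqref{casselsassumption2}: for every irrational $\alpha$ one needs $\sum_{n\in C}\|n\alpha\|=\infty$. The paper handles this in one line via Remark~\ref{remarkAI}: for any $a\in S_4$ we have $C\supset a^{k+\Neur}\cdot a^{\Neur}$ (a weakly sublacunary set times an infinite set), which forces the series to diverge. Likewise \eqref{casselsassumption1} is verified directly from \eqref{a1sum} by the geometric-series estimate
\[
n\le \sum_{a\in S_i}\frac{n}{a-1}\le \sum_{a\in S_i}\frac{a^{m_a+1}}{a-1}\le \sum\{b\in B_i:b<n\}+O(1),
\]
with $m_a$ the largest exponent with $a^{m_a}<n$; there is no need to cite \cite{BEGL}. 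Finally \eqref{casselsassumption3} follows from $\gcd(S_4)=1$ via Remark~\ref{remarkqinduction3}. With all three verified for $A\setminus F$ and $F$ arbitrary, strong completeness is immediate. In short: drop the bounded-combination detour entirely and supply the Diophantine divergence \eqref{casselsassumption2} instead.
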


\begin{corollary}[Main theorem of \cite{BEGL}]
\label{corollaryBEGL}
Let $S\subset\N\butnot\{1\}$ be a set such that
\[
\limsup_{N\to\infty} \frac{1}{N}\#(S\cap [1,N]) > 0
\]
and $\gcd(S) = 1$. Then the set $A = S^\Neur$ is strongly complete.
\end{corollary}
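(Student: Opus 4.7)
The plan is to deduce Corollary~\ref{corollaryBEGL} directly from Theorem~\ref{theoremBEGL} by carving out of $S$ four finite pairwise disjoint subsets $S_1,S_2,S_3,S_4$ satisfying the hypotheses of that theorem, and then using monotonicity of $\FS$ and strong completeness.

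The first step is to extract $S_4$. Since $\gcd(S)=1$ by hypothesis, there exist finitely many elements of $S$ whose gcd is $1$; take $S_4\subset S$ to be such a finite set. Removing $S_4$ from $S$ leaves a set $S'=S\setminus S_4\subset\N\setminus\{1\}$ which still has positive upper density.

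The second step is to show $\sum_{a\in S'}1/(a-1)=\infty$, so that a greedy construction can produce $S_1,S_2,S_3$. Let $f(N)=\#(S'\cap[1,N])$ and pick $c>0$ and $N_1<N_2<\cdots$ with $f(N_k)\geq cN_k$ and $N_k\geq(4/c)N_{k-1}$; such a sequence exists by positive upper density. Then for each $k$,
\[
\sum_{a\in S'\cap(N_{k-1},N_k]}\frac{1}{a-1}\;\geq\;\frac{f(N_k)-f(N_{k-1})}{N_k-1}\;\geq\;c-\frac{N_{k-1}}{N_k-1}\;\geq\;\frac{c}{2},
\]
so the sum over disjoint windows diverges. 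Now greedily list the elements of $S'$ in increasing order and take the shortest initial segment whose reciprocals-minus-one sum to at least $1$; this gives a finite set $S_1$. Discard $S_1$ and repeat to obtain $S_2$, then $S_3$. Each set is finite (because each term $1/(a-1)\leq 1$ is bounded), and by construction they are pairwise disjoint and disjoint from $S_4$.

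The final step is to apply Theorem~\ref{theoremBEGL} to the four sets $S_1,S_2,S_3,S_4$: all hypotheses hold, so $T:=\bigcup_{i=1}^4 S_i\subset S$ satisfies that $T^{\N_0}$ is strongly complete. Since $T\subset S$, we have $T^{\N_0}\subset S^{\N_0}=A$; and since $A\supset T^{\N_0}$, every finite subset removed from $A$ still contains a cofinite tail of $T^{\N_0}$, so $\FS(A\setminus F)\supset \FS(T^{\N_0}\setminus(F\cap T^{\N_0}))$ is cofinite, yielding strong completeness of $A$. There is no real obstacle here; the only mildly delicate point is verifying that positive upper density (as opposed to the stronger positive lower density) is enough to force the divergence of $\sum 1/(a-1)$, which is handled by the dyadic-gap argument above.
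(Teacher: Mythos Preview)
Your proposal is correct and follows essentially the same approach the paper indicates: the paper's entire proof is the one-line remark that ``Corollary~\ref{corollaryBEGL} is deduced from Theorem~\ref{theoremBEGL} by decomposing the set $S$ appropriately, and throwing out an infinite component,'' and your argument is precisely a correct fleshing-out of that hint. One minor expository quibble: the reason each $S_i$ is finite is that the partial sums of the divergent series $\sum_{a\in S'}1/(a-1)$ eventually exceed $1$, not that the individual terms are bounded by $1$; but this does not affect the validity of the proof.
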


Corollary \ref{corollaryBEGL} is deduced from Theorem \ref{theoremBEGL} by decomposing the set $S$ appropriately, and throwing out an infinite component. However, Theorem \ref{theoremBEGL} applies in many circumstances where Corollary \ref{corollaryBEGL} does not apply; for example, Theorem \ref{theoremBEGL} applies to some finite sets $S$, whereas Corollary \ref{corollaryBEGL} applies only to infinite sets $S$. The hypotheses of Theorem \ref{theoremBEGL} are still significantly stronger than the conjectured \eqref{syndeticityequiv}, which is known to be the necessary and sufficient condition for $\FS(S^\Neur)$ to be syndetic. This illustrates the great difference between syndeticity and cofiniteness for sets of the form $\FS(A)$, at least in terms of our knowledge about them.

As another illustration of this difference, we include the following observation, which also offers a nice transition to our discussion of the dispersing condition:

\begin{proposition}
\label{completenesstheorem0fin}
Fix $a,b\geq 2$, not both powers of the same integer. Let $S\subset\N$ be a syndetic set and let $T\subset\N$ be a set of cardinality at least $a^m - 1$, where $m$ is the syndeticity constant of $S$. Then $\FS(a^S b^T)$ is syndetic.
\end{proposition}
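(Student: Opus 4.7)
Plan: The strategy is to combine the sublacunarity of $\Gamma(a,b)$ (Lemma IV.1 of \cite{Furstenberg3}) with the Burr--Erd\H{o}s lemma (Lemma~\ref{lemmasyndetic}), via a substitution argument that replaces elements of $\Gamma(a,b)$ by sums of elements of $a^S b^T$.

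First I would invoke Lemma~IV.1 of \cite{Furstenberg3} to conclude that $\Gamma(a,b)$ is sublacunary, and then Lemma~\ref{lemmasyndetic} to conclude that $\FS(\Gamma(a,b))$ is syndetic, with some absolute constant $C_0$. Given $N$ large, pick $M \in \FS(\Gamma(a,b))$ with $|N - M| \leq C_0$ and write $M = \sum_{i=1}^r a^{n_i} b^{p_i}$ as a sum of distinct elements of $\Gamma(a,b)$. For each summand, use the syndeticity of $S$ with constant $m$ to find $s_i \in S \cap [n_i, n_i+m]$, so that $a^{n_i} b^{p_i}$ and $a^{s_i} b^{p_i}$ differ by a multiplicative factor in $\{1, a, a^2, \ldots, a^m\}$. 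The cardinality hypothesis $|T| \geq a^m - 1$ then enters: it provides, at every multiplicative scale, enough distinct elements $b^t$ so that each of the $a^m$ possible correction factors can be realized as a subset sum, producing a representation of each original summand by distinct elements of $a^S b^T$ up to bounded additive error.

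The main obstacle is to carry out these summand-by-summand substitutions in a globally consistent manner: the chosen replacement elements across all summands must remain pairwise distinct (so that the final aggregate sum genuinely lies in $\FS(a^S b^T)$), and the accumulated error must stay bounded independently of $N$ even though the number of summands $r$ may grow logarithmically in $N$. I expect to overcome this by organizing the substitutions by multiplicative scale and arguing that corrections at well-separated scales do not interfere, so that at each single scale the bound $|T| \geq a^m - 1$ exactly suffices to cover all $a^m$ possible correction factors simultaneously.
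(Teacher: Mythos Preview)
Your approach takes an unnecessary detour through $\Gamma(a,b)$ and leaves the core step unresolved. In your substitution, replacing $n_i$ by $s_i\in S$ yields $a^{s_i}b^{p_i}$, but this is still not an element of $a^S b^T$, because the exponent $p_i$ need not lie in $T$; your ``correction factors'' $1,a,\ldots,a^m$ address only the $a$-exponent and say nothing about the $b$-exponent. To approximate a single summand $a^{n_i}b^{p_i}$ by a subset sum from $a^S b^T$ you would in effect have to run the Burr--Erd\H{o}s greedy algorithm on $a^S b^T$ itself, and that already presupposes the density property of $a^S b^T$ that is the entire content of the proposition. Even granting such an approximation for individual summands, the obstacles you flag --- error accumulated over $r\sim\log N$ summands and global distinctness of the replacements --- are not resolved by the ``scale separation'' heuristic you sketch.

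The paper bypasses all of this by applying Lemma~\ref{lemmasyndetic} \emph{directly} to $A=a^Sb^T$ rather than to $\Gamma(a,b)$. It verifies condition~\eqref{syndetic2} with $L=a^m-1$ in one line: for each $t\in T$ and each large $N$, syndeticity of $S$ furnishes $n\in S$ with $a^n b^t\in (N,a^m N]=(N,(L+1)N]$, and since $\log_a b$ is irrational (this is where the hypothesis ``not both powers of the same integer'' is used) these elements are pairwise distinct as $t$ ranges over $T$. Hence every such window contains at least $\#(T)\ge L$ elements of $A$, and Remark~\ref{remarksyndetic2} together with Lemma~\ref{lemmasyndetic} finishes the proof. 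No substitution, no error control, and no appeal to the sublacunarity of $\Gamma(a,b)$ are needed.
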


\subsection{Dispersing results}
The dispersing condition seems to be heuristically somewhat stronger than the completeness condition. While in Corollary \ref{completenesscorollary1} we were able to replace the sequence $(b^m)_0^\infty$ in the definition of $\Gamma(a,b)$ by any sequence of the form $(b^{k_m})_0^\infty$ such that $k_0 = 0$, getting a similar result regarding dispersing sets appears to require a condition on the sequence $(k_m)_0^\infty$. Our first result is that it is sufficent that the set $\{k_0,k_1,\ldots\}$ is piecewise syndetic. We recall the definition of this condition as well as some related definitions:

\begin{definition}
A set $S\subset\N$ is called \emph{thick} if it contains arbitrarily large intervals, and \emph{piecewise syndetic} if it is the intersection of a thick set with a syndetic set (cf. Remark \ref{remarksyndetic}). A set $S$ is called \emph{Bohr}\Footnote{Cf. Footnote \ref{footnoteinN}.} if there exist $d\in\N$, $\alpha\in \T^d = \R^d/\Z^d$, and an open set $\emptyset\neq U\subset \T^d$ such that
\[
\emptyset \neq \{n\in\N : n\alpha\in U\} \subset S.
\]
Finally, the intersection of a thick set with a Bohr set is called \emph{piecewise Bohr}.
\end{definition}

To state our results more concisely, it will help to introduce some new terminology regarding variants of the dispersing condition.
\begin{definition}
Fix $\epsilon > 0$. A set $A\subset\N$ is \emph{$\epsilon$-dispersing (resp. weakly dispersing)} if for every irrational $\alpha\in \T$, the set $A\alpha$ is $\epsilon$-dense (resp. somewhere dense\Footnote{A set is called \emph{somewhere dense} if it is not nowhere dense, i.e. if its closure contains a nonempty open set.}) in $\T$.
\end{definition}


\begin{theorem}
\label{dispersingtheorem0}
Fix $a,b\geq 2$ not both powers of the same integer. Let $S$ be a syndetic set and let $T$ be a piecewise syndetic set. Then the set
\[
a^S b^T
\]
is weakly dispersing.
\end{theorem}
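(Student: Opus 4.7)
The plan is to show that $D := \overline{a^S b^T \alpha}$ has nonempty interior in $\T$ for every irrational $\alpha$. Let $s_1$ be the syndeticity constant of $S$ and let $s_2$ be a constant such that for arbitrarily large $L$ there exist intervals $[N_L, N_L + L]$ in which $T$ is $s_2$-syndetic. Put $F := \{a^i b^j : 0 \le i \le s_1,\ 0 \le j \le s_2\}$, and for $g \in F$ let $\mu_g : \T \to \T$ denote multiplication by $g$ modulo $1$—a continuous open surjection of degree $g$. The strategy rests on the covering identity
\[
\T = \bigcup_{g \in F} \mu_g^{-1}(D).
\]
Once this is established, the finite family of closed sets $\mu_g^{-1}(D)$ covers $\T$, so Baire's theorem produces $g \in F$ with $\mu_g^{-1}(D)$ having nonempty interior $V$; then $\mu_g(V) \subseteq D$ is a nonempty open subset of $D$ (by the openness and surjectivity of $\mu_g$), giving the desired interior.

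To prove the covering, fix $x_0 \in \T$ and $\epsilon > 0$. Let $T^{**} := \bigcup_{j=0}^{s_2}(T - j)$, which is thick in $\N$: the $s_2$-syndeticity of $T$ on $[N_L, N_L + L]$ forces $[N_L, N_L + L - s_2] \subseteq T^{**}$. I will produce $(n_0, m_0) \in \N_0 \times T^{**}$ with $\|a^{n_0} b^{m_0}\alpha - x_0\| < \epsilon/(a^{s_1} b^{s_2})$ (see below); then the syndeticity of $S$ supplies $i \in [0, s_1]$ with $n_0 + i \in S$, and $m_0 \in T^{**}$ supplies $j \in [0, s_2]$ with $m_0 + j \in T$. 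Setting $(n, m) := (n_0 + i, m_0 + j) \in S \times T$, the identity $a^n b^m \alpha = a^i b^j \cdot a^{n_0} b^{m_0}\alpha$ places $a^n b^m\alpha$ within $\epsilon$ of $a^i b^j x_0$. Pigeonholing over the finitely many $(i, j)$ as $\epsilon \to 0$ then yields $a^i b^j x_0 \in D$ for some $(i, j) \in [0, s_1] \times [0, s_2]$, equivalently $x_0 \in \mu_{a^i b^j}^{-1}(D)$, which is what the covering asserts.

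The main technical step is showing that $\{a^n b^m \alpha : n \ge 0,\ m \in T^{**}\}$ is dense in $\T$—a strengthening of Furstenberg's theorem (Theorem \ref{theoremfurstenberg}) that restricts the exponent $m$ to the thick set $T^{**}$. My plan is to apply Furstenberg's theorem to the translates $\beta_L := b^{N_L}\alpha$ (each irrational since $\alpha$ is), and extract via compactness of $\T$ a convergent subsequence $\beta_{L_k} \to \beta^* \in \T$. When $\beta^*$ is irrational, Furstenberg on $\beta^*$ supplies a fixed pair $(n_0, m'_0) \in \N_0^2$ with $\|a^{n_0} b^{m'_0}\beta^* - x_0\| < \epsilon/2$; continuity of multiplication then transfers this to $\|a^{n_0} b^{m'_0 + N_{L_k}}\alpha - x_0\| < \epsilon$ for $k$ large enough that $\beta_{L_k}$ is sufficiently close to $\beta^*$ and $L_k \ge m'_0 + s_2$, placing $m'_0 + N_{L_k}$ in the required interval $[N_{L_k}, N_{L_k} + L_k - s_2] \subseteq T^{**}$. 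The principal remaining obstacle is the case of rational $\beta^*$, which I expect to require an additional careful argument, e.g., selecting the sequence $\{L_k\}$ so as to force $\beta_{L_k}$ to accumulate at an irrational limit.
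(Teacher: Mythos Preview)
Your overall architecture matches the paper's almost exactly: reduce to showing that $a^{\N_0} b^{T'}\alpha$ is dense for a thick set $T'$, pass to a convergent subsequence $b^{N_{L_k}}\alpha\to\beta^*$, invoke Furstenberg when $\beta^*$ is irrational, and then push the conclusion back from the ``thickened'' set to $a^S b^T$ via a finite union (your Baire/open-map argument is the same as the paper's ``since $A'=FA$ for a finite $F$, some $fA\alpha$ is somewhere dense'').

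The gap you flag is real, and your proposed fix does not work. There exist irrational $\alpha$ for which \emph{every} accumulation point of the full orbit $\{b^n\alpha\}_{n\ge0}$ is rational; for instance $\alpha=\sum_{k\ge1} b^{-2^k}$ has limit set $\{0\}\cup\{b^{-m}:m\ge1\}\subset\Q/\Z$. For such $\alpha$ no choice of the $N_{L_k}$ can produce an irrational $\beta^*$, so the rational case cannot be avoided by subsequence selection.

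The paper handles this case directly. After reducing (by multiplying through by the denominator) to $\beta^*=0$, i.e.\ $b^{n_k}\alpha\to0$, one exploits the hypothesis that $a,b$ are not powers of a common integer, which makes $\log_a b$ irrational. Given $\epsilon>0$, choose $k$ so that $\{0,\dots,k\}\log_a b$ is $\epsilon$-dense mod $1$; then $\{0,\dots,k\}\log_a b + S'$ is $\epsilon$-dense in a right half-line. Picking $\ell\ge k$ with $\|b^{n_\ell}\alpha\|$ small enough and exponentiating (using that $x\mapsto a^x$ is Lipschitz on bounded intervals) gives that $a^{S'} b^{\{n_\ell,\dots,n_\ell+k\}}\alpha$ is $O(\epsilon)$-dense in $\T$. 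Since $\epsilon$ is arbitrary, density follows. This is the missing ingredient in your argument.
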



Since the product of an infinite subset of $\N$ with a nonempty open subset of $\T$ is equal to $\T$, the product of an infinite set with a weakly dispersing set is dispersing. Thus we deduce the following corollary:

\begin{corollary}
\label{dispersingcorollary0}
Fix $a,b\geq 2$ not both powers of the same integer. Let $S$ be a syndetic set, let $T$ be a piecewise syndetic set, and let $I$ be an infinite set. Then the set
\[
a^S b^T I
\]
is dispersing.
\end{corollary}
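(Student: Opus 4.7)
The plan is essentially to formalize the one-sentence hint immediately preceding the corollary. Let $A = a^Sb^T$, so that the set we care about is $AI$, and fix an arbitrary irrational $\alpha \in \T$; the goal is to show that $AI\alpha$ is dense in $\T$. By Theorem \ref{dispersingtheorem0}, $A$ is weakly dispersing, so $A\alpha$ is somewhere dense, i.e. its closure contains a nonempty open subset of $\T$. Such an open set contains an open arc $U\subset\T$ of some positive length $\ell > 0$.

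Next I would verify the inclusion $\overline{AI\alpha} \supset iU$ for every $i\in I$. Indeed, for any $n\in A$ and $i\in I$ we have $ni\in AI$, so $AI\alpha \supset i(A\alpha)$. Since multiplication by the fixed integer $i$ is a continuous map $\T\to\T$, it sends $\overline{A\alpha}$ into $\overline{i(A\alpha)} \subset \overline{AI\alpha}$, and in particular sends $U$ into $\overline{AI\alpha}$.

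Finally, because $I$ is infinite it contains some integer $i$ with $i\ell > 1$, and for such an $i$ the dilation $iU$ wraps fully around $\T$, so $iU = \T$. Combining this with the previous step gives $\overline{AI\alpha} = \T$, as desired. Since $\alpha$ was an arbitrary irrational, $AI = a^Sb^TI$ is dispersing. There is no real obstacle here: everything beyond Theorem \ref{dispersingtheorem0} is elementary topology on $\T$, and the only point that requires a moment's care is writing $\overline{AI\alpha}\supset iU$ via the continuity of multiplication by a fixed integer rather than trying to commute the closure with the (non-injective) action of $i$.
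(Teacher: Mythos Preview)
Your proposal is correct and follows exactly the approach the paper sketches: apply Theorem~\ref{dispersingtheorem0} to get that $a^S b^T$ is weakly dispersing, then use that multiplying a somewhere-dense subset of $\T$ by a sufficiently large integer covers $\T$. The paper phrases this as ``the product of an infinite subset of $\N$ with a nonempty open subset of $\T$ is equal to $\T$,'' which is the same observation you make by choosing $i\in I$ with $i\ell>1$.
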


Considering the case where $I$ takes the form $a^J$ gives another corollary:


\begin{corollary}
Fix $a,b\geq 2$ not both powers of the same integer. Let $S$ be a Bohr set and let $T$ be a piecewise syndetic set. Then the set
\[
a^S b^T
\]
is dispersing.
\end{corollary}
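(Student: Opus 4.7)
My plan is to reduce to Corollary \ref{dispersingcorollary0} by exhibiting a syndetic set $S' \subset \N$ and an infinite set $J \subset \N$ such that $S' + J \subset S$. Once I have such a decomposition,
\[
a^S b^T \supset a^{S' + J} b^T = a^{S'} b^T a^J,
\]
and the right-hand side is dispersing by Corollary \ref{dispersingcorollary0} (with $S'$, $T$, $a^J$ in the roles of the syndetic, piecewise syndetic, and infinite sets). Since any superset of a dispersing set is dispersing, this will suffice.

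To construct $J$, I would first unpack the Bohr condition on $S$: pick $d \in \N$, $\alpha \in \T^d$, and a nonempty open $U \subset \T^d$ with $\{n \in \N : n\alpha \in U\} \subset S$. Compactness of $\T^d$ gives the sequence $(j\alpha)_{j\in\N}$ an accumulation point $\beta$; choosing an increasing sequence $j_k$ with $j_k\alpha \to \beta$ and forming consecutive differences produces an infinite set $J_0 \subset \N$ along which $j\alpha \to 0$ in $\T^d$. The rational case $\alpha \in (\Q/\Z)^d$ is harmless: some value is then attained infinitely often, so consecutive-difference indices still give an infinite $J_0$.

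To construct $S'$, pick a nonempty open $V$ whose closure lies in $U$; since $\bar V$ is compact, there is a positive $\delta$ such that $V + \xi \subset U$ whenever $\|\xi\| \leq \delta$ in the quotient metric on $\T^d$. Restricting to the cofinite tail $J := \{j \in J_0 : \|j\alpha\| < \delta\}$ ensures $V + j\alpha \subset U$ for all $j \in J$. Set $S' := \{n \in \N : n\alpha \in V\}$, a nonempty Bohr set; Bohr sets are syndetic by a standard pigeonhole argument on $\T^d$. For $n \in S'$ and $j \in J$, $(n+j)\alpha \in V + j\alpha \subset U$, so $n + j \in S$, giving $S' + J \subset S$.

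The main conceptual content is the observation that a Bohr set $S$ contains $S' + j$ for a smaller Bohr set $S'$ and every $j$ in an infinite shift set $J$; the rest is a routine appeal to Corollary \ref{dispersingcorollary0}. The only delicate point is ensuring $J$ is infinite in the degenerate rational case of $\alpha$, which is handled by the parenthetical remark above.
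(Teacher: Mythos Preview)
Your argument is correct and is exactly the paper's approach with the details filled in: the paper's one-line proof simply asserts that a Bohr set contains $S_1+S_2$ with $S_1,S_2$ both Bohr (hence $S_1$ syndetic and $S_2$ infinite) and then applies Corollary~\ref{dispersingcorollary0}. One minor point worth tightening: to guarantee that $S'=\{n:n\alpha\in V\}$ is nonempty, choose $V$ to contain $n_0\alpha$ for some fixed $n_0$ with $n_0\alpha\in U$ (such $n_0$ exists by the definition of a Bohr set).
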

\begin{proof}
Since $S$ is Bohr, it contains a set of the form $S_1 + S_2$, where $S_1,S_2$ are both Bohr. In particular, $S_1$ is syndetic and $S_2$ is infinite, so applying Corollary \ref{dispersingcorollary0} completes the proof.
\end{proof}

Although piecewise syndetic sets can be made to grow at an arbitrarily slow rate, they are still in some sense ``large'' because they have large pieces. It is possible to substitute this largeness by an additional additive structure hypothesis on $T$. Specifically, if $T$ is the finite sum set of a set $R\subset\N$ with certain arithmetical properties, then $a^S b^T$ is dispersing:

\begin{theorem}
\label{dispersingtheorem2}
Fix $a,b\geq 2$ not both powers of the same integer. Let $S$ be a syndetic set and let $T = \FS(R)$, where $R$ is a set such that for all $k$, $(R/k \cap \N)\log_b(a)$ is dense mod one. Then the set
\[
a^S b^T
\]
is dispersing.
\end{theorem}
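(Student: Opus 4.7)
Fix an irrational $\alpha \in \T$, a target $\beta \in \T$, and $\epsilon > 0$. We seek $s \in S$ and $t \in T$ with $|a^s b^t \alpha - \beta|_{\T} < \epsilon$. Let $c$ denote the syndeticity constant of $S$.

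The plan combines Furstenberg's Theorem \ref{theoremfurstenberg} with syndetic shifts and with the density hypothesis on $R$. First, apply Furstenberg to $\alpha$ to produce $(n_0, m_0) \in \Neur \times \Neur$ with $a^{n_0} b^{m_0}\alpha$ close to an intermediate target $\beta^\ast \in \T$ to be fixed later; we may further insist that $m_0 \in T$ by choosing the Furstenberg pair carefully, as the orbit of $\alpha$ visits any open set infinitely often. Second, by syndeticity pick $i_0 \in \{0,\ldots,c\}$ with $s := n_0 + i_0 \in S$; this introduces a multiplicative correction factor of $a^{i_0}$. Third, augment $m_0$ to $t := m_0 + r_1 + \cdots + r_\ell \in T$ by adjoining distinct $r_j \in R$ which do not appear in a fixed $R$-decomposition of $m_0$. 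The overall multiplier is the integer $a^{i_0} b^{r_1 + \cdots + r_\ell}$, and we need it to steer $\beta^\ast$ close to $\beta$ in $\T$.

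The density hypothesis on $R$ is used to engineer the $r_j$'s. The $k = 1$ case says that for any $\gamma \in [0,1)$ there is $r \in R$ with $r \log_b(a) \approx \gamma \pmod 1$; equivalently, $a^r/b^{m_r}$ can be made close to any prescribed value in $[1,b)$ for suitable integer $m_r \approx r \log_b(a)$. Iterating this choice for $j = 1, \ldots, \ell$ lets us adjust the orbit by any desired real factor in an approximate multiplicative sense, enough to traverse the ``gap'' between $\beta^\ast$ and $\beta$. The ``for all $k$'' strength is used to align fractional residues modulo $a^{i_0}$: the $\times a^{i_0}$-action on $\T$ is $a^{i_0}$-to-1, and selecting $r_j$'s with suitable $k$-divisibility is what places the cumulative multiplier in the correct preimage fiber of $\beta$ under this action.

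The crux and main obstacle is quantitative. The Furstenberg approximation $|a^{n_0}b^{m_0}\alpha - \beta^\ast|_\T$ must be sharp enough to survive amplification by the large integer multiplier, while simultaneously the Diophantine approximations $(r_j/k)\log_b(a) \approx \gamma_j$ from the density hypothesis must be fine enough to keep cumulative errors below $\epsilon$. Balancing these rates --- honoring the divisibility constraints uniformly across the $c + 1$ possible syndetic shifts $i_0$, and ensuring each $r_j$ is new --- is where the ``for all $k$'' strength of the hypothesis becomes essential, and is the step I expect to require the most care.
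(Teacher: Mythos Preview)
Your outline has two structural problems that go beyond needing more care. First, there is a genuine circularity in the error budget: for the final estimate you need $\|a^{n_0}b^{m_0}\alpha - \beta^*\| < \epsilon/N$ where $N = a^{i_0}b^{r_1+\cdots+r_\ell}$, but Furstenberg's theorem is purely qualitative, so you cannot pick $(n_0,m_0)$ until $N$ is fixed; yet $i_0$ depends on $n_0$, and the $r_j$ come from a bare density statement with no size control, so $N$ cannot be bounded in advance. Second, the hypothesis is mismatched with your correction factor: density of $r\log_b(a)$ mod $1$ pins down where $a^r$ sits among powers of $b$, but your multiplier contains $b^{r_j}$, not $a^{r_j}$, so the hypothesis says nothing about where multiplication by $b^{r_j}$ sends a point of $\T$. (Separately, Furstenberg does not let you force $m_0\in T$: it gives no control over which exponent pairs realise a given approximation.)

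The paper's argument avoids Furstenberg entirely and goes through Theorem~\ref{theoremproductsublacunary}: a product of $2r$ infinite subsets of $r\N+1$, at least $r$ of them sublacunary, is $1/r$-dispersing. For each large prime $r$ set $k=r-1$; Fermat's little theorem puts $a^{k\N}$ and $b^{k\N}$ inside $r\N+1$. After reserving an infinite $I\subset R$, one splits $R'\cap k\N$ into pieces $R_i$ each still having $(R_i/k)\log_b(a)$ dense mod $1$, and that density is precisely what makes each $B_i=a^{k\N}b^{R_i}$ sublacunary. Then $\prod_{i\le 2r}B_i\subset a^{\N}b^{\FS(R')}$ is $1/r$-dispersing; letting $r\to\infty$ shows $a^{\N}b^{\FS(R')}$ is dispersing. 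Passing from $a^{\N}$ to $a^S$ costs only a finite union (so weak dispersing survives), and multiplying by the infinite set $b^I$ upgrades back to dispersing. Thus the ``for all $k$'' is consumed by taking $k=r-1$ for arbitrarily large primes, and the density hypothesis is used to manufacture sublacunary factors, not to steer an orbit after the fact.
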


Note that the hypothesis given on $R$ imposes no restriction on how slowly $R$ grows; if $f:\N\to\N$ is any function, then we may choose $R = \{n_1,n_2,\ldots\}$ to satisfy $n_{k + 1} \geq f(n_k) \all k$. So for example, by choosing $R$ appropriately we can make the upper Banach density of $T$ equal to zero.\Footnote{Recall that the \emph{upper Banach density} of a set $T\subset\N$ is the number \[d^*(T) = \limsup_{N\to\infty} \frac1N \sup_{M\in\N} \#(T\cap [M,M + N]),\] which satisfies $d^*(T) > 0$ whenever $T$ is piecewise syndetic.}


Next we consider a dispersing analogue of Theorem \ref{completenesstheorem2}. Again the dispersing condition appears to be stronger than the completeness condition: to get a set which we can prove to be dispersing, we need to take the union over all $s$ of a sequence of sets of the form \eqref{completeness2}.

\begin{theorem}
\label{dispersingtheorem1}
Let $(a_i)_1^\infty$ be an infinite sequence of integers, no two of which are powers of the same integer, and suppose there exists a prime $p$ such that the set $\{a_i : p \text{ does not divide } a_i\}$ is infinite. Fix $k\geq 2$ and a sequence $(P_i)_1^\infty$ in $\PP_\N^k$ (cf. Theorem \ref{completenesstheorem2}). For each $s\in\N$ let $A_s$ be given by \eqref{completeness2}. Then the set $A = \bigcup_1^\infty A_s$ is dispersing. More precisely, for every $\epsilon > 0$ there exists $s$ such that the set $A_s$ is $\epsilon$-dispersing.
\end{theorem}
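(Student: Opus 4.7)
My plan is to leverage Furstenberg's Theorem~\ref{theoremfurstenberg} for pair semigroups $\Gamma(a_i, a_j)$, using the remaining indices $a_k$ as multiplicative ``correction factors'' to compensate for the restriction to polynomial exponents, and using the prime $p$ hypothesis to handle residues. Fix $\epsilon > 0$; the goal is to produce $s = s(\epsilon)$ such that $A_s \alpha$ is $\epsilon$-dense in $\T$ for every irrational $\alpha$.

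First I would pass to the infinite subsequence of $a_i$'s coprime to $p$ and relabel, so that $\gcd(a_i, p) = 1$ for every $i$. Given an irrational $\alpha$ and a target $x \in \T$, Theorem~\ref{theoremfurstenberg} applied to $\Gamma(a_1, a_2)$ and the irrational $\beta := \left(\prod_{k=3}^s a_k^{P_k(0)}\right)\alpha$ produces $m_1, m_2 \in \Neur$ with $\|a_1^{m_1} a_2^{m_2}\beta - x\| < \epsilon/2$. The obstruction is that $m_1, m_2$ may not lie in the sparse sets $P_1(\Neur), P_2(\Neur)$, so the integer $a_1^{m_1} a_2^{m_2}\prod_{k=3}^s a_k^{P_k(0)}$ may not belong to $A_s$. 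To close the gap, I would choose $n_1, n_2$ with $P_1(n_1) \leq m_1$ and $P_2(n_2) \leq m_2$ maximal, and then seek $(n_3, \ldots, n_s)$ so that $\prod_{k=3}^s a_k^{P_k(n_k) - P_k(0)}$ approximates the residual factor $a_1^{m_1 - P_1(n_1)} a_2^{m_2 - P_2(n_2)}$ to within a ratio whose action on $\alpha$ moves the orbit by less than $\epsilon/2$.

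The feasibility of this correction for $s$ large rests on two ingredients. First, the achievable correction set $\{\prod_{k=3}^s a_k^{P_k(n_k) - P_k(0)} : n_k \in \Neur\}$ is sublacunary (using that the $\log a_k$'s are pairwise $\Q$-linearly independent and each $P_k$ is non-constant), so its elements become dense on the log scale as $s\to\infty$. Second, the prime $p$ hypothesis makes every $a_k$ a unit mod $p^N$, and combined with the polynomial exponent images $P_k(\Neur) \bmod \mathrm{ord}_{p^N}(a_k)$, the correction-factor set covers every residue in $(\Z/p^N\Z)^\times$ for $s$ sufficiently large. The main obstacle I foresee is making the choice of $s$ uniform in $\alpha$, and converting the log-scale density plus congruence coverage into genuine $\epsilon$-closeness on $\T$-orbits; I expect this to require a Dirichlet-approximation step to select an appropriate modulus $p^N$ for each $\alpha$, combined with a compactness argument on $\T$ to eliminate any dependence of $s$ on $\alpha$.
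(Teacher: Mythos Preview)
Your proposal has a genuine gap in the ``correction'' step. You want to choose $n_3,\ldots,n_s$ so that the integer
\[
M := a_1^{P_1(n_1)} a_2^{P_2(n_2)} \prod_{k=3}^s a_k^{P_k(n_k)}
\]
is close enough to $N := a_1^{m_1} a_2^{m_2}\prod_{k=3}^s a_k^{P_k(0)}$ that $\|M\alpha - N\alpha\| < \epsilon/2$. But closeness of $M$ and $N$ on the log scale (i.e.\ $M/N$ close to $1$) says nothing about $\|(M-N)\alpha\|$: when $M$ and $N$ are large, $M-N$ is a large integer even if $M/N\approx 1$, and $(M-N)\alpha$ can land anywhere in $\T$. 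Your ``log-scale density plus congruence coverage'' ingredients therefore do not combine to give $\epsilon$-closeness of orbit points. In fact, since the $a_i$ are multiplicatively independent, the only way to have $M=N$ exactly is $m_i = P_i(n_i)$ and $P_k(n_k)=P_k(0)$, which defeats the whole correction. A related issue: your appeal to Furstenberg's Theorem~\ref{theoremfurstenberg} produces $m_1,m_2$ that depend on both $\alpha$ and the target $x$, with no a priori size control, so the compactness argument you allude to for making $s$ uniform in $\alpha$ would have to be made much more precise, and it is not clear it can be.

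The paper's route avoids Furstenberg's theorem entirely. It first passes to the subsequence coprime to $p$ and, given $\epsilon>0$, chooses $r=p^\ell>1/\epsilon$. Restricting each exponent variable to multiples of $p^{\ell}(p-1)$ forces every factor $a_j^{P_j(\cdot)}$ to lie in $r\N+1$ by Euler's theorem. Grouping the factors into blocks of size $s_2(k)$ and applying Freeman's theorem (Corollary~\ref{corollaryfreeman2}) makes each block sublacunary. The conclusion then follows from the paper's main dispersing tool, Theorem~\ref{theoremproductsublacunary}: a product of $2r$ infinite subsets of $r\N+1$, at least $r$ of them sublacunary, is automatically $1/r$-dispersing, uniformly in $\alpha$. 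The uniformity in $\alpha$ is built into that theorem and does not require any separate compactness step.
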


It appears to be a difficult question whether or not the sets $A_s$ in Theorem \ref{dispersingtheorem1} are dispersing for sufficiently large $s$. This may make the theorem seem trivial on some level, because the final set $A$ is decomposed as the product of infinitely many infinite sets. But by itself this property is not enough to guarantee dispersing, as shown by the following theorem:

\begin{theorem}
\label{dispersingtheorem4}
Let $(a_i)_1^\infty$ be a sequence of integers such that $a_i\geq 2$ for all $i$. Then there exist thick sets $(S_i)_1^\infty$ such that the set
\[
A = \prod_{i = 1}^\infty \{1\}\cup a_i^{S_i} = \bigcup_{F\subset\N} \prod_{i\in F} a_i^{S_i}
\]
is not weakly dispersing.
\end{theorem}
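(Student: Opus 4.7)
My plan is to construct, by a simultaneous inductive diagonal argument, an irrational $\alpha\in\T$ and thick sets $(S_i)$ such that $\|M\alpha\| < 1/4$ for every $M\in A$; since then $A\alpha$ is disjoint from the open interval $(1/4, 3/4)\subset\T$, the set $A$ will not be weakly dispersing.

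I would take $\alpha = \sum_{k\geq 1} 1/Q_k$, where $(Q_k)$ is a very rapidly increasing sequence of positive integers chosen inductively so that $Q_k \mid Q_{k+1}$. The key elementary calculation is that if $K(M) := \max\{k : Q_k \leq M\}$, $Q_{K(M)} \mid M$, and $Q_{K(M)+1} \geq 10M$, then $\|M\alpha\| \leq 2M/Q_{K(M)+1} \leq 1/5$, because the head $\sum_{k\leq K(M)} M/Q_k$ is an integer (using $Q_k\mid Q_{K(M)}\mid M$) and the tail $\sum_{k>K(M)} M/Q_k$ is bounded by a geometric sum. Thus the entire goal reduces to ensuring this divisibility condition for every $M\in A$.

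The construction proceeds in stages. At stage $k$ I append a new interval of length $k$ to each of $S_1,\ldots,S_k$, placed at some very large position $N_{k,i}$, and I specify the new $Q_k$. The $N_{k,i}$ and $Q_k$ are coordinated so that each newly committed product $M=\prod_{i\in F}a_i^{n_i}$ lies in a single band $[Q_j, Q_{j+1}/10)$ with $Q_j\mid M$. Thickness of the $S_i$ is immediate from the construction (an interval of length $k$ is added to $S_i$ at each stage $k\geq i$), and $\alpha$ is irrational by the super-rapid growth of the $Q_k$'s (Liouville).

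The main obstacle is the interaction between pure powers $a_i^{n_i}$ and the prime content of the $Q_k$'s: $Q_k\mid a_i^{n_i}$ forces the prime factors of $Q_k$ to divide $a_i$. When the $a_i$'s share a common prime factor $p$, taking $Q_k = p^{e_k}$ with $e_k$ growing resolves this uniformly: one simply places each new interval of $S_i$ at some $N_{k,i}\geq e_k/v_p(a_i)$. In general one must rotate the prime content of the $Q_k$'s through the primes appearing in the $a_i$'s and place each $S_i$ only in those bands $[Q_k,Q_{k+1})$ whose $Q_k$ is compatible with the factorization of $a_i$; products with $|F|\geq 2$ are then handled by arranging, via the freedom in the $N_{k,i}$'s, that in each committed product one factor dominates in size and so determines the band, while the others contribute negligibly. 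This is feasible because the ratios $Q_{k+1}/Q_k$ can be taken arbitrarily large, so the allowed intervals for each $S_i$ can be made arbitrarily long, preserving thickness.
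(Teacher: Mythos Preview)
Your approach has a genuine obstruction in the general case. You commit to $\alpha=\sum_k 1/Q_k$ with the chain $Q_k\mid Q_{k+1}$, and your ``head is an integer'' step needs $Q_{K(M)}\mid M$ for every $M\in A$. Now take $a_1=2$, $a_2=3$. For each $n\in S_1$ the pure power $M=2^n$ lies in some band $[Q_K,Q_{K+1})$, and $Q_K\mid 2^n$ forces $Q_K$ to be a power of $2$; symmetrically, each $3^m$ with $m\in S_2$ must land in a band whose $Q_K$ is a power of $3$. But the chain $Q_k\mid Q_{k+1}$ makes the set of prime divisors of $Q_k$ nondecreasing in $k$: as soon as one $Q_k$ is even, every later $Q_{k'}$ is even, hence no later band is admissible for any power of $3$. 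Thus $S_2$ could only meet finitely many bands and cannot be thick. Your proposed fix of ``rotating the prime content of the $Q_k$'s'' is incompatible with the divisibility chain you rely on, and if you drop the chain then the head $\sum_{k\le K(M)} M/Q_k$ is no longer an integer and you have given no mechanism to control its fractional part. The same problem recurs for mixed products: once the $Q_K$ in the relevant band carries a prime absent from every $a_i$ with $i\in F$, no product $\prod_{i\in F}a_i^{n_i}$ can be divisible by it.

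The paper avoids this arithmetic trap by \emph{not} building $\alpha$ at all: it fixes a single $\alpha$ (Lebesgue-generic, so that $0$ is a limit point of $a_i^{\mathbb N}\alpha$ for every $i$) and then constructs the intervals of $S_i$ one block at a time. At step $k$ (handling index $i=\pi_1(k)$, interval length $\pi_2(k)$) one simply chooses $N_k$ so large that $\|a_i^{N_k}\alpha\|\le (kM_k)^{-1}$, where $M_k$ bounds the product of all previously committed blocks together with the new shift $a_i^{\pi_2(k)}$. Then for any $n=\prod_{j\in F}a_{\pi_1(j)}^{N_j+s_j}\in A$ one factors out $a_{\pi_1(k)}^{N_k}$ with $k=\max F$ and bounds $\|n\alpha\|\le M_k\|a_{\pi_1(k)}^{N_k}\alpha\|\le 1/k$, so $(A\alpha)'=\{0\}$ and $A\alpha$ is nowhere dense. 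The moral is that the role you wanted $Q_k$ to play (making $M\alpha$ nearly integral) is played instead by the single factor $a_{\pi_1(k)}^{N_k}$, whose closeness to $0$ in $\mathbb T$ comes from density of the orbit rather than from any divisibility.
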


Theorem \ref{dispersingtheorem4} can be interpreted as saying that an infinite multiplicative decomposition property is not enough to replace the semigroup property, while Theorem \ref{dispersingtheorem1} says that it is enough if the sets $S_i$ have an algebraic structure. The next theorem does not require further algebraic structure of the factors of an infinite multiplicative decomposition, but only requires a growth condition (sublacunarity) as well as a divisibility condition.

\begin{theorem}
\label{dispersingtheorem3}
Let $S$ be a set with the following property: there exist infinitely many $r\in\N$ such that $S\cap (r\N + 1)$ is sublacunary. Then the finite product set
\[
\FP(S) := \left\{\Pi(F) = \prod_{n\in F} n : F\subset S \text{ finite}\right\}
\]
is dispersing.
\end{theorem}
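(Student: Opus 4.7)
The plan is to fix an irrational $\alpha\in\T$, a target $c\in\T$, and $\epsilon>0$, and produce $m\in\FP(S)$ with $\|m\alpha-c\|<\epsilon$. By the hypothesis, choose $r\in\N$ with $r>1/\epsilon$ and with $S_r:=S\cap(r\N+1)$ sublacunary. Since $\FP(S_r)\subseteq\FP(S)$, it suffices to produce such an $m$ inside $\FP(S_r)$. Every element $m\in\FP(S_r)$ satisfies $m\equiv1\pmod r$, so writing $m=1+r\kappa$ we get $m\alpha\equiv\alpha+(r\alpha)\kappa\pmod 1$, and the problem reduces to showing that the set $K_r:=\{(m-1)/r:m\in\FP(S_r)\}$ has $(r\alpha)\cdot K_r$ being $\epsilon$-dense in $\T$.

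Sublacunarity of $S_r=\{n_1<n_2<\cdots\}$ gives $\log n_{k+1}-\log n_k\to 0$, so $\log S_r$ is dense in $[\log n_1,\infty)$, and therefore so is $\log\FP(S_r)=\FS(\log S_r)$, its finite-sum-set companion. Equivalently, $\FP(S_r)$ is multiplicatively dense in $\N$: for every $\eta>0$ and every $N$ sufficiently large, the window $[N,(1+\eta)N]$ meets $\FP(S_r)$ in an unbounded (and growing with $N$) number of elements.

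The main step is to upgrade multiplicative density of $\FP(S_r)$ in $\N$ to additive density of $\FP(S_r)\alpha$ modulo $1$. My plan exploits two features of the hypothesis: first, $r$ may be chosen arbitrarily large among the good moduli, so $1/r$ is as small as we like; second, deleting any finite subset of $S$ preserves the hypothesis, so any element used in a factorization may be ``traded out'' for another element of $S_r$ of comparable size. Given the target $c$, my approach is to pick a window $[N,(1+\eta)N]$ containing $M\gg 1/\epsilon$ elements of $\FP(S_r)$ and examine the cluster $\{m\alpha\bmod 1:m\in\FP(S_r)\cap[N,(1+\eta)N]\}$; by pigeonhole, two such elements $m_1,m_2$ agree to within $\epsilon$ mod $1$, and then a product-swap within $\FP(S_r)$—replacing a factor by one slightly larger or smaller from the same sublacunary set—should allow one of them to be steered into the $\epsilon$-neighborhood of $c$. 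A second good modulus $r'$ coprime to $r$ can be brought in through the Chinese Remainder Theorem to enlarge the set of realizable residues and thereby sharpen the steering.

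The main obstacle is precisely this upgrade from multiplicative to additive density. Example~\ref{exampleNCD} shows that a sublacunary set $A$ need not satisfy $A\alpha$ dense, so the product structure of $\FP(S_r)$ (not merely the log-density that sublacunarity yields) must be used in an essential way; equivalently, the presence of \emph{infinitely many} good moduli in the hypothesis—rather than, say, a single sublacunary subset in a fixed congruence class—should be what forces the transfer to succeed, by allowing $r$ to be made arbitrarily large and permitting simultaneous control modulo several auxiliary moduli.
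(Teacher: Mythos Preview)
Your proposal correctly identifies the opening move (choose $r>1/\epsilon$ with $S_r=S\cap(r\N+1)$ sublacunary, and work inside $\FP(S_r)$), but the heart of the argument---what you yourself call ``the main obstacle''---is not actually carried out. The pigeonhole step only produces two products $m_1,m_2\in\FP(S_r)$ with $\|(m_1-m_2)\alpha\|<\epsilon$; it does not place any product near the prescribed target $c$. Your ``product-swap'' is also not a controlled move: replacing a factor $n$ by a nearby $n'$ multiplies $m$ by $n'/n\approx 1$, so $m'\alpha=m\alpha+(n'/n-1)m\alpha$, and the additive increment $(n'/n-1)m\alpha$ has no useful size mod $1$ because $m$ is large. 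The reduction to $K_r=\{(m-1)/r\}$ and $\beta=r\alpha$ just reformulates the problem without adding structure, and bringing in a second modulus $r'$ via CRT is a red herring here: the dispersing condition concerns an irrational $\alpha$, not residues.

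The paper supplies exactly the missing mechanism via Theorem~\ref{theoremproductsublacunary}. The idea is to split the sublacunary set $S_r$ into $2r$ pairwise \emph{disjoint} sublacunary subsets $B_1,\ldots,B_{2r}$ (easy, since sublacunarity survives thinning), pair them as $(B_i,C_i)=(B_i,B_{r+i})$, and run an induction on $k=0,\ldots,r-1$: given $\alpha_k$ irrational with $\alpha_k,\alpha_k+1/r,\ldots,\alpha_k+k/r\in\overline{B_1C_1\cdots B_kC_k\,\alpha}$, observe that $(C_{k+1}-C_{k+1})\alpha_k$ accumulates at $0$, so by Corollary~\ref{corollaryAS} the set $B_{k+1}(C_{k+1}-C_{k+1})\alpha_k$ is dense in $\T$; in particular $1/r$ lies in its closure, which yields $\alpha_{k+1}\in\overline{B_{k+1}C_{k+1}\alpha_k}$ with $\alpha_{k+1}+1/r$ also in that closure. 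Because $B_{k+1},C_{k+1}\subset r\N+1$, multiplication by elements of $B_{k+1}C_{k+1}$ fixes each coset $i/r+\Z$, so the previously built $1/r$-net is carried forward and extended by one point. After $r-1$ steps one has a full $1/r$-net inside $\overline{\big(\prod_{i=1}^{2r}B_i\big)\alpha}\subset\overline{\FP(S)\alpha}$. (A separate short case handles the possibility that some intermediate $\alpha_k$ is rational.) This is precisely the ``upgrade'' you were looking for, and it uses the product structure and the congruence condition in an essential, organized way that your pigeonhole sketch does not.
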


From what we have said so far, it might appear that it is always harder to prove a dispersing result than a corresponding completeness result, or even that the dispersing property might somehow imply the completeness one. But this is not true, as we can show in two different ways. First of all, if $a,b\geq 3$ are not powers of the same integer but $\gcd(a,b) \geq 3$ (e.g. $a = 3$, $b = 6$), then by Theorem \ref{theoremfurstenberg} the set $\Gamma(a,b)$ is dispersing, but it follows from arithmetic considerations that $\Gamma(a,b)$ is not complete. Second, and more significantly, the completeness property is tied to growth rates in a way that the dispersing property is not. If a set $A$ is complete, then a counting argument implies that
\[
\#\{n\in A : n \leq 2^N + \synd\} \geq N \all N\in\N
\]
for some constant $\synd\in\N$. By contrast, the following observation shows that there is no lower bound on the growth rates of dispersing sets:

\begin{observation}
Every thick set is dispersing, and every piecewise syndetic set is weakly dispersing.
\end{observation}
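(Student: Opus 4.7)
My plan is to exploit the fundamental fact that for irrational $\alpha \in \T$, the orbit $\{n\alpha : n \in \Neur\}$ is dense in $\T$ by Kronecker's theorem; equivalently, for every $\epsilon > 0$ there is some $K = K(\alpha,\epsilon)$ such that $\{0,\alpha,\ldots,K\alpha\}$ is $\epsilon$-dense. Since $\epsilon$-density is invariant under translation in $\T$, every set of the form $\{M\alpha, (M+1)\alpha, \ldots, (M+K)\alpha\} = M\alpha + \{0,\alpha,\ldots,K\alpha\}$ is $\epsilon$-dense as well. This ``uniform'' version of Kronecker's theorem handles both halves of the observation once one feeds it the appropriate large interval.

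For the first half, suppose $A$ is thick and fix an irrational $\alpha$ and $\epsilon > 0$. By thickness $A$ contains an interval $[M,M+K]$, and by the previous paragraph $A\alpha \supset \{M\alpha,\ldots,(M+K)\alpha\}$ is $\epsilon$-dense. Since $\epsilon$ was arbitrary, $A\alpha$ is dense, so $A$ is dispersing.

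For the second half, write $A = T\cap S$ with $T$ thick and $S$ syndetic of syndeticity constant $\synd$. For any $k\in\N$ choose $N$ so that $[N,N+k]\subset T$; then $A\cap[N,N+k] = S\cap[N,N+k]$. The definition of syndeticity shows that for every $n\in[N+\synd,N+k]$ there exists $i\in\{0,\ldots,\synd\}$ with $n-i\in S\cap[N,N+k]\subset A$, so
\[
[N+\synd,\,N+k] \;\subset\; \bigcup_{i = 0}^\synd (A + i),
\]
and multiplying by $\alpha$ gives $[N+\synd,N+k]\alpha \subset \bigcup_{i=0}^\synd (A\alpha + i\alpha)$. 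Taking $k$ (and hence $N$) large and applying the uniform Kronecker statement, the left-hand side becomes $\epsilon$-dense for any prescribed $\epsilon>0$, so the finite union $\bigcup_{i=0}^\synd(A\alpha+i\alpha)$ is dense in $\T$. Passing to closures, $\bigcup_{i=0}^\synd(\overline{A\alpha}+i\alpha) = \T$.

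The final step is a Baire category argument: if $\overline{A\alpha}$ had empty interior then each translate $\overline{A\alpha}+i\alpha$ would be a closed nowhere dense subset of $\T$, and a finite union of such sets in the Baire space $\T$ cannot equal $\T$. Hence $\overline{A\alpha}$ has nonempty interior, i.e., $A\alpha$ is somewhere dense, so $A$ is weakly dispersing. No step here is a genuine obstacle; the only subtlety is remembering that a finite union of closed nowhere dense sets in a complete metric space remains nowhere dense, which justifies moving from ``$\bigcup$ is dense'' to ``one summand is somewhere dense.''
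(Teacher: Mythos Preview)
Your proof is correct and follows essentially the same approach as the paper: both use Kronecker's theorem plus translation invariance for the thick case, and for the piecewise syndetic case both establish that $\bigcup_{i=0}^{\synd}(\overline{A\alpha}+i\alpha)=\T$ and then conclude via the elementary fact that a finite union of nowhere dense sets is nowhere dense. The only cosmetic difference is that the paper cites the equivalent characterization ``$A+F$ is thick for some finite $F$'' directly, whereas you rederive it from the decomposition $A=T\cap S$; your invocation of Baire category is slightly more than needed (the finite-union statement requires no completeness), but this does not affect correctness.
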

\begin{proof}
Let $A\subset\N$ be a thick set. Then there exists a sequence $n_k\to\infty$ such that $A \supset \{n_k + m : 0 \leq m \leq k\}$ for all $k$. Fix $\alpha\in \T$ irrational and $\epsilon > 0$. Then for some $k$, the set $\{0,\alpha,\ldots,k\alpha\}$ is $\epsilon$-dense in $\T$. By adding $n_k \alpha$, we see that $A\alpha$ is $\epsilon$-dense in $\T$.

If $A\subset\N$ is piecewise syndetic, then $A + F$ is thick for some finite set $F\subset\N$. If $\alpha\in \T$ is irrational, then $\cl{A\alpha} + F\alpha = \T$ by the above argument, so by elementary topology, one of the sets $\cl{A\alpha} + i\alpha$ ($i\in F$) contains a nonempty open set. Thus $A\alpha$ is somewhere dense.
\end{proof}

This observation is ``optimal'' in the sense that not every syndetic set is dispersing, and no lower bound on the growth rate of a set weaker than syndeticity is sufficient to guarantee that a set is weakly dispersing. More precisely, given any $\alpha > 0$ the syndetic set
\[
\{n\in\N : \|n\alpha\| < 1/4\}
\]
is not dispersing, and the following observation shows that any ``growth rate lower bound'' which is satisfied for some density zero set is also satisfied for some set which is not weakly dispersing:

\begin{observation}
\label{observationzerodensity}
Let $(m_k)_1^\infty$ be an increasing sequence of integers such that $m_{k + 1} - m_k \to \infty$, and fix $\beta\in\T$. Then for all irrational $\alpha\in\T$ there exists a sequence $(n_k)_1^\infty$ such that $\|n_k \alpha\| \to \beta$ and for all $k$, $m_k \leq n_k < m_{k + 1}$. In particular, $\{n_k : k\in\N\}$ is not weakly dispersing.
\end{observation}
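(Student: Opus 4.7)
The plan is to build $n_k$ greedily using the fact that, for an irrational $\alpha$, the orbit $(j\alpha)_{j\geq 0}$ is dense (indeed equidistributed) in $\T$, together with translation invariance under addition on $\T$.

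First, I will observe the following uniform density statement: for every $\epsilon>0$ there exists $N(\epsilon)\in\N$ such that for every integer $M\geq 0$, the set $\{(M+j)\alpha : 0\leq j\leq N(\epsilon)\}$ is $\epsilon$-dense in $\T$. This is immediate because that set is the translate $M\alpha + \{j\alpha : 0\leq j\leq N(\epsilon)\}$, and by Weyl equidistribution (or just Dirichlet's approximation theorem applied to $\alpha$) one can fix $N(\epsilon)$ making the latter $\epsilon$-dense.

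Next I choose $\epsilon_k\to 0$ and exploit the hypothesis $m_{k+1}-m_k\to\infty$. Interpreting $\beta\in\T$ as a target point and writing $d_\T$ for the induced metric, for all sufficiently large $k$ one has $m_{k+1}-m_k > N(\epsilon_k)$, so by the observation above there exists $j_k$ with $0\leq j_k \leq N(\epsilon_k)$ such that $d_\T\bigl((m_k+j_k)\alpha,\beta\bigr)<\epsilon_k$. Setting $n_k := m_k+j_k$ (and $n_k := m_k$ for the finitely many small $k$ where the argument does not apply) gives $m_k\leq n_k < m_{k+1}$ and $n_k\alpha\to\beta$ in $\T$; since $\|\cdot\|$ is continuous, $\|n_k\alpha\|\to\|\beta\|$, which is the content of the statement as written.

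Finally, to see that $A:=\{n_k : k\in\N\}$ is not weakly dispersing, I note that for this particular irrational $\alpha$ the set $A\alpha = \{n_k\alpha : k\in\N\}$ has $\beta$ as its only accumulation point in $\T$, so its closure is the countable set $A\alpha\cup\{\beta\}$. A countable closed subset of $\T$ has empty interior by the Baire category theorem, hence $A\alpha$ is nowhere dense and $A$ fails to be weakly dispersing. There is no serious obstacle here: the only nontrivial input is the uniform $\epsilon$-density of orbit arcs, which follows from elementary Diophantine approximation, and the growth hypothesis on $(m_k)$ is exactly what is needed to fit such an arc inside $[m_k,m_{k+1})$.
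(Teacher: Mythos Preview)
Your argument is correct and follows essentially the same idea as the paper's: the paper simply chooses $n_k\in\{m_k,\ldots,m_{k+1}-1\}$ to minimize $\|n_k\alpha-\beta\|$ and then observes that if $\{0,\ldots,N\}\alpha$ is $\epsilon$-dense and $m_{k+1}-m_k>N$ then $\|n_k\alpha-\beta\|\le\epsilon$, so $\|n_k\alpha-\beta\|\to 0$. Your version with a pre-chosen sequence $\epsilon_k\to 0$ is equivalent, though you should be explicit that $\epsilon_k$ must be chosen adaptively (e.g.\ as the least $\epsilon$ with $N(\epsilon)<m_{k+1}-m_k$), since for an arbitrary sequence $\epsilon_k\to 0$ there is no reason $N(\epsilon_k)<m_{k+1}-m_k$ should hold eventually; the paper's ``minimize first, bound afterwards'' formulation sidesteps this quantifier issue.
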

\begin{proof}
Choose $n_k \in \{m_k,\ldots,m_{k + 1} - 1\}$ so as to minimize $\|n_k \alpha - \beta\|$. If $\{0,\ldots,N\}\alpha$ is $\epsilon$-dense mod 1 and $m_{k + 1} - m_k > N$, then $\|n_k\alpha - \beta\| \leq \epsilon$. Thus since $m_{k + 1} - m_k \to \infty$, we have $\|n_k\alpha - \beta\|\to 0$.
\end{proof}

The following corollary was also obtained by Porubsky and Strauch \cite{PorubskyStrauch}:

\begin{corollary}
Let $(\epsilon_k)_1^\infty$ be a decreasing sequence of real numbers such that $\epsilon_k \to 0$, and fix $\beta\in\T$. Then for all irrational $\alpha\in\T$ there exists a sequence $(n_k)_1^\infty$ such that $\|n_k \alpha\| \to \beta$ and $k/n_k \geq \epsilon_k$ for all $k$.
\end{corollary}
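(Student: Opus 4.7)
The strategy is to obtain this as a direct consequence of Observation \ref{observationzerodensity}: if we can find a strictly increasing sequence $(m_k)_1^\infty$ with $m_{k+1} - m_k \to \infty$ and $m_{k+1} \leq k/\epsilon_k + 1$ for every $k$, then the observation applied to $(m_k)$, $\alpha$, and $\beta$ produces integers $n_k \in [m_k, m_{k+1})$ with $\|n_k\alpha\| \to \beta$, and the upper bound on $m_{k+1}$ automatically forces $n_k \leq k/\epsilon_k$, that is, $k/n_k \geq \epsilon_k$.

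The natural candidate is $m_{k+1} := \lfloor k/\epsilon_k\rfloor + 1$, which saturates the allowed upper bound. (If $\epsilon_k = 0$ for some $k$, then the constraint $k/n_k \geq \epsilon_k$ is automatic, so we may first replace $(\epsilon_k)$ by any strictly positive decreasing sequence $(\epsilon_k')$ with $\epsilon_k' \geq \epsilon_k$ and $\epsilon_k' \to 0$ and prove the conclusion for $(\epsilon_k')$.) The remaining task is to verify that the gaps tend to infinity, which is immediate from the monotonicity of $(\epsilon_k)$:
\[
\frac{k}{\epsilon_k} - \frac{k-1}{\epsilon_{k-1}} \;\geq\; \frac{k}{\epsilon_k} - \frac{k-1}{\epsilon_k} \;=\; \frac{1}{\epsilon_k},
\]
so $m_{k+1} - m_k \geq 1/\epsilon_k - 1$, which tends to infinity.

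The only minor bookkeeping concern is to ensure that $(m_k)$ is strictly increasing from the start (the formula $\lfloor k/\epsilon_k\rfloor + 1$ might fail monotonicity for small $k$), but this is purely cosmetic: one may replace each $m_k$ by $\max(m_k, k)$, or simply shift indices, since the conclusion is asymptotic and finitely many early terms play no role. Consequently there is no substantive obstacle; the corollary is essentially a repackaging of Observation \ref{observationzerodensity} once the block sizes $[m_k, m_{k+1})$ are calibrated to the sequence $(\epsilon_k)$.
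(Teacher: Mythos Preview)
Your argument is correct and follows exactly the paper's one-line strategy: choose $m_k \approx k/\epsilon_k$ and invoke Observation~\ref{observationzerodensity}. In fact your indexing $m_{k+1}=\lfloor k/\epsilon_k\rfloor+1$ is the careful version of the paper's terse ``take $m_k=\lceil k/\epsilon_k\rceil$,'' which as written actually gives the reversed inequality $k/n_k\le\epsilon_k$; your shift of index fixes precisely that slip.
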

\begin{proof}
Take $m_k = \lceil k/\epsilon_k\rceil$ and apply the previous observation.
\end{proof}



{\bf Outline of the paper.} The proofs of all theorems regarding completeness will be given in Section \ref{sectioncompleteness}, while the proofs of all theorems regarding the dispersing condition will be given in Section \ref{sectiondispersing}. The Appendix contains auxiliary calculations regarding the remarks surrounding Theorem \ref{completenesstheorem2}.

\section{Proofs of completeness results}
\label{sectioncompleteness}

We begin by stating the main theorem we will use to prove our completeness results.

\begin{maintheorem}
\label{maintheorem}
Let $B_1,B_2,B_3,C\subset\N$ be four pairwise disjoint sets such that:
\begin{itemize}
\item[(I)] For all $i = 1,2,3$,
\begin{equation}
\label{casselsassumption1}
\sup\left\{\big(n - \sum\{m\in B_i : m < n\}\big) : n\in B_i\right\} < \infty.
\end{equation}
\item[(II)] For all $\alpha\in\T$ irrational,
\begin{equation}
\label{casselsassumption2}
\sum_{n\in C} \|n\alpha\| = \infty.
\end{equation}
\item[(III)] For all $q$,
\begin{equation}
\label{casselsassumption3}
\FS(C) + q\Z = \Z.
\end{equation}
\end{itemize}
Then $A = B_1\cup B_2\cup B_3\cup C$ is complete.
\end{maintheorem}

It is worth comparing this theorem to a theorem of Cassels:

\begin{theorem}[{\cite[Theorem I]{Cassels2}}]
\label{theoremrealcassels}
Fix $A\subset\N$. Suppose that
\begin{equation}
\label{realcassels1}
\lim_{N\to\infty} \frac{\#(A\cap [N + 1,2N])}{\log\log(N)} = \infty
\end{equation}
and that for every $\alpha\in\T$ such that $\alpha\neq 0$,
\begin{equation}
\label{realcassels2}
\sum_{n\in A} \|n\alpha\|^2 = \infty.
\end{equation}
Then $A$ is complete.
\end{theorem}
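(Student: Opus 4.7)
The plan is to use the Hardy--Littlewood circle method applied to the subset-sum generating function. For each $N$, write $A_N = A\cap[1,N]$ and set
\[
F_N(\alpha) = \prod_{n\in A_N}\bigl(1 + e(n\alpha)\bigr) = \sum_m r_N(m)\,e(m\alpha),
\]
where $e(x) := e^{2\pi i x}$ and $r_N(m) = \#\{F\subseteq A_N : \Sigma(F) = m\}$. By Fourier inversion, $r_N(m) = \int_0^1 F_N(\alpha)e(-m\alpha)\,d\alpha$, so to prove that $A$ is complete it suffices to produce, for every sufficiently large $m$, some $N$ for which the right-hand side is strictly positive.

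The first step is a minor-arc bound. The pointwise estimate $|1 + e(n\alpha)| = 2|\cos(\pi n\alpha)| \leq 2\exp(-c\|n\alpha\|^2)$ (for a universal $c > 0$) gives
\[
|F_N(\alpha)| \leq 2^{\#A_N}\exp\Bigl(-c\sum_{n\in A_N}\|n\alpha\|^2\Bigr),
\]
so hypothesis \eqref{realcassels2} forces $|F_N(\alpha)|/2^{\#A_N}\to 0$ for each fixed $\alpha\neq 0$. I would upgrade this pointwise decay to a uniform minor-arc estimate by covering $\T$ by $O(\log N)$ arcs and showing that outside a finite union of neighbourhoods of rationals $p/q$ with $q$ below a slowly growing threshold $Q_N$, one has $|F_N(\alpha)| \leq 2^{\#A_N}\,N^{-10}$. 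This is exactly the step where the $\log\log N$ growth condition \eqref{realcassels1} is used essentially: it supplies enough elements of $A$ in dyadic blocks to beat the $\log N$-loss coming from the covering.

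The second step is a major-arc analysis. Near a rational $p/q$ with $\gcd(p,q) = 1$ and $q \leq Q_N$, writing $\alpha = p/q + \beta$ and grouping $n\in A_N$ by residue class mod $q$ produces a factorization of $F_N(p/q+\beta)$ into an arithmetic factor depending on the distribution of $A_N$ modulo $q$ and a smooth Gaussian-like factor in $\beta$, centred at the mean subset sum $\tfrac12\Sigma(A_N)$ with standard deviation $\sigma_N := \tfrac12\bigl(\sum_{n\in A_N}n^2\bigr)^{1/2}$. Hypothesis \eqref{realcassels2} applied to $\alpha = p/q$ ensures that for each $q\geq 2$ the set $A$ contains infinitely many elements not divisible by $q$, which forces the arithmetic factors to be nonzero. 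Integrating and summing over the major arcs yields a main term of order $2^{\#A_N}/\sigma_N$ for every $m$ in a window of length $\asymp \sigma_N$ about $\tfrac12\Sigma(A_N)$; this dominates the minor-arc error from Step 1. As $N$ grows, consecutive windows overlap because $\Sigma(A_N)$ and $\sigma_N$ both grow while $\#A_N \to \infty$, so the union of the windows covers all sufficiently large integers.

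The step I expect to be the main obstacle is the uniform minor-arc bound. Hypothesis \eqref{realcassels2} is purely pointwise, with no control on how fast the partial sums grow as a function of $\alpha$, and converting it into the quantitative uniform estimate needed for Step 1 requires a careful covering argument organised by continued-fraction denominators, combined with a large-deviation estimate for $\sum_{n\in A_N}\|n\alpha\|^2$ on each arc. The fact that the threshold in \eqref{realcassels1} is merely $\log\log N$---rather than a polynomial in $N$---reflects that this covering must be executed with essentially no slack to spare, and is presumably what makes Cassels' theorem sharp up to constants in the growth hypothesis.
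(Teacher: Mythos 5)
First, for orientation: the paper does not prove Theorem \ref{theoremrealcassels} at all --- it is quoted from Cassels \cite{Cassels2} solely for comparison with Theorem \ref{maintheorem}, whose proof proceeds by a completely different, non-analytic route (the Burr--Erd\doubleacute os syndeticity lemma, the Bergelson--Furstenberg--Weiss piecewise-Bohr theorem, and a compact-semigroup argument). So there is no proof in the paper to compare yours against; what you have written is an outline of the classical generating-function attack, which is indeed the spirit of Cassels' original argument.

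As a proof, however, your proposal has a genuine gap, and it is exactly the one you flag yourself: the uniform minor-arc estimate does not follow from the stated hypotheses by the covering argument you describe. Hypothesis \eqref{realcassels2} is purely qualitative --- it asserts divergence of the full series for each fixed $\alpha$, with no rate and no uniformity in $\alpha$ --- so nothing prevents the partial sums $\sum_{n\in A_N}\|n\alpha\|^2$ from being $o(\log N)$ at points $\alpha$ that your finite collection of major arcs does not cover, in which case $|F_N(\alpha)|\leq 2^{\#A_N}\exp\bigl(-c\sum_{n\in A_N}\|n\alpha\|^2\bigr)$ is nowhere near $2^{\#A_N}N^{-10}$. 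Hypothesis \eqref{realcassels1} does not repair this: a lower bound $\#(A\cap[M+1,2M])\gg\log\log M$ says nothing about how those elements are distributed modulo $q$, so the elements of $A\cap[N_0,N]$ could all be divisible by some $q=q(N)\to\infty$ slowly (this is consistent with \eqref{realcassels2}, which only forces \emph{infinitely many} $n\in A$ with $q\nmid n$ for each \emph{fixed} $q$), making $\sum_{n\in A_N}\|n\alpha\|^2$ small on a whole neighbourhood of $p/q$ lying in your putative minor arcs. Converting the pointwise hypotheses into the uniform decay you need is precisely where Cassels' paper does its real work, and your sketch replaces it with an unexecuted ``covering plus large deviation'' step. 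A secondary quantitative issue of the same nature: you treat the major arcs at $p/q\neq 0$ as contributing main terms with nonzero arithmetic factors, but in fact $\bigl|\prod_{n\in A_N}(1+e(np/q))\bigr|=2^{\#A_N}\prod_{n\in A_N}|\cos(\pi np/q)|$ is $o(2^{\#A_N})$ by \eqref{realcassels2}, so these arcs must be absorbed into the error term --- and to beat the main term of size $2^{\#A_N}/\sigma_N$ coming from $\alpha$ near $0$ you again need a \emph{rate} of decay that the hypotheses do not hand you for free. The skeleton is the right one, but the theorem is not proved.
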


\begin{remark}
Theorem \ref{maintheorem} is close to being a generalization of Theorem \ref{theoremrealcassels}, but does not quite succeed at doing so due to a technical issue. To be more precise (and referring to Remarks \ref{remarksqrt62}, \ref{remarkqinduction}, and \ref{remarkqinduction2} below for details), any set satisfying the hypotheses of Theorem \ref{theoremrealcassels} automatically satisfies \eqref{casselsassumption2} and \eqref{casselsassumption3}, and can be written as the disjoint union of arbitrarily many sets satisfying \eqref{casselsassumption1}, but it is not clear whether the decomposition can be chosen so that any member of this union satisfies \eqref{casselsassumption2} and \eqref{casselsassumption3}. Nevertheless, in practice it is usually easy to decompose a set satisfying \eqref{casselsassumption1}-\eqref{casselsassumption3} as a disjoint union as in Theorem \ref{maintheorem}, so Theorem \ref{maintheorem} is a sort of ``functional generalization'' of Theorem \ref{theoremrealcassels}. The converse is not true, since many naturally occurring sets satisfy \eqref{casselsassumption1} but not \eqref{realcassels1}, such as the sets occurring in the introduction of this paper (with the exception of those occurring in Theorem \ref{completenesstheorem2}).
\end{remark}

Before proving Theorem \ref{maintheorem}, we discuss some methods for checking its hypotheses.

\begin{remark}
\label{remarksyndetic2}
To check that \eqref{casselsassumption1} holds for some set $B_i$, it suffices to check that
\begin{equation}
\label{syndetic2}
\liminf_{N\to\infty} \#\big(B_i\cap \OC N{(L + 1)N}\big) \geq L
\end{equation}
for some $L$.
\end{remark}
\begin{proof}
Let $(n_k)_1^\infty$ be the unique increasing indexing of $B$, and let $k_0$ be large enough so that for all $k\geq k_0$,
\[
n_k \leq (L + 1) n_{k - L} \leq n_{k - 1} + \ldots + n_{k - L + 1} + 2n_{k - L}.
\]
Then an induction argument shows that for all $k\geq k_0$,
\[
n_k \leq \sum_{i = k_0}^{k - 1} n_i + L n_{k_0}.
\qedhere\]
\end{proof}

In particular, to check that a given set $B$ can be decomposed as the union of three pairwise disjoint sets satisfying \eqref{casselsassumption1}, it suffices to check that
\begin{equation}
\label{casselsassumption1prime}
\liminf_{N\to\infty} \#\big(B\cap \OC N{(L + 1)N}\big) \geq 3L
\end{equation}
for some $L$. Thus we have the following corollary of Theorem \ref{maintheorem}:

\begin{corollary}
\label{mainlemmacorollary}
Let $B,C\subset\N$ be two disjoint sets satisfying \eqref{casselsassumption1prime}, \eqref{casselsassumption2}, and \eqref{casselsassumption3}. Then $B\cup C$ is complete.
\end{corollary}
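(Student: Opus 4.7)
The plan is to reduce to the Main Theorem by splitting $B$ into three pairwise disjoint pieces, each of which individually satisfies \eqref{casselsassumption1}, and leaving $C$ alone as the fourth piece. Then the hypotheses (I), (II), (III) of the Main Theorem will hold for $B_1,B_2,B_3,C$, so the Main Theorem will yield completeness of $B_1\cup B_2\cup B_3\cup C = B\cup C$.

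Concretely, I would let $(n_k)_{k=1}^\infty$ be the increasing enumeration of $B$ and define
\[
B_i = \{n_k : k\equiv i\pmod 3\}, \quad i\in\{0,1,2\}.
\]
These three sets are pairwise disjoint by construction, and each is disjoint from $C$ because $B$ is. Moreover the elements of $B$ that lie in any interval $\OC{N}{(L+1)N}$ form a block of consecutive indices $n_{k_1},n_{k_1+1},\ldots,n_{k_2}$ in the enumeration of $B$, so it remains only to count how many of these consecutive indices fall in each residue class modulo $3$.

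Here the key input is the observation that among any $3L$ consecutive integers, each residue class modulo $3$ is represented exactly $L$ times. By hypothesis \eqref{casselsassumption1prime}, for all sufficiently large $N$ the block $B\cap\OC{N}{(L+1)N}$ has cardinality at least $3L$, and therefore
\[
\#\!\big(B_i\cap \OC{N}{(L+1)N}\big)\geq L \quad\text{for each } i\in\{0,1,2\}.
\]
This is exactly condition \eqref{syndetic2} for each $B_i$, so by Remark \ref{remarksyndetic2} each $B_i$ satisfies \eqref{casselsassumption1}, giving hypothesis (I) of the Main Theorem. Hypotheses (II) and (III) transfer verbatim from the assumption on $C$, and the Main Theorem then yields completeness of $B\cup C$.

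There is no serious obstacle in this argument; the only mild subtlety is making sure the consecutive-indices observation is applied to a block in the enumeration of $B$ rather than to arbitrary elements of the interval, which is automatic since $B$ is listed in increasing order. If one wanted to avoid the case $k_2-k_1+1>3L$ producing uneven residue counts, one could simply restrict attention to the first $3L$ of the indices in each such block, which is all we need for the lower bound.
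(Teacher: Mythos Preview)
Your proposal is correct and matches the paper's approach exactly: the paper states the corollary immediately after observing (in the paragraph preceding it) that \eqref{casselsassumption1prime} suffices to decompose $B$ into three disjoint pieces each satisfying \eqref{casselsassumption1}, and your index-mod-$3$ split is precisely the decomposition the paper has in mind (via Remark~\ref{remarksyndetic2}). The details you supply---that $B\cap\OC{N}{(L+1)N}$ is a block of consecutive indices and that any $3L$ consecutive indices hit each residue class at least $L$ times---are the ones the paper leaves implicit.
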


Note that any sublacunary set automatically satisfies \eqref{casselsassumption1prime} for all $L$. In fact we can say more; for this purpose we introduce some new terminology. Given $\lambda > 1$, a sequence $(n_k)_1^\infty$ is called \emph{$\lambda$-sublacunary} if $n_{k + 1}/n_k \leq \lambda$ for all $k$ sufficiently large. Note that $(n_k)_1^\infty$ is sublacunary if and only if it is $\lambda$-sublacunary for all $\lambda > 1$. We call $(n_k)_1^\infty$ \emph{weakly sublacunary} if it is $\lambda$-sublacunary for some $\lambda > 1$. As before, a set is called $\lambda$-sublacunary or weakly sublacunary if its unique increasing indexing has that property. Then we have:

\begin{remark}
\label{remarksqrt62}
Any $\sqrt[3]2$-sublacunary set satisfies \eqref{casselsassumption1prime} with $L = 1$. In particular, this includes the class of sets satisfying \eqref{realcassels1}.
\end{remark}

When checking condition \eqref{casselsassumption2}, it is useful for $C$ to have some multiplicative structure in the form of a factorization:

\begin{remark}
\label{remarkAI}
If $C_1$ is a weakly sublacunary set and $C_2$ is an infinite set, then the set $C = C_1 C_2$ satisfies \eqref{casselsassumption2} for all irrational $\alpha\in\T$.
\end{remark}
\begin{proof}
Fix $m_0\in C_1$ and $\lambda > 1$ such that for all $m\geq m_0$, $(m,\lambda m)\cap C_1\neq\emptyset$. Fix $N\in\N$. By the pigeonhole principle, there exist $n_1,n_2\in C_2$, $n_1,n_2\geq N$, such that $\|(n_2 - n_1)\alpha\| \leq 1/(2 m_0)$. Let $m$ be the largest element of $C_1$ such that $\|(n_2 - n_1)\alpha\| \leq 1/(2m)$, and note that $m\geq m_0$. Then since $(m,\lambda m)\cap C_1\neq\emptyset$, the maximality of $m$ implies that
\[
\|m(n_2 - n_1)\alpha\| = m\|(n_2 - n_1)\alpha\| > 1/(2\lambda).
\]
Thus there exists $i = 1,2$ such that $\|m n_i\alpha\| > 1/(4\lambda)$. Since $N$ was arbitrary, there exist infinitely many $n\in C$ such that $\|n\alpha\| > 1/(4\lambda)$. This completes the proof.
\end{proof}

\begin{remark}
\label{remarkqinduction}
To check \eqref{casselsassumption3} it suffices to show that for all $q\geq 2$, there exists $r < q$ such that
\begin{equation}
\label{qinduction}
\FS(C\cap r\N) + q\Z = r\Z.
\end{equation}
\end{remark}
\begin{proof}
Suppose this holds, and fix $q\in\N$. Let $q = q_0 > q_1 \ldots > q_k = 1$ be a decreasing sequence such that for each $i = 0,\ldots,k - 1$,
\[
\FS(C\cap q_{i + 1}\N) + q_i\Z = q_{i + 1}\Z.
\]
Clearly, we also have
\[
\FS\big(C\cap (q_{i + 1}\N\butnot q_i\N)\big) + q_i\Z = q_{i + 1}\Z
\]
and thus
\[
\FS(C) + q\Z \supset \sum_{i = 0}^{k - 1} \FS\big(C\cap (q_{i + 1}\N\butnot q_i\N)\big) + q_0\Z = q_k\Z = \Z.
\qedhere\]
\end{proof}

\begin{remark}
\label{remarkqinduction2}
For fixed $r < q$, to check \eqref{qinduction} it suffices to show that
\begin{equation}
\label{qinduction2}
\#\{n\in C : \gcd(n,q) = r\} \geq q/r - 1.
\end{equation}
\end{remark}
\begin{proof}
Let $D = \{n\in C : \gcd(n,q) = r\} \subset C\cap r\N$ and write $D = \{n_1,\ldots,n_k\}$, where $k\geq q/r - 1$. For each $i = 0,\ldots,k$ write $S_i = \FS(\{n_1,\ldots,n_i\}) + q\Z$. Fix $i = 0,\ldots,k - 1$. If $S_i$ is forward invariant under translation by $n_{i + 1}$, then the condition $\gcd(n_{i + 1},q) = r$ guarantees that $S_i = r\Z$, completing the proof. Otherwise, there exists $m\in S_i$ such that $m + n_{i + 1}\notin S_i$, which implies that $\#(S_{i + 1}/q\Z) > \#(S_i/q\Z)$. Since $\#(S_0/q\Z) = 1$, an induction argument gives $\#(S_i/q\Z) \geq i + 1$ for all $i$, and in particular $S_{q/r - 1}/q\Z = r\Z/q\Z$, completing the proof.
\end{proof}

Combining with a pigeonhole argument yields the following:

\begin{remark}
\label{remarkqinduction3}
For fixed $q\geq 2$, to prove the existence of $r < q$ satisfying \eqref{qinduction} it suffices to show that
\begin{equation}
\label{qinduction3}
\#\{n\in C : q \not\divides n\} =
\#\{n\in C : \gcd(n,q) < q\} > \sum_{\substack{r < q \\ r \divides q}} \left(\frac qr - 2\right).
\end{equation}
In particular, if \eqref{qinduction3} holds for all $q\geq 2$, then \eqref{casselsassumption3} holds.
\end{remark}

Note that Remark \ref{remarkqinduction3} shows that any set satisfying \eqref{realcassels2} also satisfies \eqref{casselsassumption3}.

\subsection{Proof of Theorem \ref{maintheorem}}
The first main idea of the proof of Theorem \ref{maintheorem} is to combine a lemma of Burr and Erd\doubleacute os with a theorem of Furstenberg, Weiss, and the first-named author. These results are stated as follows:

\begin{lemma}[{\cite[Lemma 3.2]{BurrErdos}}]
\label{lemmasyndetic}
If $B_i \subset \N$ is a set satisfying \eqref{casselsassumption1}, then $\FS(B_i)$ is syndetic.
\end{lemma}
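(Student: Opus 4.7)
Enumerate $B := B_i$ in increasing order as $n_1 < n_2 < \cdots$, set $s_k := n_1 + \cdots + n_k$ with $s_0 := 0$, and let $M$ denote the finite supremum appearing in \eqref{casselsassumption1}, so that $n_k \leq s_{k-1} + M$ for every $k \geq 1$ (in particular $n_1 \leq M$). The plan is to establish, by induction on $k$, the following approximation statement: for every $k$ and every integer $N \in [0, s_k]$ there exists $x \in \FS(\{n_1, \ldots, n_k\}) \cup \{0\}$ with $|x - N| \leq M$. Letting $k \to \infty$ then shows that every nonnegative integer lies within distance $M$ of $\FS(B) \cup \{0\}$, and feeding in $N + M$ in place of $N$ converts this two-sided approximation into a one-sided one: for all sufficiently large $N$ the interval $[N, N + 2M]$ meets $\FS(B)$. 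This gives syndeticity of $\FS(B)$ with constant $2M$ (after a harmless adjustment for small $N$).

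The inductive step is a greedy case split. Given $N \in [0, s_k]$, if $N \leq s_{k-1}$ I apply the inductive hypothesis at level $k - 1$ directly. Otherwise $s_{k-1} < N \leq s_k$, and I set $N' := N - n_k$. Here the hypothesis \eqref{casselsassumption1} is used in its essential form: $n_k \leq s_{k-1} + M$ forces $N' \geq -M$, while $N' \leq s_{k-1}$ is trivial. If $N' \geq 0$ the inductive hypothesis produces $x' \in \FS(\{n_1, \ldots, n_{k-1}\}) \cup \{0\}$ with $|x' - N'| \leq M$, and $x := x' + n_k$ is the desired element of $\FS(\{n_1, \ldots, n_k\})$; if $-M \leq N' < 0$ I take $x := n_k$ itself, for which $|x - N| = -N' \leq M$.

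The main subtlety is bookkeeping: the inductive approximant $x'$ must be realized as a subset sum of $\{n_1, \ldots, n_{k-1}\}$ rather than of $\{n_1, \ldots, n_k\}$, so that adjoining $n_k$ does not double-count it. This is precisely why the claim is indexed by $k$ and the induction proceeds along the generators rather than along $N$. The base case $k = 1$ is immediate from $\FS(\{n_1\}) \cup \{0\} = \{0, n_1\}$ together with $n_1 \leq M$. No Diophantine or harmonic-analytic input is required; the lemma is a purely combinatorial consequence of the non-lacunarity encoded in \eqref{casselsassumption1}, and in particular stands in sharp contrast to the use of \eqref{casselsassumption2} elsewhere in the proof of the Main Theorem.
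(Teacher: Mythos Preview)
Your proof is correct. Both your argument and the paper's exploit the same inequality $n_k \le s_{k-1} + M$ by elementary means, but they are organized dually. The paper, for a given target $n$, runs a \emph{top-down} greedy algorithm---repeatedly take the largest unused element of $B$ that still fits under $n$---and then performs a two-case analysis (on whether the smallest unused element equals $\min(B)$) to read off the syndeticity bound directly. You instead run a \emph{bottom-up} induction on $k$, proving that every $N\in[0,s_k]$ lies within $M$ of $\FS(\{n_1,\ldots,n_k\})\cup\{0\}$, with the split $N\le s_{k-1}$ versus $N>s_{k-1}$ deciding whether $n_k$ is included. Your version has a cleaner inductive structure and avoids the paper's somewhat delicate terminal case analysis; the price is that you obtain a two-sided bound $|x-N|\le M$ and hence syndeticity constant $2M$, whereas the paper's greedy argument yields the one-sided bound with constant $\max(\min(B),M)=M$ (since $n_1\le M$) directly.
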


\begin{lemma}[{\cite[Theorem I]{BFW}}]
\label{lemmaBFW}
If $S_1,S_2 \subset \N$ are syndetic sets (or more generally, sets with positive upper Banach density), then $S_1 + S_2$ is a piecewise Bohr set.
\end{lemma}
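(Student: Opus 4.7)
The plan is to deduce this via the Furstenberg correspondence principle together with an analysis of the Kronecker factors of the resulting measure-preserving systems, using that in a compact abelian group the convolution of two non-negative non-trivial $L^1$ functions is continuous and non-zero on a non-empty open set. This is the natural ergodic-theoretic route to the piecewise-Bohr structure; the combinatorial statement is encoded as a correlation statement, and the Bohr structure emerges from the algebraic structure of the Kronecker factor.

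First I would apply the Furstenberg correspondence principle. For each $i = 1,2$, positive upper Banach density of $S_i$ yields a measure-preserving $\Z$-system $(X_i, \mathcal{B}_i, \mu_i, T_i)$ together with a measurable set $A_i \subset X_i$ satisfying $\mu_i(A_i) = d^*(S_i) > 0$, and a point $x_i \in X_i$ whose return-time set $\{n : T_i^n x_i \in A_i\}$ is contained in $S_i$ along a thick family of windows $[M_k, M_k + N_k]$ with $N_k \to \infty$. The key feature that will get used is that for any finite set of integer shifts, the $\mu_i$-correlations of $A_i$ with its translates reproduce averaged densities of return times along these windows.

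Next I would pass to the Kronecker factor $\pi_i : X_i \to Z_i$, where $Z_i$ is a compact abelian group on which $T_i$ acts by rotation by some element $\alpha_i$. Set $g_i = \mathbb{E}(\mathbf{1}_{A_i} \mid Z_i)$, a non-negative function in $L^1(Z_i)$ with $\int g_i \, dm_i = \mu_i(A_i) > 0$. Embedding $Z_1$ and $Z_2$ into a common compact abelian group $Z$ (for example, the closure of $\{n(\alpha_1,\alpha_2) : n \in \Z\}$ in $Z_1 \times Z_2$, or equivalently a Bohr compactification factor), the convolution $g_1 * g_2$ is a continuous non-negative function on $Z$ with positive integral. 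Hence the sublevel set $\{z \in Z : (g_1 * g_2)(z) > \varepsilon\}$ contains a non-empty open set $U \subset Z$ for some $\varepsilon > 0$, and the preimage $B := \{n \in \Z : n\cdot(\alpha_1,\alpha_2) \in U\}$ is a Bohr set.

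Finally I would transfer the lower bound on $g_1 * g_2$ back to a concrete sumset inclusion. For each $n \in B$ and each decomposition $n = n_1 + n_2$ on which the convolution realizes its pointwise lower bound, the product correlation $\mu_1(A_1 \cap T_1^{-n_1} A_1) \cdot \mu_2(A_2 \cap T_2^{-n_2} A_2)$ is bounded below by a positive constant depending only on $\varepsilon$. A pigeonhole argument along the Banach-density windows $[M_k, M_k + N_k]$ for both $S_1$ and $S_2$ simultaneously then produces, for each sufficiently large $k$, a sub-interval $J_k \subset [M_k, M_k + N_k]$ of length tending to infinity, such that every $n \in B \cap J_k$ admits a representation $n = s_1 + s_2$ with $s_i \in S_i$. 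The union $T = \bigcup_k J_k$ is thick, so $B \cap T \subset S_1 + S_2$ exhibits $S_1 + S_2$ as piecewise Bohr. The main obstacle I anticipate is Step~4: the convolution argument produces abundance in measure only, and coordinating the two Banach-density windows to extract an honest thick set of integers (rather than a mere density statement) requires a careful simultaneous pigeonhole, which is where the quantitative work of the proof concentrates.
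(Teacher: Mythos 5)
First, a point of context: the paper does not prove this lemma at all --- it is imported verbatim as Theorem I of Bergelson--Furstenberg--Weiss \cite{BFW} and used as a black box in the proof of the Main Theorem (the only cited lemma the authors reprove is the Burr--Erd\H{o}s syndeticity lemma). So there is no in-paper proof to compare against; what you have written is a reconstruction of the BFW argument itself. Your overall strategy (correspondence principle, Kronecker factors, convolution of the conditional expectations, transfer back to a thick set) is indeed the right skeleton of that proof.

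However, your Step 4 contains a genuine gap. To certify $n \in S_1 + S_2$ you must show $S_1 \cap (n - S_2) \neq \emptyset$, and the dynamical quantity controlling this is the \emph{cross}-correlation
\[
\frac{1}{|J|}\sum_{m \in J} \mathbf{1}_{A_1}(T_1^{m}x_1)\,\mathbf{1}_{A_2}(T_2^{\,n-m}x_2),
\]
whose averaged value is what the Kronecker factors and the convolution $g_1 * \check g_2$ actually compute (note the reflection: one must apply the correspondence principle to $-S_2$, or equivalently run $T_2$ backwards, to turn the sumset into an intersection of translates). What you bound instead is the product of the two \emph{self}-correlations $\mu_1(A_1 \cap T_1^{-n_1}A_1)\cdot\mu_2(A_2\cap T_2^{-n_2}A_2)$ over decompositions $n = n_1 + n_2$. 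Positivity of those only tells you that $n_1$ lies robustly in $S_1 - S_1$ and $n_2$ in $S_2 - S_2$, i.e.\ that $n \in (S_1 - S_1) + (S_2 - S_2)$ --- a much weaker conclusion. (Also, the convolution $(g_1 * g_2)(z)$ is an integral over the group, not a sum over integer splittings $n = n_1 + n_2$, so ``the decomposition on which the convolution realizes its lower bound'' is not well defined.) Separately, your final pigeonhole --- upgrading positive averaged cross-correlation to the statement that \emph{every} $n$ in a Bohr set intersected with a thick set has a representation --- is the technical heart of \cite{BFW} and cannot be waved through; it requires the uniformity of the almost periodic component of the cross-correlation sequence, which is exactly what the Kronecker-factor analysis is set up to deliver. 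As the lemma is quoted, not proved, in this paper, the appropriate fix here is simply to cite \cite{BFW} rather than to supply a proof.
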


Note that the converse of Lemma \ref{lemmasyndetic} also holds; see \cite[Theorem 4.1]{AHS}. Since the proof of Lemma \ref{lemmasyndetic} is easy, we include it for completeness:

\begin{proof}[Proof of Lemma \ref{lemmasyndetic}]
Fix $n\in\N$, and define a sequence $(m_j)_1^J$ in $B_i$ recursively using the ``greedy algorithm''
\begin{equation}
\label{greedyalgorithm}
m_j = \max\{m\in B_i\butnot\{m_1,\ldots,m_{j - 1}\} : m_1 + \ldots + m_{j - 1} + m \leq n\},
\end{equation}
where it is understood that the algorithm terminates once the set on the right hand side of \eqref{greedyalgorithm} is empty. Clearly the algorithm always eventually terminates and satisfies $m_1 > m_2 > \ldots > m_J$. Let $D = \{m_1,\ldots,m_J\}$, and let $m = \min(B_i\butnot D)$.
\begin{itemize}
\item[Case 1:] $m = \min(B_i)$. Then since the algorithm terminated at step $J$, we must have $m_1 + \ldots + m_J + m > n$, and thus $n\in \FS(B_i) + \{0,\ldots,\min(B_i)\}$.
\item[Case 2:] $m > \min(B_i)$. Let $j$ be the smallest integer such that $m_j < m$. Since the algorithm selected $m_j$ rather than $m$ at the $j$th step, we must have $m_1 + \ldots + m_{j - 1} + m > n$. On the other hand, letting $\synd = \sup\left\{\big(\ell - \sum\{k\in B_i : k < \ell\}\big) : \ell\in B_i\right\} < \infty$ we have
\[
n \geq m_1 + \ldots + m_J = m_1 + \ldots + m_{j - 1} + \sum\{k\in B_i : k < m\} \geq m_1 + \ldots + m_{n - 1} + m - \synd,
\]
and thus $n + k\in\FS(B_i)$ for some $k = 0,\ldots,\synd$.
\qedhere\end{itemize}
%
\end{proof}

Now let $B_1,B_2,B_3,C$ be as in Theorem \ref{maintheorem}. By Lemma \ref{lemmasyndetic}, the assumption \eqref{casselsassumption1} implies that the sets $\FS(B_1),\FS(B_2),\FS(B_3)$ are syndetic. Let $B_{12} = B_1\cup B_2$. Applying Lemma \ref{lemmaBFW}, we see that $\FS(B_{12}) = \FS(B_1) + \FS(B_2)$ contains a piecewise Bohr set. So there exist $d\in\N$, $\alpha\in \T^d = \R^d/\Z^d$, an open set $U\subset \T^d$, and a thick set $J\subset\N$ such that
\begin{equation}
\label{Udef}
\FS(B_{12})\supset J\cap \{n\in\N : n\alpha\in U\}\neq\emptyset.
\end{equation}
Now let
\begin{equation}
\label{Gdef}
G = \bigcap_{N\in\N} \cl{\FS(\{n\in C, \; n\geq N\})\alpha}.
\end{equation}
\begin{claim}
\label{claim1}
$G$ is a semigroup.
\end{claim}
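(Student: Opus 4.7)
The plan is to show $G$ is closed under addition in $\T^d$ by a standard ``tail approximation'' argument, exploiting the fact that the defining intersection in \eqref{Gdef} runs over all thresholds $N$.

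First I would reformulate the definition: $g\in G$ if and only if for every $N\in\N$ and every neighborhood $V$ of $g$ there exists a nonempty finite set $F\subset C\cap [N,\infty)$ with $\Sigma(F)\alpha\in V$. This is just unpacking membership in the closure of $\FS(\{n\in C : n\geq N\})\alpha$ together with the intersection over $N$.

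Next, given $g,h\in G$, an arbitrary $N\in\N$, and an arbitrary neighborhood $V$ of $g+h$, I would choose neighborhoods $V_1$ of $g$ and $V_2$ of $h$ with $V_1+V_2\subset V$. Using $g\in G$ (with threshold $N$), pick a nonempty finite $F_1\subset C\cap [N,\infty)$ with $\Sigma(F_1)\alpha\in V_1$. Then set $M=\max(F_1)+1$ and use $h\in G$ (with threshold $M$) to pick a nonempty finite $F_2\subset C\cap [M,\infty)$ with $\Sigma(F_2)\alpha\in V_2$. The crucial point, which is what makes the argument work, is that the intersection in the definition of $G$ lets us shift the threshold upward so that $F_2$ is disjoint from $F_1$.

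Finally, since $F_1\cap F_2=\emptyset$, the union $F=F_1\cup F_2$ is a finite subset of $C\cap[N,\infty)$ with $\Sigma(F)=\Sigma(F_1)+\Sigma(F_2)$, hence $\Sigma(F)\alpha\in V_1+V_2\subset V$. This shows $g+h$ lies in $\cl{\FS(\{n\in C: n\geq N\})\alpha}$ for every $N$, i.e.\ $g+h\in G$. There is no real obstacle: the only nontrivial ingredient is the disjointness trick, and that is immediate from the tail formulation. The hypotheses (I)--(III) on $B_1,B_2,B_3,C$ play no role in this claim; it is a general fact about $\FS$-tail limit sets.
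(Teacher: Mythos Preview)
Your proposal is correct and follows essentially the same approach as the paper's proof: both pick $F_1$ with $\min(F_1)\geq N$ approximating the first element, then exploit the tail structure of $G$ to choose $F_2$ with $\min(F_2)\geq \max(F_1)+1$ approximating the second, so that $F_1$ and $F_2$ are disjoint and $\Sigma(F_1\cup F_2)=\Sigma(F_1)+\Sigma(F_2)$. The only cosmetic difference is that the paper works with $\epsilon$-balls in the metric on $\T^d$ rather than abstract neighborhoods.
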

\begin{subproof}
Fix $n_1,n_2\in G$, $\epsilon > 0$, and $N\in\N$. By definition, there exists $F_1\subset C$ such that $\min(F_1)\geq N$ and $\|\Sigma(F_1)\alpha - n_1\| \leq\epsilon$. By definition, there exists $F_2\subset C$ such that $\min(F_2)\geq \max(F_1) + 1$ and $\|\Sigma(F_2)\alpha - n_2\| \leq\epsilon$. Let $F = F_1\cup F_2$. Then $\min(F)\geq N$ and $\|\Sigma(F)\alpha - (n_1 + n_2)\| \leq 2\epsilon$. Since $\epsilon,N$ were arbitrary, $n_1 + n_2\in G$.
\end{subproof}

Since every compact subsemigroup of a group is itself a group,\Footnote{This fact is proven in \cite[Theorem 1]{Numakura}, but for metric spaces it can be proven more easily as follows: Let $G$ be a compact semigroup of a group, with the group operation written as $+$. Fix $\beta\in G$ and let $(n_k)_1^\infty$ be a sequence such that the sequence $(n_k \beta)_1^\infty$ converges. Without loss of generality suppose that $n_{k + 1} \geq n_k + 2$. Then $-\beta = \lim_{k\to\infty} (n_{k + 1} - n_k - 1)\beta \in G$.} $G$ is a group and thus by the closed subgroup theorem (e.g. \cite[Theorem 20.12]{Lee}), $G$ is an embedded Lie subgroup of $\T^d$, which implies that $G$ takes the form $V/\Z^d + F$, where $V\subset\R^d$ is a rational subspace and $F\subset\T^d$ is a finite subgroup. It follows that $\T^d/G$ is a torus, so there exist continuous homomorphisms $\pi_1,\ldots,\pi_k:\T^d\to\T$ such that $G = \bigcap_1^k \pi_i^{-1}(0)$.

\begin{claim}
\label{claim2}
There exists $q\geq 1$ such that $q\alpha\in G$.
\end{claim}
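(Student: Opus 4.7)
The plan is to argue by contradiction and exploit hypothesis (II) of the Main Theorem. Since $G = \bigcap_{i=1}^k \pi_i^{-1}(0)$, the conclusion $q\alpha \in G$ is equivalent to $q\pi_i(\alpha) = 0$ in $\T$ for all $i$, i.e. to each $\pi_i(\alpha)$ being rational. Assuming no such $q$ exists, some $\beta := \pi_j(\alpha)$ is irrational, and hypothesis (II) gives $\sum_{n \in C} \|n\beta\|_\T = \infty$.

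My first step is to convert the structural constraint $G \subset \pi_j^{-1}(0)$ into a uniform quantitative bound on finite sums. Write $F_N := \cl{\FS(\{n \in C : n \geq N\})\alpha}$, so that $(F_N)_N$ is a decreasing sequence of compact subsets of $\T^d$ with $\bigcap_N F_N = G$. Since $\pi_j$ is a continuous (hence Lipschitz) homomorphism and $\pi_j(G) = \{0\}$, a standard compactness argument shows that for every $\epsilon > 0$ there exists $N = N(\epsilon)$ such that $\pi_j(F_N) \subset [-\epsilon,\epsilon] + \Z$; equivalently, for every finite nonempty $F \subset C \cap [N,\infty)$,
\[
\|\Sigma(F)\beta\|_\T \leq \epsilon.
\]
Specializing to $F = \{n\}$ yields $\|n\beta\|_\T \leq \epsilon$ for all $n \in C \cap [N,\infty)$.

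Now I would fix $\epsilon \in (0, 1/4)$, choose $N$ as above, and write $n\beta = m_n + r_n$ with $m_n \in \Z$ and $|r_n| = \|n\beta\|_\T \leq \epsilon$ for $n \in C \cap [N,\infty)$. Since $\sum_{n \in C \cap [N,\infty)} |r_n| = \infty$, one of $\sum_{r_n > 0} r_n$ or $\sum_{r_n < 0} (-r_n)$ diverges; assume without loss of generality the former. Enumerate $\{n \in C \cap [N,\infty) : r_n > 0\} = \{n_1 < n_2 < \cdots\}$ and form the partial sums $s_k := r_{n_1} + \cdots + r_{n_k}$. Then $(s_k)$ is strictly increasing, tends to $+\infty$, and has jumps of size at most $\epsilon$, so there exists $k$ with $s_k \in (\epsilon, 2\epsilon] \subset (\epsilon, 1/2]$. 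For $F = \{n_1,\ldots,n_k\}$ we have $\Sigma(F)\beta \equiv s_k \pmod{\Z}$, whence $\|\Sigma(F)\beta\|_\T = s_k > \epsilon$, contradicting the uniform bound.

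The main obstacle I anticipate is the first step: translating the abstract fact that the nested intersection $G$ lies in $\pi_j^{-1}(0)$ into a \emph{uniform} bound on $\|\Sigma(F)\beta\|_\T$ valid for all finite $F$ drawn from a sufficiently deep tail of $C$. Once this uniform bound is in hand, the contradiction follows from an elementary real-line fact: a strictly increasing sequence with jumps at most $\epsilon$ cannot escape the shrunken neighborhood $[-\epsilon,\epsilon] + \Z$ without producing an intermediate value in the gap $(\epsilon, 1/2]$.
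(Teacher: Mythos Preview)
Your proof is correct and is essentially the paper's argument run in contrapositive: the paper constructs, for every $N$, a finite $F_N\subset C\cap[N,\infty)$ with $\|\Sigma(F_N)\beta\|\geq 1/4$ and then passes to a limit point in $G$, whereas you first use compactness of the nested $F_N$'s to pull back a uniform bound $\|\Sigma(F)\beta\|\leq\epsilon$ on the tail and then build a single violating $F$. The one minor difference is that the paper does not need the singleton bound $\|n\beta\|\leq\epsilon$ to control jump sizes---it uses the a~priori estimate $|\beta_n|\leq 1/2$ and aims for the interval $[1/4,3/4]$ instead of $(\epsilon,2\epsilon]$---so your route through the uniform bound is used twice where once would suffice, but this is purely cosmetic.
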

\begin{subproof}
Suppose not. Then there exists $i = 1,\ldots,k$ such that $\beta = \pi_i(\alpha)$ is irrational. By the assumption \eqref{casselsassumption2}, the series $\sum_{n\in C} \|n\beta\|$ diverges. For each $n\in C$, let $\beta_n\in [-1/2,1/2]$ be a representative of $n\beta\in\T$, so that $\sum_{n\in C} \|n\beta\| = \sum_{n\in C} |\beta_n|$. Let $C_+ = \{n\in C : \beta_n \geq 0\}$, and without loss of generality, suppose that the series $\sum_{n\in C_+} \beta_n$ diverges. Fix $N\in\N$, and let $F_N\subset C_+$ be a finite set which is minimal with respect to the following properties: $\min(F_N)\geq N$ and $\sum_{n\in F_N} \beta_n \geq 1/4$. Then $1/4\leq \sum_{n\in F_N}\beta_n \leq 3/4$, so $\|\Sigma(F_N)\beta\| \geq 1/4$. Since $\T^d$ is compact, we can find a convergent subsequence $\Sigma(F_N)\alpha\to x\in G$; then $\|\pi_i(x)\|\geq 1/4$. But since $x\in G$ and $G = \bigcap_1^k \pi_i^{-1}(0)$, we must have $\pi_i(x) = 0$, a contradiction.
\end{subproof}
Let $q$ be as in Claim \ref{claim2}, and let $H = G + \{0,\ldots,q - 1\}\alpha \supset \N\alpha$. By the assumption \eqref{casselsassumption3}, $\FS(C) + q\Z = \Z$, so $\FS(C)\alpha + G = H$. Then it follows from \eqref{Gdef} that $H$ is contained in the closure of $\FS(C)\alpha$. In particular, $H \subset \FS(C)\alpha + (U\cap H)$, where $U$ is as in \eqref{Udef}. Since $H$ is compact, there exists a finite set $F\subset \FS(C)$ such that $H \subset F\alpha + U$. Now fix $n \geq \max(F)$. Then $n\alpha\in H \subset F\alpha + U$, so there exists $m\in F$ such that $(n - m)\alpha\in U$. If $n - m\in J$, then by \eqref{Udef} we have $n - m\in \FS(B_{12})$ and thus $n\in\FS(B_{12}\cup C)$. So
\[
\FS(B_{12}\cup C) \supset \{n : n - m\in J \all m = 0,\ldots,\max(F)\}.
\]
Since $J$ contains arbitrarily large intervals, so does $\FS(B_{12}\cup C)$. Thus since $\FS(B_3)$ is syndetic, it follows that $\FS(B\cup C) = \FS(B_{12}\cup C) + \FS(B_3)$ is cofinite. This completes the proof of Theorem \ref{maintheorem}.

\subsection{Proof of Theorem \ref{completenesstheorem1fin}}
Let $M = N - 3(a - 1)$ and let
\begin{align*}
B &= \{a^n b_m : n\in\N, \; m = M + 1,\ldots,N\}\\
C &= \{a^n b_m : n\in\N, \; m = 0,\ldots,M\}.
\end{align*}
Then \eqref{casselsassumption1prime} is satisfied with $L = a - 1$, and by Remark \ref{remarkAI} (applied with $C_1 = C$ and $C_2 = a^\N$), \eqref{casselsassumption2} holds for all irrational $\alpha\in\T$. Moreover, by assumption (I) we have $B\cap C = \emptyset$. So to apply Corollary \ref{mainlemmacorollary}, we need to demonstrate \eqref{casselsassumption3}, to which end we will utilize Remarks \ref{remarkqinduction} and \ref{remarkqinduction2}. Thus, we fix $q\geq 2$, aiming to find $r < q$ satisfying \eqref{qinduction}. First, suppose that there is a prime $p$ dividing $q$ which does not divide $a$. By assumption (II), there exists $m = 0,\ldots,M$ such that $p$ does not divide $b_m$. Then for all $n\in\N$, we have $\gcd(a^n b_m,q) < q$, so by Remark \ref{remarkqinduction3} we get \eqref{qinduction}.

On the other hand, suppose that every prime dividing $q$ divides $a$; then $q$ divides $a^n$ for all sufficiently large $n$. Let $n$ be the largest integer such that $q$ does not divide $a^n$. Applying Remark \ref{remarkqinduction2} with $q$ replaced by $a$, by assumption (III) we have $\FS(\{b_m : m = 0,\ldots,M\}) + a\Z = \Z$ and thus
\begin{align*}
\FS(\{a^n b_m : m = 0,\ldots,M\}) + q\Z &= \FS(\{a^n b_m : m = 0,\ldots,M\}) + a^{n + 1}\Z + q\Z\\
&= a^n\Z + q\Z = \gcd(a^n,q)\Z,
\end{align*}
so \eqref{qinduction} holds with $r = \gcd(a^n,q)$.


\subsection{Proof of Theorem \ref{completenesstheorem2}}
We first need to recall a result of Freeman \cite{Freeman2}. Let $\PP_\R$ denote the set of all nonconstant polynomials (with real coefficients), and let $\PP_\R^k$ denote the set of all nonconstant polynomicals of degree $\leq k$. A finite sequence of polynomials $h_1,\ldots,h_s\in\PP_\R$ will be said to satisfy the \emph{irrationality condition} if the set of coefficients of nonconstant terms of the polynomials $h_1,\ldots,h_s$ contains at least two elements which are linearly independent over $\Q$ (cf. \cite[Definition on p.210]{Freeman2}). The sequence will be said to be \emph{positive-definite} if all leading coefficients are positive and all degrees are even.

\begin{theorem}[Corollary of {\cite[Theorem 2]{Freeman2}}]
\label{theoremfreeman}
For all $k\in\N$, there exists $s = s_1(k)\in\N$ such that for every positive-definite sequence $h_1,\ldots,h_s\in\PP_\R^k$ which satisfies the irrationality condition, for all $\epsilon > 0$, there exists $M_0 > 0$ such that for all $\R\ni M\geq M_0$, there exist $z_1,\ldots,z_s\in\Z$ for which
\[
\left|\sum_{i = 1}^s h_i(z_i) - M\right| \leq \epsilon.
\]
Moreover, we may take $s_1(k)$ to satisfy
\[
s_1(k) \sim 4k\log(k).
\]
\end{theorem}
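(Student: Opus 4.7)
The plan is to obtain Theorem \ref{theoremfreeman} as an essentially direct restatement of Freeman's Theorem 2 in \cite{Freeman2}, since the statement is explicitly advertised as a corollary of that result. Freeman's theorem is a Davenport--Heilbronn style application of the circle method: for a positive-definite system of polynomials in $s \geq s_1(k)$ variables satisfying the irrationality condition, it produces an asymptotic lower bound for the number of integer tuples $(z_1, \ldots, z_s)$ with $\left|\sum_i h_i(z_i) - M\right| \leq \epsilon$, once $M$ is large enough in terms of $\epsilon$ and the polynomials. The existence of at least one such tuple is precisely the content of Theorem \ref{theoremfreeman}, so the task reduces to matching the hypotheses and transporting the quantitative bound on $s$.

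First, I would verify the hypothesis-matching. Positive leading coefficients together with even degrees guarantee that each $h_i(z) \to +\infty$ as $|z| \to \infty$, so that $\sum_i h_i(z_i)$ is bounded below on $\Z^s$ and attains arbitrarily large values; this is the standard positive-definite setup needed to localize the representation count on a compact piece of the circle. The irrationality condition that two nonconstant coefficients be linearly independent over $\Q$ is the nondegeneracy assumption that forces the minor-arc contribution to be negligible, ruling out the degenerate case in which all $h_i$ factor through a single one-dimensional rational sublattice of coefficient space. If Freeman's statement is written only for forms with $h_i(0) = 0$, we reduce to that case by replacing the target $M$ with $M - \sum_i h_i(0)$; this shift is absorbed into $M_0$ and affects none of the hypotheses.

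Second, the quantitative bound $s_1(k) \sim 4k\log k$ is obtained in \cite{Freeman2} by combining the Davenport--Heilbronn framework with Vinogradov-type mean-value estimates on exponential sums (which yield the $\log k$ improvement over the classical Weyl-bound rate), and I would simply quote this bound rather than reprove it. The only genuine ``obstacle'' is bibliographic: checking that Freeman's statement yields simultaneously (i) a value of $s_1(k)$ that is uniform over all polynomial systems of degree $\leq k$ satisfying the hypotheses, and (ii) the $\epsilon$-dependent threshold $M_0 = M_0(\epsilon, h_1, \ldots, h_s)$, with $\epsilon$ and the particular polynomials allowed to influence $M_0$ but not $s_1(k)$. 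Both features are standard outputs of the Davenport--Heilbronn method in this setting and are present in \cite{Freeman2}, so no further argument beyond careful citation is needed.
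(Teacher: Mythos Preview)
Your proposal is correct and matches the paper's treatment: the paper does not give an independent proof of Theorem~\ref{theoremfreeman} but simply records it as a corollary of \cite[Theorem~2]{Freeman2}, so the only work required is exactly the hypothesis-matching and quotation of the bound $s_1(k)\sim 4k\log k$ that you describe.
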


By taking the polynomials $h_1,\ldots,h_s$ to be of the form $h_i(x) = P_i(x^2 + 1)$, we get the following corollary:

\begin{corollary}
\label{corollaryfreeman}
For all $k\in\N$, there exists $s = s_2(k)\in\N$ such that for every sequence $P_1,\ldots,P_s\in\PP_\R^k$ which satisfies the irrationality condition and whose leading coefficients are positive, for all $\epsilon > 0$, there exists $M_0 > 0$ such that for all $\R\ni M\geq M_0$, then there exist $n_1,\ldots,n_s \in\N$ for which
\[
\left|\sum_{i = 1}^s P_i(n_i) - M\right| \leq \epsilon.
\]
Moreover, we may take $s_2(k)$ to satisfy
\[
s_2(k) = 2s_1(k) \sim 8k\log(k).
\]
\end{corollary}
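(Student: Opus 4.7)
The strategy is dictated by the hint: set $h_i(x) := P_i(x^2+1)$ and apply Theorem \ref{theoremfreeman} to the sequence $(h_1, \ldots, h_s)$, then convert the resulting integer tuple $(z_1,\ldots,z_s) \in \Z^s$ into a natural-number tuple by setting $n_i := z_i^2+1 \in \N$. This substitution automatically gives $P_i(n_i) = h_i(z_i)$, so the approximation transfers verbatim. The only work is to check that $(h_1,\ldots,h_s)$ satisfies the hypotheses of Theorem \ref{theoremfreeman}.

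Positive-definiteness is immediate: if $P_i(y) = a_{i,d_i} y^{d_i} + \cdots$ with $a_{i,d_i} > 0$ and $d_i = \deg(P_i) \leq k$, then $h_i(x) = a_{i,d_i} x^{2 d_i} + \cdots$ has positive leading coefficient and even degree $2 d_i \leq 2k$. The substantive check is the irrationality condition. Writing $P_i(y) = \sum_{j=0}^{d_i} a_{i,j} y^j$ and expanding $(x^2+1)^j$ by the binomial theorem, the coefficient of $x^{2\ell}$ in $h_i$ is $c_{i,\ell} = \sum_{j=\ell}^{d_i} \binom{j}{\ell} a_{i,j}$. For fixed $i$, the linear map sending the nonconstant-coefficient vector $(a_{i,1},\ldots,a_{i,d_i})$ to $(c_{i,1},\ldots,c_{i,d_i})$ is upper-triangular over $\Q$ with $1$'s on the diagonal (the constant term $a_{i,0}$ enters only $c_{i,0}$, so it does not interfere). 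Hence the $\Q$-span of $\{c_{i,\ell} : 1 \leq i \leq s,\ 1 \leq \ell \leq d_i\}$ equals the $\Q$-span of $\{a_{i,j} : 1 \leq i \leq s,\ 1 \leq j \leq d_i\}$, and if the latter has $\Q$-dimension at least two then so does the former.

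With the hypotheses verified, Theorem \ref{theoremfreeman} applied with the degree bound $2k$ gives: for $s \geq s_1(2k)$ and any $\epsilon > 0$, there exists $M_0$ such that every real $M \geq M_0$ is within $\epsilon$ of $\sum_i h_i(z_i)$ for some $z_i \in \Z$. Setting $n_i := z_i^2 + 1 \in \N$ completes the approximation, and the asymptotic $s_1(2k) \sim 4 \cdot 2k \log(2k) \sim 8k\log(k)$ matches the claimed bound on $s_2(k)$. The only step requiring any genuine thought is the transfer of the irrationality condition from the $P_i$'s to the $h_i$'s; everything else is a clean change of variables, so I do not anticipate any further obstacle.
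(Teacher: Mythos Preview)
Your argument is correct and follows exactly the substitution $h_i(x)=P_i(x^2+1)$ that the paper indicates; your verification of positive-definiteness and of the irrationality condition (via the upper-triangular rational change of coefficients) cleanly fills in the details the paper leaves implicit. The bound your argument literally yields is $s_2(k)=s_1(2k)$ rather than the paper's stated $2s_1(k)$, but these coincide asymptotically ($\sim 8k\log k$), which is all that is used downstream.
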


\begin{remark}
\label{remarkgotze}
A result of G\"otze \cite[Corollary 1.4]{Gotze} implies that when $k = 2$, we can get $s_2(2) = 5$ in Corollary \ref{corollaryfreeman}.
\end{remark}

\begin{corollary}
\label{corollaryfreeman2}
Fix $k$ and let $s = s_2(k)$ be as in Corollary \ref{corollaryfreeman}, and fix $a_1,\ldots,a_s\geq 2$, not all powers of the same integer, and $P_1,\ldots,P_s\in\PP_\N^k$ (cf. Theorem \ref{completenesstheorem2}). Then the set
\[
A = \left\{a_1^{P_1(n_1)}\cdots a_s^{P_s(n_s)} : n_1,\ldots,n_s \in \N\right\}.
\]
is sublacunary.
\end{corollary}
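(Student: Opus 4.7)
The plan is to take logarithms and apply Corollary \ref{corollaryfreeman}. Writing the elements of $A$ in increasing order as $x_1 < x_2 < \cdots$, sublacunarity is the statement $x_{k+1}/x_k \to 1$, equivalently $\log(x_{k+1}) - \log(x_k) \to 0$. Let $B = \{\log(x) : x \in A\} = \{\sum_{i=1}^s P_i(n_i)\log(a_i) : n_1,\ldots,n_s \in \N\}$ and enumerate $B$ in increasing order as $b_1 < b_2 < \cdots$; it then suffices to show $b_{k+1} - b_k \to 0$.

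The key step is to apply Corollary \ref{corollaryfreeman} to the polynomials $Q_i(x) := \log(a_i)\cdot P_i(x) \in \PP_\R^k$. Each $Q_i$ has positive leading coefficient, since the leading coefficient of $P_i$ is a positive rational (as $P_i\in\PP_\N^k$ is nonconstant and nonnegative-valued on $\Neur$) and $\log(a_i) > 0$. For the irrationality condition, I invoke the hypothesis that not all of $a_1, \ldots, a_s$ are powers of a single integer to produce indices $i \neq j$ with $\log(a_i)/\log(a_j) \notin \Q$; indeed, if $\log(a_i)/\log(a_j) = p/q$ in lowest terms, then $a_i^q = a_j^p$ forces the common value to be $M^{pq}$ for some integer $M$, making $a_i, a_j$ powers of $M$. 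Hence the leading coefficients of $Q_i$ and $Q_j$ form a $\Q$-linearly independent pair of nonconstant coefficients, and the irrationality condition holds.

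Corollary \ref{corollaryfreeman} now yields, for each $\epsilon > 0$, a threshold $M_0$ such that every real $M \geq M_0$ lies within $\epsilon$ of an element of $B$. Given $b_k \geq M_0$, applying this with $M = b_k + 2\epsilon$ produces $b \in B \cap [b_k + \epsilon, b_k + 3\epsilon]$; since $b > b_k$, we get $b_{k+1} \leq b \leq b_k + 3\epsilon$. As $\epsilon$ was arbitrary, $b_{k+1} - b_k \to 0$, which gives sublacunarity. The only real subtlety is the elementary verification that ``not powers of a single integer'' is equivalent to $\Q$-linear independence of the two logarithms, which was handled above; everything else is a direct invocation of Corollary \ref{corollaryfreeman}.
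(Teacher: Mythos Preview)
Your proof is correct and follows exactly the paper's approach: apply Corollary~\ref{corollaryfreeman} to the polynomials $\log(a_i)P_i$ and verify the irrationality condition via the hypothesis that the $a_i$ are not all powers of a single integer. The paper's own proof is a one-line sketch of this same argument; you have simply filled in the details (and your verification that a rational log-ratio forces two integers to be common powers is the standard one, though strictly speaking one should also note that this pairwise statement propagates to all $s$ integers by writing $a_1=c^d$ with $c$ not a perfect power).
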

\begin{proof}
Apply Corollary \ref{corollaryfreeman} to the sequence of polynomials $\log(a_1)P_1,\ldots\log(a_s)P_s$. Since $a_1,\ldots,a_s$ are not all powers of the same integer and since $P_1,\ldots,P_s$ have integral coefficients, this sequence satisfies the irrationality condition.
\end{proof}

We now begin the proof of Theorem \ref{completenesstheorem2}. Fix $k$, let $s = s_2(k)$, and let
\[
s_0(k) = s_2(k) + 1.
\]
Note that $s_0$ satisfies \eqref{s0bounds}. Fix $a_1,\ldots,a_{s + 1}\geq 2$ and $P_1,\ldots,P_{s + 1}\in\P_\N^k$ such that assumptions (I) and (II) hold. Let
\begin{align*}
B = C_1  &= \left\{\prod_{i = 1}^s a_i^{P_i(n_i)} : n_1,\ldots,n_s \in \N\right\}\\
C_2 &= \{a_{s + 1}^{P_{s + 1}(n)} : n \in \N\}\\
C  &= C_1 C_2 \cup \left\{a_i^{P_i(n)} : n\in\Neur, \;\; i = 1,\ldots,s + 1 \right\}.
\end{align*}
By Corollary \ref{corollaryfreeman2}, $B = C_1$ is sublacunary, so by Remarks \ref{remarksqrt62} and \ref{remarkAI}, \eqref{casselsassumption1prime} and \eqref{casselsassumption2} both hold. Moreover, by assumption (II) we have $B\cap C = \emptyset$.

To demonstrate \eqref{casselsassumption3}, we will use Remark \ref{remarkqinduction3}, so fix $q\geq 2$. Let $p$ be a prime dividing $q$; by assumption (I), we have $p\nmid a_i$ for some $i = 1,\ldots,s + 1$. It follows that $\gcd(a_i^{P_i(n)},q) < q$ for all $n$, demonstrating \eqref{qinduction3}. Thus by Corollary \ref{mainlemmacorollary}, $A = B\cup C$ is complete.

\subsection{Proof of Theorem \ref{theoremzannier}}
For each $n$, find $x_1^{(n)},\ldots,x_k^{(n)} \in A$ satisfying \eqref{zannier} such that $\min(x_1^{(n)},\ldots,x_k^{(n)}) > N_n$, where the sequence $(N_n)_1^\infty$ is chosen recursively so as to satisfy
\begin{equation}
\label{minmaxcond}
\max(x_1^{(n - 1)},\ldots,x_k^{(n - 1)}) < N_n \in A \all n \geq 2.
\end{equation}
Let $F\subset A$ be a finite set such that for all $q = 1,\ldots,b$, $\FS(F)$ intersects every arithmetic progression of the form $q\N + i$ $(0 \leq i < q)$. Let
\begin{align*}
\label{ABdef}
C &= \{x_j^{(n)} : j = 1,\ldots,k , \; n\in\N\}\cup F,&
B &= A\butnot C.
\end{align*}
Since $A$ is sublacunary, the condition \eqref{minmaxcond} implies that $B$ is sublacunary and thus that \eqref{casselsassumption1prime} holds. Fix $\alpha\in\T$ irrational, and let
\[
\epsilon = \min\{\|n\alpha\| : n = 1,\ldots,b\} > 0.
\]
Then for all $n\in\N$, by \eqref{zannier} we have
\[
\left\|\sum_{j = 1}^k z_j x_j^{(n)}\alpha \right\| \geq \epsilon
\]
and thus by the triangle inequality, there exists $j_n = 1,\ldots,k$ such that
\[
\|x_{j_n}^{(n)} \alpha\| \geq \epsilon/(|z_1| + \cdots + |z_k|).
\]
Since $x_{j_n}^{(n)}\in C$, it follows that \eqref{casselsassumption2} holds. Finally, to demonstrate \eqref{casselsassumption3}, we will use Remark \ref{remarkqinduction}, so fix $q\geq 2$. Suppose first that $q > b$. Then for all $n\in\N$, by \eqref{zannier} we have
\[
\sum_{j = 1}^k z_j x_j^{(n)} \notin q\Z
\]
and thus there exists $j_n= 0,\ldots,k$ such that $x_{j_n}^{(n)}\notin q\Z$, i.e. $\gcd(x_{j_n}^{(n)},q) < q$. So by Remark \ref{remarkqinduction3}, \eqref{qinduction} holds. On the other hand, if $2\leq q \leq b$, then the definition of $F$ guarantees that \eqref{qinduction} holds with $r = 1$. Thus by Corollary \ref{mainlemmacorollary}, $A = B\cup C$ is complete.

\subsection{Proof of Theorem \ref{theoremnalpha}}
Let $z_i = (-1)^{k - i} \binom ki$ for all $i = 0,\ldots,k$. Then for all $m\in\N$,
\[
\Delta^k f(m) = \sum_{i = 0}^k z_i f(m + i)
\]
and thus
\[
\left|\sum_{i = 0}^k z_i \lfloor f(m + i)\rfloor \right| \leq b := \sum_{i = 0}^k |z_i| + \sup |\Delta^k f|.
\]
So by Theorem \ref{theoremzannier}, we are done unless for all but finitely many $m\in\N$, we have
\begin{equation}
\label{fmi}
\sum_{i = 0}^k z_i \lfloor f(m + i)\rfloor = 0.
\end{equation}
So by contradiction, suppose that there exists $m_0$ such that \eqref{fmi} holds for all $m\geq m_0$. Let $g$ be the unique polynomial of degree $k - 1$ such that $g(m_0 + i) = \lfloor f(m_0 + i)\rfloor$ for all $i = 0,\ldots,k - 1$. Since $g$ is of degree $k - 1$, for all $m$ we have
\[
\sum_{i = 0}^k z_i g(m + i) = 0,
\]
so a strong induction argument shows that $g(m) = \lfloor f(m)\rfloor$ for all $m\geq m_0$. So
\[
A \supset \{g(n) : n\geq m_0\}
\]
which reduces us to the case considered in \eqref{prezannier}.

\subsection{Proof of Theorem \ref{theoremdPD}}

We begin this proof by introducing a new notation. If $x$ and $y$ are expressions denoting numbers, then $x \lesssim_\times y$ means that $x \leq c y$, where $c > 0$ is a constant independent of $x$ and $y$ (the \emph{implied constant}). The constant $c$ may depend on other variables to be determined from context. We can now state a lemma to be used in the proof:

\begin{lemma}
\label{lemmaMbinom}
Fix $d\in\N$ and $P\in \P_\N^d$. Then for all $n_0,\ldots,n_d\in\N$ distinct, there exist $z_0,\ldots,z_d\in\Z$ such that
\begin{equation}
\label{zibounds}
\max_i |z_i| \lesssim_\times M^{\binom{d + 1}2}
\end{equation}
and
\begin{equation}
\label{ziPi}
0 < \left|\sum_{i = 0}^d z_i P(n_i)\right| \lesssim_\times M^{\binom{d + 1}2},
\end{equation}
where $M = \max_{i,j} |n_j - n_i|$.
\end{lemma}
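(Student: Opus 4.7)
The plan is to prove Lemma \ref{lemmaMbinom} using divided differences on a Vandermonde-type system. Let $k = \deg(P) \in \{1,\ldots,d\}$ and let $a_k > 0$ denote the leading coefficient of $P$ (positivity follows from $P(\Neur)\subset\Neur$ and $\deg P = k$). The key identity is the Newton divided-difference formula: for any $k+1$ distinct reals $n_0,\ldots,n_k$,
\[
P[n_0,\ldots,n_k] \;=\; \sum_{i=0}^{k} \frac{P(n_i)}{\prod_{j\neq i}(n_i-n_j)} \;=\; a_k,
\]
since $\deg P = k$. I will clear denominators by multiplying through by the Vandermonde product $\widetilde W := \prod_{0\leq p<q\leq k}(n_q-n_p)$.

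The first step is to verify that the resulting quantities
\[
z_i \;:=\; \frac{\widetilde W}{\prod_{j\neq i}(n_i-n_j)} \quad (i=0,\ldots,k), \qquad z_{k+1}=\cdots=z_d:=0
\]
are actual integers. This is an elementary factorization: pulling out the factors involving index $i$ from $\widetilde W$ gives
\[
z_i \;=\; (-1)^{k-i}\!\!\prod_{\substack{0\leq p<q\leq k\\ p,q\neq i}}\!(n_q-n_p),
\]
which is manifestly an integer (and nonzero since the $n_j$ are distinct). Multiplying the divided difference formula by $\widetilde W$ then yields the identity
\[
\sum_{i=0}^{d} z_i P(n_i) \;=\; a_k\,\widetilde W,
\]
which is nonzero, establishing the lower bound $0 < \left|\sum z_i P(n_i)\right|$ required in \eqref{ziPi}.

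For the upper bounds, I count factors. The product defining $z_i$ runs over $\binom{k}{2}$ pairs, each of absolute value at most $M$, so $\max_i|z_i| \leq M^{\binom{k}{2}} \leq M^{\binom{d+1}{2}}$ (using $M\geq 1$ since the $n_j$ are distinct integers). Similarly $|\widetilde W| \leq M^{\binom{k+1}{2}} \leq M^{\binom{d+1}{2}}$, giving $|a_k\widetilde W| \lesssim_\times M^{\binom{d+1}{2}}$ with implied constant $|a_k|$ depending only on $P$.

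The only subtle point is that one cannot use all $d+1$ given points when $\deg P < d$: the divided difference $P[n_0,\ldots,n_d]$ would then vanish, making $\sum z_i P(n_i)=0$. The remedy, as above, is to use only the first $k+1$ points and set the remaining $z_i$ to zero; the bound $\binom{d+1}{2}$ is loose enough to absorb the smaller exponents $\binom{k}{2}$ and $\binom{k+1}{2}$ that naturally arise. This is the only place where the inequality (rather than equality) of degree in $\mathcal{P}_\N^d$ has to be handled, and it is the main (mild) obstacle.
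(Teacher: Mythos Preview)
Your proof is correct and takes a somewhat different route from the paper's. The paper shifts to $m_i = n_i - n_0$, expands $P(x+m_i) = \sum_j a_{ij}x^j$, lets $D$ be the determinant of the $(d+1)\times(d+1)$ coefficient matrix $(a_{ij})$, and then uses Cramer's rule to find integers $z_i$ with $\sum_i a_{ij} z_i = D\,\delta_{j0}$, yielding $\sum_i z_i P(n_i) = D$. Your approach via the divided-difference identity $P[n_0,\ldots,n_k] = a_k$ is more direct: it produces the $z_i$ as explicit signed sub-Vandermonde products rather than via Cramer's rule, and it gives $\sum_i z_i P(n_i) = a_k \widetilde W$ in closed form. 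A pleasant side effect is that you handle $\deg P < d$ explicitly by dropping to $k+1$ nodes; the paper's argument tacitly needs $\deg P = d$ for the coefficient matrix to be nonsingular (the $j=d$ column vanishes otherwise), a harmless gap in the lemma as stated since one can always replace $d$ by $\deg P$ and the bound $M^{\binom{d+1}{2}}$ only loosens. Both approaches ultimately rest on the Vandermonde determinant; yours is a cleaner packaging of the same linear algebra.
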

\begin{proof}
For each $i = 0,\ldots,d$ let $m_i = n_i - n_0$, and write
\[
P(x + m_i) = \sum_{j = 0}^d a_{ij} x^j.
\]
Note that $a_{ij}\in\Z$ and
\begin{equation}
\label{aijbounds}
|a_{ij}| \lesssim_\times m_i^{d - j}.
\end{equation}
Let $D$ denote the determinant of the matrix whose $(i,j)$th entry is $a_{ij}$. By the Vandermonde determinant theorem, $D\neq 0$. Also, the bound \eqref{aijbounds} implies that
\[
|D| \lesssim_\times M^{\binom{d + 1}2}.
\]
Let $z_0,\ldots,z_d$ denote the unique solutions to the equations
\[
\sum_{i = 0}^d a_{ij} z_i = \begin{cases}
D & j = 0\\
0 & j > 0
\end{cases}.
\]
By Cramer's rule, we have $z_i\in\Z$, and combining Cramer's rule with \eqref{aijbounds} gives \eqref{zibounds}. To demonstrate \eqref{ziPi}, we observe that
\begin{align*}
\sum_{i = 0}^d z_i P(n_i)
&= \sum_{i = 0}^d z_i P(n_0 + m_i)
= \sum_{i = 0}^d z_i \sum_{j = 0}^d a_{ij} n_0^j
= \sum_{j = 0}^d n_0^j \begin{cases}
D & j = 0\\
0 & j > 0
\end{cases}
= D.
\qedhere\end{align*}
\end{proof}

\begin{lemma}
\label{lemmadPD}
Fix $d\in\N$ and $P\in \P_\N^d$, and let $D$ be a set such that for some $N_0$,
\begin{equation}
\label{casselsassumption2modv2}
c = \inf_{N\geq N_0} \frac1{N^{1 - \delta}} \#(D\cap [1,N]) > 0,
\end{equation}
where $\delta = 1/\left[1 + 2\binom{d + 1}2\right]$. Let $C = P(D) = \{P(n) : n\in D\}$. Then \eqref{casselsassumption2} holds for all irrational $\alpha\in\T$.
\end{lemma}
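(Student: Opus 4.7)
The plan is to argue by contradiction. Assume that for some irrational $\alpha \in \T$ the series $\sum_{n \in C} \|n\alpha\|$ converges; equivalently (since $P \in \PP_\N^d$ takes each value at most $d$ times), $\sum_{n \in D} \|P(n)\alpha\| < \infty$. The strategy is to derive, at each dyadic scale $N$, a uniform positive lower bound on $\sum_{n \in D \cap [N/2, N]} \|P(n)\alpha\|$, so that summation over dyadic scales forces divergence of the full series.

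First, fix a large scale $N$. By the density hypothesis \eqref{casselsassumption2modv2}, $|D \cap [N/2, N]| \geq c' N^{1-\delta}$ for $N$ large. Partitioning $[N/2, N]$ into intervals of length $L = O(N^\delta)$ and applying pigeonhole, one finds on average at least $d+1$ elements of $D$ in each such interval; greedily extracting pairwise disjoint $(d+1)$-tuples produces $T \gtrsim_\times N^{1-\delta}$ tuples $(n_0^{(t)} < \cdots < n_d^{(t)})$ in $D \cap [N/2, N]$, each of diameter $M^{(t)} \leq L$.

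For each such tuple $t$, Lemma \ref{lemmaMbinom} supplies integers $z_i^{(t)}$ and a nonzero integer $S^{(t)} = \sum_i z_i^{(t)} P(n_i^{(t)})$ satisfying $|z_i^{(t)}|, |S^{(t)}| \lesssim_\times (M^{(t)})^e$, where $e = \binom{d+1}{2}$. The triangle inequality gives
\[
\max_i \|P(n_i^{(t)})\alpha\| \;\gtrsim_\times\; \|S^{(t)}\alpha\|\, / \,(M^{(t)})^e.
\]
Using the arithmetic identity $\delta(1+2e) = 1$, we have $(M^{(t)})^e \leq L^e = O(N^{(1-\delta)/2})$, so $|S^{(t)}| = O(N^{(1-\delta)/2})$ as well. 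By Legendre's theorem on continued fractions, $\|S\alpha\| < 1/(2|S|)$ forces $p/S$ to be a continued-fraction convergent of $\alpha$, and only $O(\log N)$ such convergents have denominator up to $O(N^{(1-\delta)/2})$.

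The heart of the argument is to show that a positive proportion of the tuples $t$ satisfy $\|S^{(t)}\alpha\| \geq 1/(2|S^{(t)}|) \gtrsim_\times N^{-(1-\delta)/2}$, so that for those tuples $\max_i \|P(n_i^{(t)})\alpha\| \gtrsim_\times N^{-(1-\delta)}$. Since the tuples are disjoint in $D$, summing these lower bounds yields $\sum_{n \in D \cap [N/2, N]} \|P(n)\alpha\| \gtrsim_\times N^{1-\delta} \cdot N^{-(1-\delta)} = 1$ uniformly in $N$, and summing over dyadic scales produces the desired contradiction. The main obstacle is controlling the number of tuples whose $S^{(t)}$ coincides with one of the $O(\log N)$ convergent denominators of $\alpha$: by the proof of Lemma \ref{lemmaMbinom}, $S^{(t)}$ is a Vandermonde-like determinantal polynomial of degree $e$ in the differences $m_i = n_i - n_0$, and a Schwartz--Zippel-type fiber bound, combined with a careful selection of disjoint tuples that avoids concentration on any single difference pattern, is what must deliver the conclusion that bad tuples form an $o(T)$ fraction.
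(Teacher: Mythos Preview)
Your plan shares the right skeleton with the paper's---find many $(d+1)$-tuples in $D$ of diameter $O(N^\delta)$, apply Lemma~\ref{lemmaMbinom} to produce a nonzero integer $S^{(t)}$ of size $O(N^{(1-\delta)/2})$, then use Diophantine information about $\alpha$ to bound $\|S^{(t)}\alpha\|$ from below---but the Diophantine step contains a genuine error.

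Your invocation of Legendre's theorem is off. From $\|S\alpha\|<1/(2|S|)$ one concludes only that the \emph{reduced} fraction $p/S$ is a convergent of $\alpha$, i.e.\ that some convergent denominator $q_j$ divides $|S|$; it does not force $|S|$ itself to lie among the $O(\log N)$ convergent denominators below $R:=N^{(1-\delta)/2}$. When $\alpha$ has a large partial quotient at this scale---say $q_j\le R$ but $q_{j+1}\gg R^2$---\emph{every} integer $1\le S\le R$ satisfies $\|S\alpha\|<1/(2S)$, so every tuple is ``bad'' and no lower bound is available at that dyadic $N$. A uniform-in-$N$ bound over all large dyadic scales, as you propose, therefore cannot hold. (Your Schwartz--Zippel idea also runs into trouble independently: by the proof of Lemma~\ref{lemmaMbinom}, $S^{(t)}$ is determined by the differences $n_i-n_0$ alone, so the fiber over a single value of $S$ contains all translates of a fixed difference pattern. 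But this is secondary once the count of bad $S$-values is seen to be unbounded.)

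The paper's proof sidesteps the whole issue by reversing the quantifiers. It fixes a convergent denominator $q$ of $\alpha$ \emph{first}, and then chooses $N=(\epsilon q)^{2/(1-\delta)}$ with $\epsilon$ small enough that every $|S^{(t)}|$ produced at scale $N$ is strictly less than $q$. The standard continued-fraction inequality $\|n\alpha\|\ge 1/(2q)$ for $0<n<q$ then applies to \emph{every} tuple, with no exceptions to count. Summing (using the cap $\min(1/q^2,\cdot)$ to handle the overlaps between consecutive tuples) yields a lower bound on the capped partial sum up to $N$ that is bounded away from zero uniformly in $q$; letting $q\to\infty$ along the convergent denominators shows that the tails of $\sum_{n\in C}\|n\alpha\|$ do not tend to zero. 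The moral is that you should let the continued fraction of $\alpha$ dictate the scale $N$, not the other way around.
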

\begin{proof}
Fix $\alpha\in\T$ irrational, and let $p/q\in\Q$ be a convergent of the continued fraction expansion of $\alpha$. By standard results in Diophantine approximation \cite[Theorems 13 and 16]{Khinchin_book}, for all $n < q$ we have $\|n\alpha\| \geq 1/(2q)$. Now let $N = (\epsilon q)^{2/(1 - \delta)}$, where $\epsilon > 0$ is a small constant to be chosen below. Assume that $q$ is large enough so that $N\geq N_0$. Then by \eqref{casselsassumption2modv2}, we have $\#(D\cap [1,N]) \geq cN^{1 - \delta}$. Let $(n_k)_1^\infty$ be the unique increasing indexing of $D$. Then
\[
\#\{k : n_{k + 1} \leq N, \; n_{k + 1} - n_k > (d + 1)c^{-1} N^\delta\}
\leq \frac{c}{d + 1} N^{1 - \delta}.
\]
Let $S$ be the set of $k\in\N$ such that $n_{k + d} \leq N$ and $n_{k + i + 1} - n_{k + i} \leq (d + 1)c^{-1} N^\delta$ for all $i = 0,\ldots,d - 1$. Then
\[
\#(S) \geq (cN^{1 - \delta} - d) - d\frac{c}{d + 1}N^{1 - \delta} = \frac{c}{d + 1} N^{1 - \delta} - d.
\]
Fix $k\in S$, and note that $n_{k + d} - n_k \leq (d^2 + d) c^{-1} N^\delta$. By Lemma \ref{lemmaMbinom}, there exist $z_0,\ldots,z_d\in\Z$ such that
\begin{align*}
\max_i |z_i| &\leq K\\
0 < \left|\sum_{i = 0}^d z_i P(n_{k + i})\right| &\leq K
\end{align*}
where
\[
K \asymp_\times (n_{k + d} - n_k)^{\binom{d + 1}2}
\lesssim_\times [(d^2 + d) c^{-1} N^\delta]^{\binom{d + 1}2}
\asymp_\times N^{\binom{d + 1}2 \delta} = \epsilon q.
\]
By choosing $\epsilon$ sufficiently small, we get $K < q$. In particular, since
\[
\left|\sum_{i = 0}^d z_i P(n_{k + i})\right| < q
\]
we have
\[
\left\|\sum_{i = 0}^d z_i P(n_{k + i}) \alpha\right\| \geq \frac1{2q}
\]
and thus
\[
\sum_{i = 0}^d \left\|P(n_{k + i}) \alpha\right\| \geq \frac{1}{2q^2}.
\]
So
\begin{align*}
\sum_{\substack{k \\ n_k \leq N}} \min(1/q^2,\|P(n_k) \alpha\|)
&\geq \frac1{d + 1} \sum_{k\in S} \min\left(1/q^2,\sum_{i = 0}^d \|P(n_{k + i}) \alpha\|\right)\\
\geq \frac1{2(d + 1)q^2} \#(S)
&\geq \frac1{2(d + 1)q^2}\left(\frac{c}{d + 1} N^{1 - \delta} - d\right) = \frac{c}{2(d + 1)^2} - \frac{d}{2(d + 1)q^2}\cdot
\end{align*}
As $q\to\infty$, this inequality implies that the tails of the series $\sum_k \|P(n_k) \alpha\|$ do not converge to zero. It follows that the series \eqref{casselsassumption2} diverges.
\end{proof}

We now begin the proof of Theorem \ref{theoremdPD}. Let $I$ be an infinite subset of $A$ such that for all $q\geq 2$,
\begin{equation}
\label{casselsassumption3modI}
\#\{n\in I : q \not\divides n\} = \infty.
\end{equation}
It is possible to choose $I$ sparse enough so that $A\butnot I$ is a sublacunary set. For each $k\in\N$, let $n_k = P(m_k)\in A\butnot I$ be chosen so that $k^3 \leq m_k < (k + 1)^3$ if possible, with $m_k = \min(D)$ otherwise. Then $D_2 = \{m_k : k\in\N\}$ is a sublacunary set, and so is $B = P(D_2) = \{n_k : k\in\N\}$. On the other hand,
\[
\lim_{N\to\infty} \frac1{N^{1 - \delta}} \#(D_2\cap [1,N]) \leq \lim_{N\to\infty} \frac1{N^{1 - \delta}} (1 + \lceil N^{1/3} \rceil) = 0
\]
and therefore \eqref{casselsassumption2modv2} holds for $D_3 = D\butnot D_2$. In particular, \eqref{casselsassumption2} holds for $C = P(D_3) = A\butnot B$. On the other hand, for all $q\geq 2$, by \eqref{casselsassumption3modI} we have \eqref{qinduction2} and thus by Remarks \ref{remarkqinduction} and \ref{remarkqinduction2}, we have \eqref{casselsassumption3}.

So by Theorem \ref{maintheorem}, $A$ is complete. But if $F$ is any finite subset of $A$, then $A\butnot F$ also satisfies the hypotheses of this corollary, and is therefore complete. Thus $A$ is strongly complete.

\subsection{Proof of Theorem \ref{theoremBEGL}}
Let $F$ be a finite subset of $A$. For each $i = 1,2,3$ let $B_i = S_i^\Neur\butnot F$, and let $C = S_4^\Neur\butnot F$. Fix $i = 1,2,3$ and $n\in B_i$. For each $a\in S_i$ let $m_a\in\Neur$ be the largest integer such that $a^{m_a} < n$, and let $k_a$ be the smallest integer such that $a^m\notin F$ for all $m\geq k_a$. Then by \eqref{a1sum},
\[
n \leq \sum_{a\in S_i} \frac{n}{a - 1} \leq \sum_{a\in S_i} \frac{a^{m_a + 1}}{a - 1} \leq \sum_{a\in S_i} \left[\sum_{m = k_a}^{m_a} a^m + a^{k_a}\right] \leq \sum\{b\in B_i : b < n\} + \sum_{a\in S_i} a^{k_a},
\]
i.e. \eqref{casselsassumption1} holds. Since $C \supset a^{k_a + \Neur} a^\Neur$ for every $a\in S_4$, Remark \ref{remarkAI} implies that \eqref{casselsassumption2} holds. Finally, \eqref{casselsassumption3} follows immediately from Remark \ref{remarkqinduction3} and the assumption that $\gcd(S_4) = 1$. Thus by Theorem \ref{maintheorem}, $A\butnot F$ is complete; since $F$ was arbitrary, $A$ is strongly complete.

\subsection{Proof of Proposition \ref{completenesstheorem0fin}}
We verify \eqref{syndetic2} for $A = a^S b^T$. Let $\synd\in\N$ denote the syndeticity constant of $S$, and let $L = a^\synd - 1$. Fix $N\in\N$ and $m\in T$, and let $n\in S$ be the smallest element such that $a^n b^m > N$. Assuming $N\geq a^{\min(S)} b^m$, this implies that $a^n b^m \in \OC N{(L + 1)N}$. So
\[
\#\big(A\cap \OC N{(L + 1)N}\big) \geq \#\{m\in T : N > a^{\min(S)} b^m\} \tendsto N \#(T) \geq L,
\]
demonstrating \eqref{syndetic2}. So by Remark \ref{remarksyndetic2} and Lemma \ref{lemmasyndetic}, $A$ is complete.

\section{Proofs of dispersing results}
\label{sectiondispersing}

We now state the main theorem which we will use to prove some of our dispersing results, namely Theorems \ref{dispersingtheorem2}, \ref{dispersingtheorem1}, and \ref{dispersingtheorem3}. Theorems \ref{dispersingtheorem0} and \ref{dispersingtheorem4} will be proven separately.

\begin{theorem}
\label{theoremproductsublacunary}
Fix $r\in\N$, and let $(B_i)_1^{2r}$ be a sequence of infinite subsets of $r\N + 1$, of which at least $B_1,\ldots,B_r$ are sublacunary. Then the set
\[
A = \prod_{i = 1}^{2r} B_i
\]
is $1/r$-dispersing.
\end{theorem}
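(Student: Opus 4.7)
My plan is to proceed in three stages.

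\textbf{Stage 1 (Reduction modulo the residue structure).} Since each $B_i\subset r\N+1$, every $a\in A$ satisfies $a\equiv 1\pmod r$. Setting $\beta:=r\alpha$, which is irrational whenever $\alpha$ is, we have $a\alpha=\alpha+\frac{a-1}{r}\beta$, so $A\alpha=\alpha+\beta A'$ where $A':=(A-1)/r\subset\Neur$. Since translation on $\T$ preserves $\epsilon$-density, it suffices to show that $\beta A'$ is $1/r$-dense. Writing $b_i=1+rc_i$ with $c_i\in C_i:=(B_i-1)/r$ and expanding $\prod_i(1+rc_i)-1$, the set $A'$ is the image of $\prod_i C_i$ under the multilinear polynomial
\[
f(\vec c)\;=\;\sum_{i=1}^{2r} c_i+r\sum_{i<j}c_ic_j+\cdots+r^{2r-1}\prod_i c_i,
\]
with $C_1,\ldots,C_r$ sublacunary and $C_{r+1},\ldots,C_{2r}$ merely infinite.

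\textbf{Stage 2 (Coarse/fine construction).} To produce, for any prescribed target $\gamma\in\T$, an element of $\beta A'$ within $1/r$ of $\gamma$, I would split the $2r$ coordinates into a ``coarse'' group (the infinite factors $B_{r+1},\ldots,B_{2r}$) and a ``fine'' group (the sublacunary factors $B_1,\ldots,B_r$). Fixing $b_{r+1},\ldots,b_{2r}$ and setting $P:=b_{r+1}\cdots b_{2r}\equiv 1\pmod r$, we have $A\supset(B_1\cdots B_r)\cdot P$, and $B_1\cdots B_r$ is itself sublacunary and a subset of $r\N+1$ (a short check using the definition of sublacunarity). Multilinearity of $f$ gives $\partial f/\partial c_i=\prod_{j\neq i}b_j$, so replacing $b_i$ by a nearby element $b_i'\in B_i$ (possible because $B_i$ is sublacunary) shifts $f(\vec c)\beta$ on $\T$ by $((b_i'-b_i)/r)\prod_{j\neq i}b_j\cdot\beta$, an integer multiple of $\beta$ whose magnitude we can control. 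I would use these $r$ ``perturbation directions,'' one per sublacunary factor, combined with the freedom to vary $P$, to exhibit an element of $A\alpha$ in any prescribed $2/r$-arc.

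\textbf{Main obstacle.} The crux is that sublacunarity alone does not imply that the orbit $(B_1\cdots B_r)\gamma$ is dense -- or even somewhere dense -- for a generic irrational $\gamma$, so one cannot fix a single $P$ and conclude from a density black-box. The real work lies in combining the coarse freedom (from the infinite factors) with the fine sublacunary shifts so that they cooperate for \emph{every} irrational $\alpha$, and in extracting the sharp threshold $1/r$, matching the number of sublacunary factors. I anticipate that the actual proof either runs a greedy algorithm choosing the $b_i$'s one at a time so as to drive $f(\vec c)\beta$ toward $\gamma$, or analyzes the closure $\cl{A\alpha}\subset\T$ directly via a compactness argument on the product $\prod_i B_i$, using the polynomial structure of $f$ to rule out an omitted arc of length exceeding $2/r$.
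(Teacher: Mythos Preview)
Your proposal has a genuine gap, which you yourself flag: there is no mechanism that actually forces $1/r$-density for every irrational $\alpha$. The polynomial expansion in Stage~1 is a valid identity but leads nowhere, and the ``perturbation directions'' $\partial f/\partial c_i$ of Stage~2 do not translate into a usable argument on $\T$. More importantly, grouping all the sublacunary factors together and all the merely-infinite factors together is the wrong organization.

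The paper's proof rests on two ingredients you are missing. The first is a short lemma: if $B\subset\N$ is sublacunary and $0$ is a limit point of a set $S\subset\T$, then $BS$ is dense in $\T$. The second is to \emph{pair} each sublacunary $B_i$ with an infinite $C_i:=B_{r+i}$ and run an induction on $k$, showing that the closure $F_k:=\cl{B_1C_1\cdots B_kC_k\,\alpha}$ contains an arithmetic progression $\alpha_k,\alpha_k+1/r,\ldots,\alpha_k+k/r$. For the inductive step (assuming no rational lies in $\cl{B_1C_1\cdots B_{r-1}C_{r-1}\alpha}$; the contrary case is handled separately and yields full density of $A\alpha$ using the remaining pair $B_r,C_r$ and the same lemma): since $\alpha_k$ is irrational, $(C_{k+1}-C_{k+1})\alpha_k$ accumulates at $0$, so by the lemma $B_{k+1}(C_{k+1}-C_{k+1})\alpha_k$ is dense and in particular $1/r$ lies in its closure; hence $\cl{B_{k+1}C_{k+1}\alpha_k}$ contains two points $\alpha_{k+1}$ and $\alpha_{k+1}+1/r$. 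The residue hypothesis $B_{k+1},C_{k+1}\subset r\N+1$ enters exactly here: multiplying $\alpha_k+i/r$ by any $n\equiv1\pmod r$ gives $n\alpha_k+i/r$, so the step $1/r$ is preserved when passing from $F_k$ to $F_{k+1}$, and the progression gains one term. At $k=r-1$ one has $r$ points spaced $1/r$ apart, which is $1/r$-density. So the missing idea is not a greedy choice of coordinates in your polynomial $f$, but a closure-based induction that expends each sublacunary/infinite pair to add one more point to a $1/r$-spaced progression inside $\cl{A\alpha}$.
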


The following lemma will be used in the proof of Theorem \ref{theoremproductsublacunary}.

\begin{lemma}
Let $A$ be a sublacunary set. If $0$ is in the closure of a set $S\subset(0,\infty)$, then $AS$ is dense in $\Rplus$.
\end{lemma}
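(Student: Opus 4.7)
The plan is to show that given any target $x > 0$ and tolerance $\epsilon > 0$, we can locate $n \in A$ and $s \in S$ with $|ns - x| < \epsilon$ by picking $s$ very small (using $0 \in \overline{S}$) and then approximating $x/s$ by an element of $A$ (using sublacunarity). The two mechanisms combine cleanly because making $s$ small pushes the approximation problem into the regime of large elements of $A$, precisely where sublacunarity becomes strong.

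More concretely, write $A = \{n_1 < n_2 < \cdots\}$. First I would fix $x, \epsilon > 0$ and set $\delta = \epsilon/(2x)$. By the sublacunarity hypothesis $n_{k+1}/n_k \to 1$, there exists $K$ such that $n_{k+1} - n_k \leq \delta\, n_k$ for all $k \geq K$. Then, since $0 \in \overline{S}$ and $S \subset (0,\infty)$, I can choose $s \in S$ with $s > 0$ small enough that $x/s > n_K$.

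With these choices, let $k$ be the largest index for which $n_k \leq x/s$; then $k \geq K$, and
\[
0 \leq \tfrac{x}{s} - n_k < n_{k+1} - n_k \leq \delta\, n_k \leq \delta\,\tfrac{x}{s}.
\]
Multiplying through by $s$ gives $|n_k s - x| \leq \delta x = \epsilon/2 < \epsilon$, so $n_k s \in AS$ lies within $\epsilon$ of $x$. Since $x$ and $\epsilon$ were arbitrary and $n_k s > 0$, this shows $AS$ is dense in $(0,\infty) = \mathbb{R}_+$.

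There is essentially no obstacle here; the only thing to be careful about is that sublacunarity is an asymptotic statement, so the smallness of $s$ must be chosen \emph{after} fixing the threshold $K$ coming from $\delta$. The structure ``choose $\delta$ from $\epsilon$, then $K$ from $\delta$, then $s$ small enough that $x/s$ exceeds $n_K$'' handles this quantifier order correctly.
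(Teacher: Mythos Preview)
Your proof is correct and follows essentially the same approach as the paper: fix $x$ and $\epsilon$, use sublacunarity to get a threshold index, pick $s\in S$ small enough to push $x/s$ past that threshold, and take the largest $n_k \leq x/s$. The only cosmetic difference is that the paper phrases the final estimate multiplicatively (showing $1 \leq x/(n_k s) \leq 1+\epsilon$) rather than additively, but the argument is the same.
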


\begin{proof}
Fix $x > 0$ and $\epsilon > 0$. Let $(n_k)_1^\infty$ be the unique increasing indexing of $A$, and let $k_0$ be large enough so that $|n_{k + 1}/n_k - 1| \leq \epsilon$ for all $k\geq k_0$. Since $0\in\cl S$, there exists $y\in S$ with $0 < y\leq x/n_{k_0}$. Let $k$ be maximal subject to $n_k\leq x/y$. Then
\[
1\leq \frac{x}{n_k y} \leq \frac{n_{k + 1}}{n_k} \leq 1 + \epsilon.
\]
Since $\epsilon$ was arbitrary, we are done.
\end{proof}

Since $\pi(\CO 0\infty) = \T$ (where $\pi:\R\to\T$ is the natural projection), we get:

\begin{corollary}
\label{corollaryAS}
Let $A$ be a sublacunary set. If $0$ is a limit point of a set $S\subset\T$, then $AS$ is dense in $\T$.
\end{corollary}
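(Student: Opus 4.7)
The plan is to lift the set $S\subset\T$ to a subset $\tilde S\subset(0,\infty)$ that has $0$ as a limit point, and then apply the preceding lemma together with the (easy) observation that the projection $\pi:(0,\infty)\to\T$ sends dense subsets to dense subsets, since it is continuous and surjective.

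First, using that $0$ is a limit point of $S$ in $\T$, I would choose a sequence $(s_n)$ in $S\setminus\{0\}$ with $\|s_n\|\to 0$. Each $s_n$ has a unique representative $\tilde s_n\in(-1/2,1/2)\setminus\{0\}$ satisfying $|\tilde s_n|=\|s_n\|$. By passing to a subsequence, I can arrange that all $\tilde s_n$ share the same sign.

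If that sign is positive, set $\tilde S=\{\tilde s_n:n\in\N\}\subset(0,\infty)$; since $0\in\overline{\tilde S}$, the lemma gives that $A\tilde S$ is dense in $(0,\infty)$, so $AS\supset\pi(A\tilde S)$ is dense in $\T$. If the sign is negative, set $\tilde S=\{-\tilde s_n:n\in\N\}\subset(0,\infty)$, which again has $0$ as a limit point. The lemma then gives $A\tilde S$ dense in $(0,\infty)$, so $\pi(A\tilde S)=A\{-s_n\}=-A\{s_n\}$ is dense in $\T$. Since the map $x\mapsto -x$ is a homeomorphism of $\T$, the set $A\{s_n\}\subset AS$ is also dense.

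The only place where any subtlety arises is this sign dichotomy in the lifting step, and it is resolved trivially by the symmetry $x\mapsto -x$ of $\T$. I do not anticipate any genuine obstacle; the proof is essentially a two-line deduction from the lemma once the lifting is set up correctly.
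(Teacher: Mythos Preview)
Your proposal is correct and follows essentially the same route as the paper: lift $S$ into $(0,\infty)$, apply the preceding lemma, and project back via the continuous surjection $\pi:[0,\infty)\to\T$. The paper compresses this into the single remark ``Since $\pi([0,\infty))=\T$'', suppressing the sign dichotomy that you spell out; your explicit handling of the negative-representative case via the homeomorphism $x\mapsto -x$ is a reasonable way to fill in that detail.
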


\begin{proof}[Proof of Theorem \ref{theoremproductsublacunary}]
For each $i = 1,\ldots,r$ let $C_i = B_{r + i}$, and let $A' = \prod_{i = 1}^{r - 1} B_i C_i$. Fix $\alpha\in\T$ irrational.

{\bf Case 1: $\cl{A'\alpha}\cap\Q = \emptyset$.} In this case, letting $k = r - 1$ in the following claim shows that $A' \alpha$ is $1/r$-dense:
\begin{claim}
For all $k = 0,\ldots,r - 1$, there exists $\alpha_k\in\T$ such that
\begin{equation}
\label{alphan}
\alpha_k,\alpha_k + r^{-1}, \ldots, \alpha_k + k r^{-1} \in F_k := \cl{B_1 C_1\cdots B_k C_k \alpha}.
\end{equation}
\end{claim}
\begin{subproof}
For $k = 0$, simply let $\alpha_0 = \alpha$. Fix $k$, and suppose that there exists $\alpha_k$ such that \eqref{alphan} holds. Since $\cl{A'\alpha}$ does not contain any rational, $\alpha_k$ is irrational, so $C_{k + 1}\alpha_k$ is infinite. Since $\T$ is compact, it follows that $0$ is a limit point of $(C_{k + 1} - C_{k + 1})\alpha$. So by Corollary \ref{corollaryAS}, $B_{k + 1}(C_{k + 1} - C_{k + 1})\alpha_k$ is dense in $\T$, and in particular $r^{-1}\in \cl{B_{k + 1}(C_{k + 1} - C_{k + 1})\alpha_k}$. It follows that there exists $\alpha_{k + 1}\in \cl{B_{k + 1} C_{k + 1} \alpha_k}$ such that $\alpha_{k + 1} + r^{-1} \in \cl{B_{k + 1} C_{k + 1} \alpha_k}$. Since $B_{k + 1},C_{k + 1} \subset r\N + 1$, \eqref{alphan} gives
\[
\alpha_{k + 1} + ir^{-1},\alpha_{k + 1} + (i + 1)r^{-1} \in \cl{B_{k + 1} C_{k + 1} \alpha_k} + i r^{-1} \subset \cl{B_{k + 1} C_{k + 1} F_k} = F_{k + 1} \all i = 0,\ldots,k,
\]
which demonstrates \eqref{alphan} for $k = k + 1$.
\end{subproof}

{\bf Case 2: $\cl{A'\alpha}\cap\Q \neq \emptyset$.}
Fix $p/q\in\cl{A'\alpha}$. Then $0\in \cl{q A'\alpha}$. By Corollary \ref{corollaryAS}, $q A' B_r \alpha$ is dense in $\T$. So by elementary topology, $A' B_r \alpha$ is somewhere dense. Multiplying by the infinite set $C_r$ and using the identity $A = A' B_r C_r$ shows that $A\alpha$ is dense, finishing the proof.
\end{proof}

We now use Theorem \ref{theoremproductsublacunary} to prove Theorems \ref{dispersingtheorem2}, \ref{dispersingtheorem1}, and \ref{dispersingtheorem3}.

\subsection{Proof of Theorem \ref{dispersingtheorem2}}
Write $R$ as a disjoint union $R = R'\cup I$, where $R'$ has the same property as $R$ and $I$ is infinite.

Let $r$ be a large prime, and let $k = r - 1$. Write $R'\cap k\N$ as a disjoint union $R'\cap k\N = \bigcup_1^\infty R_i$, where for each $i$, $(R_i/k)\log_b(a)$ is dense mod 1. Then for each $i$, the set
\[
B_i = a^{k\N} b^{R_i} \subset r\N + 1
\]
is sublacunary. So by Theorem \ref{theoremproductsublacunary}, $a^\N b^{\FS(R')} \supset \prod_{i = 1}^{2r} B_i$ is $1/r$-dispersing. Since $r$ was arbitrary, $a^\N b^{\FS(R')}$ is dispersing. Then by elementary topology, $a^S b^{\FS(R')}$ is weakly dispersing. Multiplying by the infinite set $b^I$ finishes the proof.

\subsection{Proof of Theorem \ref{dispersingtheorem1}}
Without loss of generality we can assume that for all $i$, $p$ does not divide $a_i$. Let $s = s_2(k)$ be as in Corollary \ref{corollaryfreeman2}.

Fix $\epsilon > 0$, and let $\ell\in\N$ be large enough so that $r := p^\ell > 1/\epsilon$. For each $j$ let
\[
C_j = \{a_j^{P_j(p^\ell (p - 1) n)} : n\in\N\}
\]
and then let
\[
B_i = \prod_{j = 1}^s C_{si + j}.
\]
By Corollary \ref{corollaryfreeman2}, the sets $(B_i)_1^\infty$ are sublacunary, and from number-theoretical considerations they satisfy $B_i \subset r\N + 1$. So by Theorem \ref{theoremproductsublacunary}, the product $\prod_1^{2r} B_i$ is $1/r$-dispersing. Since $\prod_1^{2r} B_i \subset A_{2rs}$, this completes the proof.

\subsection{Proof of Theorem \ref{dispersingtheorem3}}
Fix $r$ such that $A\cap (r\N + 1)$ is sublacunary, and let $B_1,\ldots,B_{2r}$ be pairwise disjoint sublacunary subsets of $A\cap (r\N + 1)$. Then by Theorem \ref{theoremproductsublacunary}, $\prod_{i = 1}^{2r} B_i$ is $1/r$-dispersing, and thus so is $\FP(A) \supset \prod_{i = 1}^{2r} B_i$. Since $r$ was arbitrary, $\FP(A)$ is dispersing.

The remaining proofs do not use Theorem \ref{theoremproductsublacunary}.

\subsection{Proof of Theorem \ref{dispersingtheorem0}}
Fix $\alpha\in\T$ irrational. Since $S$ is syndetic and $T$ is piecewise syndetic, there exist constants $s,t\in\N$ such that $S' = S + \{0,\ldots,s\}$ is cofinite and $T' = T + \{0,\ldots,t\}$ contains arbitrarily large intervals, say
\[
T' \supset \bigcup_{k = 1}^\infty \{n_k,\cdots,n_k + k\}
\]
for some sequence $n_k\to\infty$. By passing to a subsequence, we may assume that $b^{n_k}\alpha \to \beta$ for some $\beta\in \T$.

Let $A = a^S b^T$ and $A' = a^{S'} b^{T'}$. If we can show that $A' \alpha$ is somewhere dense, then we can complete the proof using elementary topology. Namely,  there exists a finite set $F$ such that $A' = FA$, and thus $\bigcup_{f\in F} f A \alpha$ is somewhere dense. So for some $f\in F$, $f A \alpha$ is somewhere dense and thus $A\alpha$ is somewhere dense.

{\bf Case 1: $\beta$ irrational.} In this case, by Theorem \ref{theoremfurstenberg}, $a^{S'} b^\N \beta$ is dense in $\T$. Fix $n\in S'$ and $m\in\N$. Then for all $k\geq m$,
\[
A' \alpha \ni a^n b^{n_k + m} \alpha \to a^n b^m \beta.
\]
So $\cl{A' \alpha} \supset \cl{a^{S'} b^\N \beta} = \T$.

{\bf Case 2: $\beta$ rational.} After multiplying by the denominator of $\beta$, we may without loss of generality assume that $\beta = 0$, i.e. $b^{n_k} \alpha \to 0$. Fix $\epsilon > 0$, and let $k$ be large enough so that $F = \{0,\ldots,k\} \log_a(b)$ is $\epsilon$-dense mod 1. Then $F + S'$ is $\epsilon$-dense in $\CO c\infty$  for some $c \geq 0$. Choose $\ell\geq k$ large enough so that
\[
\|b^{n_\ell} \alpha\| \leq 1/a^c.
\]
Then $F + S' + \log_a\|b^{n_\ell}\alpha\|$ is $\epsilon$-dense in $\CO 0\infty$. Since the exponential function $x\mapsto a^x$ is $2\log(a)$-Lipschitz on $\OC{-\infty}{\log_a(2)}$, $a^{S'} b^{\{0,\ldots,k\}} \|b^{n_\ell}\alpha\|$ is $2\log(a)\epsilon$-dense in $[1,2]$. But this implies that $A'\alpha \supset a^{S'} b^{\{n_\ell,\ldots,n_\ell + k\}} \alpha$ is $2\log(b)\epsilon$-dense in $\T$. Since $\epsilon$ was arbitrary, this completes the proof.

\subsection{Proof of Theorem \ref{dispersingtheorem4}}

Let $\alpha\in\T$ be Lebesgue random. Then for all $i$, $a_i^\N \alpha$ is dense in $\T$, and in particular $0$ is a limit point of $a_i^\N \alpha$. This will be the only fact about $\alpha$ we need for this proof.

Let $\pi_1,\pi_2:\N\to\N$ be maps such that $\pi_1\times\pi_2:\N\to\N\times\N$ is a bijection. We will define by recursion a sequence $(N_k)_1^\infty$, and then we will show that if
\[
S_i = \bigcup_{k:\pi_1(k) = i} (N_k + \{0,\ldots,\pi_2(k)\}),
\]
then $(A\alpha)'$ (the derived set of $A\alpha$) is $\{0\}$.

Fix $k\in\N$, and suppose that $N_j$ has been defined for all $j < k$. Let
\[
M_k = a_{\pi_1(k)}^{\pi_2(k)} \prod_{j < k} a_{\pi_1(j)}^{N_j + \pi_2(j)}.
\]
By our assumption on $\alpha$, there exists $N_k$ such that $\|a_{\pi_1(k)}^{N_k}\alpha\| \leq (k M_k)^{-1}$. This completes the recursive step.

Now,
\[
A \subset \prod_{k = 1}^\infty \{1\}\cup a_{\pi_1(k)}^{N_k + \{0,\ldots,\pi_2(k)\}}.
\]
So to show that $(A\alpha)' = 0$, it suffices to show that if
\[
n = \prod_{k\in F} a_{\pi_1(k)}^{N_k + s_k} \;\; (\text{$F$ finite, $0\leq s_k\leq \pi_2(k) \all k\in F$}),
\]
then
\[
\|n\alpha\| \leq 1/\max(F).
\]
Indeed, let $k = \max(F)$, and let
\[
m = a_{\pi_1(k)}^{s_k} \prod_{j\in F\butnot\{k\}} a_{\pi_1(j)}^{N_j + s_j}.
\]
Then $m\leq M_k$ and $n = m a_{\pi_1(k)}^{N_k}$. So
\[
\|n\alpha\| \leq m\|a_{\pi_1(k)}^{N_k} \alpha\| \leq 1/k.
\]




\appendix
\section{Growth rate calculations}

\subsection{}
\label{proofanbm}
Fix $a,b\geq 2$ and let $A = \{a^{n^2} b^{m^2} : n,m\in\Neur\}$. In Example \ref{exampleanbm}, we stated that $\FS(A)$ has density zero. Indeed, if $a = b = 2$ then this follows from the fact that infinitely many integers cannot be written as the sum of two squares, so assume that $\max(a,b) \geq 3$. Then for any $N$,
\begin{align*}
\#(A\cap [1,N]) &\leq \#\{(n,m)\in\Neur^2 : n^2 \leq \log_a(N) , m^2 \leq \log_b(N)\}\\
&\leq \sqrt{\log_a(N)}\sqrt{\log_b(N)}\\
\#(\FS(A)\cap [1,N]) &\leq 2^{\#(A\cap [1,N])} \leq \exp\big(\log(2) \sqrt{\log_a(N)}\sqrt{\log_b(N)}\big)
= N^{\sqrt{\log_a(2)\log_b(2)}}.
\end{align*}
Since $a,b\geq 2$ and $\max(a,b)\geq 3$, the exponent is strictly less than one and thus $\FS(A)$ has density zero. In particular $\FS(A)$ is not cofinite, so $A$ is not complete.

We remark that a similar analysis says nothing about the density of the similar-looking set
\[
\FS\left(\left\{2^{\binom n2} 3^{\binom m2} : n,m\in\Neur \right\}\right),
\]
indicating that the issue is somewhat subtle.

\subsection{}
\label{proofs0bounds}
Fix $a_1,\ldots,a_s\geq 2$ pairwise coprime and $P_1,\ldots,P_s\in \PP_\N$ and let
\begin{equation}
\label{completeness2v2}
A = \left\{a_1^{P_1(n_1)}\cdots a_s^{P_s(n_s)} : n_1,\ldots,n_s\in \Neur\right\}.
\end{equation}
Theorem \ref{completenesstheorem2} stated that for all $k$, there exists $s_0 = s_0(k)$ such that if $s\geq s_0$ and $\deg(P_i)\leq k \all i$, then $A$ is complete. In Remark \ref{remarks0bounds}, we stated that $s_0(k) \geq k$, meaning that if $s = k - 1$ and $\deg(P_i) = k\all i$, then $A$ is not complete. In fact, we will prove the following more general result:

\begin{theorem}
\label{completenesstheorem3}
Fix $s\in\N$, $a_1,\ldots,a_s\geq 2$, and $P_1,\ldots,P_s\in\PP_\N$. If
\begin{equation}
\label{completeness3}
\sum_{i = 1}^s \frac{1}{\deg(P_i)} < 1,
\end{equation}
then the set $A$ defined by \eqref{completeness2v2} is not complete.
\end{theorem}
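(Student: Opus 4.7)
The strategy is purely a density/growth-rate argument, in the spirit of Example \ref{exampleanbm}: we will show that $\FS(A)$ is so sparse in $\N$ that it cannot possibly be cofinite. Write $d_i = \deg(P_i)$ and $\sigma = \sum_{i=1}^s 1/d_i < 1$.

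First I would bound $\#(A\cap [1,N])$. Since $P_i\in\PP_\N$ is nonconstant with $P_i(\Neur)\subset\Neur$, its leading coefficient is positive, so there is a constant $c_i > 0$ with $P_i(n_i)\geq c_i n_i^{d_i}$ for all $n_i$ large enough. Consequently, if $a_1^{P_1(n_1)}\cdots a_s^{P_s(n_s)} \leq N$, then $\log(a_i) c_i n_i^{d_i} \leq \log(N) + O(1)$ for each $i$, which gives $n_i \lesssim_\times (\log N)^{1/d_i}$. Taking the product,
\[
\#(A\cap [1,N]) \lesssim_\times \prod_{i=1}^s (\log N)^{1/d_i} = (\log N)^{\sigma}.
\]

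Next, every element of $\FS(A)\cap [1,N]$ is the sum over some finite nonempty $F\subset A$, and necessarily $F\subset A\cap [1,N]$ since each summand is a positive integer bounded by the total. Therefore
\[
\#(\FS(A)\cap [1,N]) \leq 2^{\#(A\cap [1,N])} \leq 2^{C(\log N)^\sigma}
\]
for a suitable constant $C > 0$ and all $N$ large. Since $\sigma < 1$, the exponent $C(\log N)^\sigma$ is $o(\log N)$, hence
\[
\#(\FS(A)\cap [1,N]) = N^{o(1)}
\]
as $N\to\infty$. In particular $\FS(A)$ has density zero, so it cannot be cofinite, and $A$ is not complete.

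\textbf{Main obstacle.} There is no real conceptual obstacle; the proof is essentially the calculation already indicated in \S\ref{proofanbm}, just carried out with nonequal degrees. The only mildly delicate point is to justify that all $n_i$ contributing to an element of $A\cap[1,N]$ are bounded in terms of $\log N$ via the pointwise inequality $P_i(n_i)\geq c_i n_i^{d_i}$; this uses only that $P_i$ has positive leading coefficient, which is automatic from $P_i\in\PP_\N$ and $P_i$ nonconstant. Recovering Remark \ref{remarks0bounds}'s lower bound $s_0(k)\geq k$ is immediate: if $s = k-1$ and every $\deg(P_i) = k$, then $\sigma = (k-1)/k < 1$, so Theorem \ref{completenesstheorem3} applies and $A$ fails to be complete.
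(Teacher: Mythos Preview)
Your proposal is correct and follows essentially the same approach as the paper: bound $\#(A\cap[1,N])$ by a constant times $(\log N)^\sigma$ via the lower bound $P_i(n)\gtrsim n^{d_i}$, then use $\#(\FS(A)\cap[1,N])\leq 2^{\#(A\cap[1,N])}$ and $\sigma<1$ to conclude density zero. The paper's only cosmetic difference is that it introduces an intermediate exponent $\alpha\in(\sigma,1)$ to absorb the implied constants into a clean bound $\#(A\cap[1,N])\leq \log_2(N)^\alpha$ for large $N$.
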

\begin{proof}
Let $C > 0$ be a constant large enough so that for all $i = 1,\ldots,s$ and $s\geq 0$, $P_i(x) \geq (1/C)x^{\deg(P_i)} - C$. Fix
\[
\sum_{i = 1}^s \frac{1}{\deg(P_i)} < \alpha < 1.
\]
Then for all $N\in\N$,
\begin{align*}
\#&\left(\left\{\prod_{i = 1}^s a_i^{P_i(n_i)} : n_1,\ldots,n_s\in\Neur\right\}\cap [1,N]\right) \noreason\\
&\leq \prod_{i = 1}^s \#\{n\in\Neur : a_i^{P_i(n)} \leq N\} \noreason\\
&\leq \prod_{i = 1}^s \#\{n\in\Neur : n^{\deg(P_i)} \leq C\log_{a_i}(N) + C^2\} \noreason\\
&\leq \prod_{i = 1}^s (C\log_{a_i}(N) + C^2 + 1)^{1/\deg(P_i)} \noreason\\
&\leq \log_2(N)^\alpha. \note{if $N$ is sufficiently large}
\end{align*}
Elementary combinatorics then gives
\[
\frac 1N\#\left(\FS\left(\left\{\prod_{i = 1}^s a_i^{P_i(n_i)} : n_1,\ldots,n_s\in\Neur\right\}\right) \cap [1,N]\right) \leq \frac 1N 2^{\log_2(N)^\alpha} \tendsto N 0,
\]
i.e. $\FS(\{\prod_1^s a_i^{P_i(n_i)} : n_1,\ldots,n_s\in\Neur\})$ has density zero, and in particular is not cofinite.
\end{proof}

\bibliographystyle{amsplain}

\bibliography{bibliography}

\end{document}